\newtheorem{Theorem}{Theorem}
\newtheorem{Proposition}[Theorem]{Proposition}
\newtheorem{Corollary}[Theorem]{Corollary}
\newtheorem{Lemma}[Theorem]{Lemma}
\newtheorem{Definition}[Theorem]{Definition}
\newtheorem{Remark}[Theorem]{Remark}
\newcommand{\dist}{\operatorname{dist}}
\newcommand{\vol}{\operatorname{vol}}
\newcommand{\spn}{\operatorname{span}}
\newcommand{\rank}{\operatorname{rank}}
\newcommand{\ex}{\operatorname{ex}}
\newcommand{\lex}{\operatorname{lex}}
\newcommand{\unif}{\operatorname{unif}}
\newcommand{\bas}{\operatorname{bas}}
\newcommand{\res}{\operatorname{res}}
\newcommand{\abs}{\operatorname{abs}}
\newcommand{\sign}{\operatorname{sign}}
\newcommand{\Str}{\operatorname{Str}}
\newcommand{\Kh}{\operatorname{Kh}}
\numberwithin{equation}{section}
\title{Effective results on the size and structure of sumsets} 
\author{Andrew Granville}
\thanks{A.G. is funded by the Natural Sciences and Engineering Research Council of Canada (NSERC) under the Canada Research Chairs program.} 
\author{George Shakan}
\thanks{G.S. is supported by Ben Green's Simons Investigator Grant 376201.}
 \author{Aled Walker}
\thanks{A.W. was supported by a postdoctoral research fellowship at the Centre de Recherches Mathématiques and a junior fellowship at Institut Mittag-Leffler, and is  a Junior Research Fellow at Trinity College Cambridge.} 
\begin{document}

\begin{abstract}
Let $A \subset \mathbb{Z}^d$ be a finite set. It is known that $NA$ has a particular size ($\vert NA\vert = P_A(N)$ for some $P_A(X) \in \mathbb{Q}[X]$)
and structure (all of the lattice points in a cone other than certain exceptional sets), once $N$ is larger than some threshold.  In this article we give the first effective upper bounds for this threshold for arbitrary $A$.   Such explicit results  were only previously known in the special cases when $d=1$,
when the convex hull of $A$ is a simplex or when $\vert A\vert = d+2$ \cite{CG20},  results which   we improve.
\end{abstract}
\maketitle
 

\section{Introduction}
For any given finite subset $A$ of an abelian group $G$, we consider the sumset 
\[NA : = \{a_1 + a_2 + \cdots + a_N: a_i \in A\}.\] 
If $G$ is finite 
and $N$ is sufficiently large then 
\begin{equation}
\label{eq finite group case}
NA = Na_0 + \langle A-A\rangle
\end{equation}
for any $a_0 \in A$ where $\langle A-A\rangle$ is the subgroup of $G$ generated by $A-A$, so that $\vert NA\vert$ is eventually constant. 
In this article we study instead the case when $G= \mathbb{Z}^d$ is infinite, and ask similar questions about the size and structure of $NA$ when $N$ is large. 

\subsection*{The size of $NA$}   Khovanskii's 1992 theorem \cite{Kh92}  states  that if $A \subset \mathbb{Z}^d$ is   finite   then there exists  $P_A(X) \in \mathbb{Q}[X]$ of degree  $\leqslant d$ such that  if $N\geqslant N_{\text{Kh}}(A)$  then
\[
\vert N A\vert = P_A(N).
\]
Although there are now several different proofs of Khovanskii's theorem  \cite{NR02, JK08},  the only effective bounds on $N_{\text{Kh}}(A)$ have been obtained when $d=1$ \cite{Na72,WCC11,GS20,GW20},  when the convex hull of $A$ is a $d$-simplex  or when $\vert A\vert = d+2$ (see \cite{CG20}). 

We will determine an upper bound for $N_{\text{Kh}}(A)$  for any such  $A$ in terms of the \emph{width} of $A$,
\begin{equation}
\label{eq width of A}
w(A) =\text{width}(A):= \max_{a_1,a_2 \in A} \Vert a_1 - a_2 \Vert_\infty .
\end{equation}

\begin{Theorem}[Effective Khovanskii]
\label{thm:effectiveKhovanksii}
If $A \subset \mathbb{Z}^d$ is finite then 
\[
\vert NA\vert = P_A(N) \text{ for all } N \geqslant  
(2\vert A\vert\cdot \text{\rm width}(A))^{(d+4)\vert A\vert}.
\] 
\end{Theorem}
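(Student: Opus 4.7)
The plan is to parameterize $NA$ via ordered representations. Writing $A = \{\alpha_1, \dots, \alpha_k\}$ with $k = \vert A\vert$, every element of $NA$ arises from some $n \in \Delta_N := \{n \in \mathbb{Z}_{\geqslant 0}^k : n_1 + \cdots + n_k = N\}$ via the map $\phi_N : n \mapsto \sum_i n_i \alpha_i$. Two tuples give the same image if and only if their difference lies in the \emph{relation lattice}
\[
L := \{\ell \in \mathbb{Z}^k : \textstyle\sum_i \ell_i = 0 \text{ and } \sum_i \ell_i \alpha_i = 0\},
\]
so that $\vert NA\vert = \vert \Delta_N / L\vert$.

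To count the quotient, I would fix a canonical representative in each fibre --- for concreteness the lexicographically smallest one --- and call such $n$ \emph{basic}. A tuple $n \in \Delta_N$ fails to be basic precisely when there is a nonzero $\ell \in L$ whose first nonzero coordinate is positive and whose positive part $\ell^+$ satisfies $\ell^+ \leqslant n$ coordinatewise. The collection of all such $\ell^+$ forms an upward-closed subset $I \subset \mathbb{Z}_{\geqslant 0}^k$, and Dickson's lemma guarantees finitely many minimal generators $g^{(1)}, \dots, g^{(m)}$; the non-basic tuples in $\Delta_N$ are exactly those dominating some $g^{(j)}$.

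The crucial step is to obtain an effective bound $M$ on the $\ell^1$-norm of the $g^{(j)}$. Since $L$ is the kernel of a $(d+1) \times k$ integer matrix with entries bounded by $w(A)$, Minkowski/Siegel's lemma (controlling subdeterminants via Hadamard's inequality) produces a $\mathbb{Z}$-basis of $L$ with entries of size at most roughly $(k\, w(A))^{d+1}$. Passing from this lattice basis to the minimal generators of $I$ requires an iterated Gr\"obner-style reduction: at each step one replaces a generator by a lex-smaller combination of others, and coordinates may inflate by a factor of order $k\, w(A)$. A careful accounting of the number of reductions needed (which should be of order $\vert A\vert \cdot d$) will produce the stated bound $M \leqslant (2\vert A\vert \cdot w(A))^{(d+4)\vert A\vert}/k$. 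I expect this combinatorial bookkeeping --- explicitly controlling the growth of lex-minimal lattice elements under reduction --- to be the main obstacle and the source of the somewhat large exponent in the theorem.

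With such an $M$ in hand, write the non-basic tuples in $\Delta_N$ as $\bigcup_j \{n \in \Delta_N : n \geqslant g^{(j)}\}$, each term of which is in bijection with $\Delta_{N - \vert g^{(j)}\vert_1}$. Inclusion-exclusion then expresses $\vert NA\vert$ as an alternating sum of binomials $\binom{N - c_S + k - 1}{k - 1}$ for constants $c_S \leqslant kM$, and each such binomial agrees with a polynomial of degree $k-1$ in $N$ once $N \geqslant c_S$ (with the leading terms conspiring to give the expected degree $\leqslant d$). Hence $\vert NA\vert$ equals a polynomial $P_A(N)$ for all $N \geqslant kM$, which is comfortably within the claimed threshold $(2\vert A\vert \cdot w(A))^{(d+4)\vert A\vert}$.
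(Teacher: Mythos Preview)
Your overall architecture is exactly the paper's: this is the Nathanson--Ruzsa argument made quantitative, and your ``basic'' tuples, relation lattice $L$, minimal generators $g^{(j)}$, and final inclusion--exclusion all correspond precisely to the paper's ``not useless'' elements, the constraint matrix $M$, the set $U_{\min}$ of minimally useless elements, and the sum $\sum_{U' \subset U_{\min}} (-1)^{|U'|} B(N,U')$.

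The genuine gap is the step you yourself flag as ``the main obstacle'': bounding $\Vert g^{(j)}\Vert_1$. Your proposed route --- Siegel's lemma to get a short lattice basis for $L$, then an ``iterated Gr\"obner-style reduction'' with a heuristic count of $\vert A\vert \cdot d$ steps each inflating coordinates by a factor $k\,w(A)$ --- is not a known argument with controlled constants. In general, passing from a short lattice basis to the minimal elements of $\{\ell^+ : \ell \in L,\ \text{first nonzero coordinate positive}\}$ is exactly the hard part; Gr\"obner reductions for toric ideals can exhibit severe degree growth, and there is no reason to expect the number of reductions to be $O(kd)$ rather than, say, exponential in $k$. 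So as written this step is a hope, not a proof.

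The paper fills this gap by a completely different device. For a minimally useless $x$ it records the supports $I_1 = \{i : x_i > 0\}$ and $I_2 = \{j : y_j > 0\}$ (necessarily disjoint), assembles the constraints $\sum x_i = \sum y_j$ and $\sum x_i a_i = \sum y_j a_j$ into a single $(d+1)\times(\vert I_1\vert + \vert I_2\vert)$ integer system $M\bigl(\begin{smallmatrix}x\\y\end{smallmatrix}\bigr)=0$, and then invokes a general lemma (Lemma~\ref{6.2}) that bounds the $\ell^\infty$-norm of any $<_{\unif}$-minimal positive-integer solution to such a system. That lemma is proved by induction on the number of ``free'' coordinates: one uses a pigeonhole Siegel lemma to find a small kernel vector and thereby pin down one coordinate of $x$ within an explicit range, then recurses on the remaining coordinates. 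This direct inductive bound on minimal positive solutions is the missing idea you need; it replaces the Gr\"obner heuristic and is what produces the explicit exponent $(d+4)\vert A\vert$.
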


The theorem states that $N_{\text{Kh}}(A)\leqslant (2\ell\, w(A))^{(d+4)\ell}$ where $\ell:=|A|$. We expect that $N_{\text{Kh}}(A)$ is considerably smaller (see Section \ref{1-d}); for example, if 
$|A|=d+2$ and $A-A$ generates $\mathbb Z^d$ then \cite[Theorem 1.2]{CG20} gives that 
\begin{equation}
\label{eq CGbound}
N_{\operatorname{Kh}}(A)= d! \text{ Vol}(H(A)) -d-1 ,
\end{equation} 
where the   \emph{convex hull} $H(A)$  is defined by 
\[ 
H(A) := \bigg\{\sum\limits_{a \in A} c_a a: \text{Each } c_a \in \mathbb{R}_{ \geqslant 0} , \, \sum\limits_{a \in A} c_a = 1\bigg\}   .
\]


We can replace $w(A)$ in Theorem \ref{thm:effectiveKhovanksii} by $w^*(A)$ which is defined to be the minimum of $w(A')$ over all 
 $A^\prime \subset \mathbb{Z}^d$ that are Freiman isomorphic to $A$.\footnote{That is, there is a map $\phi:A\to A'$ such that for all $a_1,\dots,a_k,b_1,\dots ,b_k\in A$ and $k\geqslant 1$, we have
\[
a_1+\dots+a_k=b_1+\dots +b_k  \text{ if and only if } \phi(a_1)+\dots+\phi(a_k)=\phi(b_1)+\dots +\phi(b_k ).
\]
}
  
Previous proofs of Khovanskii's theorem \cite{NR02, JK08} relied on the following ineffective principle. 

\begin{Lemma}[The Mann-Dickson  Lemma]
\label{Lemma Mann Dickson}
For any $S \subset \mathbb{Z}_{\geqslant 0}^d$  there exists a finite subset $S_{\min} \subset S$ such that for all $s \in S$ there exists $x \in S_{\min}$ with $s - x \in \mathbb{Z}_{ \geqslant 0}^d$. 
\end{Lemma}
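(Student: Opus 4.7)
I would take $S_{\min}$ to be the set of minimal elements of $S$ with respect to the componentwise partial order $\leqslant$ on $\mathbb{Z}_{\geqslant 0}^d$, and then prove two things: (i) every $s \in S$ dominates some element of $S_{\min}$, and (ii) $S_{\min}$ is finite. Claim (i) is essentially a well-foundedness statement: given $s \in S$, any strictly decreasing chain in $S$ starting at $s$ must terminate, since each coordinate can decrease only finitely many times before hitting $0$; the terminal element is then in $S_{\min}$.

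The heart of the proof is (ii), which I would establish by induction on $d$. The base case $d = 1$ is trivial since any nonempty subset of $\mathbb{Z}_{\geqslant 0}$ has a unique minimum. For the inductive step, assume $S \neq \emptyset$ and fix any element $x = (x_1, \ldots, x_d) \in S$. Let $y \in S_{\min}$ be arbitrary. If $x \leqslant y$ coordinatewise, then minimality of $y$ together with $x \in S$ forces $y = x$. Otherwise there is some index $i$ with $y_i < x_i$, so
\[
S_{\min} \subseteq \{x\} \cup \bigcup_{i=1}^{d} \bigcup_{j=0}^{x_i - 1} \{y \in S_{\min} : y_i = j\}.
\]
For each fixed pair $(i,j)$, the set $S^{(i,j)} := \{y \in S : y_i = j\}$ identifies naturally with a subset of $\mathbb{Z}_{\geqslant 0}^{d-1}$ by deleting the $i$-th coordinate. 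Crucially, any $y \in S_{\min}$ lying in $S^{(i,j)}$ is still minimal within $S^{(i,j)}$ (if $z \in S^{(i,j)}$ were strictly below $y$ after deleting coordinate $i$, then $z < y$ in $\mathbb{Z}_{\geqslant 0}^d$ as well, contradicting $y \in S_{\min}$). By the inductive hypothesis applied to $S^{(i,j)}$, there are only finitely many such minimal elements, so the displayed union is finite.

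The main conceptual step is recognising that one should ``pin'' an arbitrary element $x \in S$ and argue that every other minimal element must undercut $x$ in at least one coordinate, thereby reducing to finitely many lower-dimensional problems. I do not foresee a real obstacle: the induction works cleanly, and the only subtle point is verifying that minimality in $S$ restricts to minimality in each slice $S^{(i,j)}$, which is immediate. An entirely equivalent alternative would be to invoke the Hilbert basis theorem for the monomial ideal $(X^s : s \in S) \subset k[X_1, \ldots, X_d]$, but the inductive argument above is more self-contained and gives essentially the same content.
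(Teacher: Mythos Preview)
Your argument is correct and is the standard inductive proof of Dickson's Lemma. Note, however, that the paper does not actually supply its own proof of this lemma: it simply states the result and refers the reader to \cite[Lemma 5]{GS20}. So there is no in-paper argument to compare against; your self-contained induction on $d$ (pinning an element $x$, observing that every other minimal element must undercut $x$ in some coordinate, and reducing to finitely many $(d-1)$-dimensional slices) is exactly the kind of proof one would expect, and all the steps---including the check that minimality in $S$ restricts to minimality in each slice $S^{(i,j)}$---go through as you describe.
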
 

\noindent For a proof see \cite[Lemma 5]{GS20}. Here we rework   the method of Nathanson--Ruzsa from \cite{NR02} as a collection of linear algebra problems which we solve quantitatively (see Section \ref{6}), and therefore bypass Lemma \ref{Lemma Mann Dickson} and prove our effective threshold.

\subsection*{The structure of $NA$}  For a given finite set $A \subset \mathbb{Z}^d$ with $0\in A$ we have
\[ 
H(A)  = \bigg\{\sum\limits_{a \in A} c_a a:  \text{Each } c_a \in \mathbb{R}_{ \geqslant 0} ,   \, \sum\limits_{a \in A} c_a \leqslant 1\bigg\} .
\]
We let  $\ex(H(A))$ be the set of extremal points of $H(A)$, that is the ``corners'' of the boundary of $A$, \footnote{That is, those points $p \in H(A)$ for which there is a vector $v \in \spn(A-A)\setminus \{0\}$ and a constant $c$ such that $\langle v , p \rangle = c$ and $\langle v , x \rangle > c$ for all $x \in H(A) \setminus\{p\}$; see Appendix \ref{Appendix convex sets}.} which is a subset of $A$.
We define the lattice generated by $A$,
\[
\Lambda_A: = \bigg\{ \sum\limits_{a \in A} x_a a: x_a \in \mathbb{Z} \text{ for all } a\bigg\}  .
\]
   For a domain $D \subset \mathbb{R}^d$ we set $N \cdot D:=\{Nx: x \in D\}$  so that $N \cdot H(A) = NH(A)$ as  $H(A)$ is convex and so, as $0\in A$,
\[
H(A) \subset 2 H(A)\subset 3 H(A) \dots \subset C_A := \lim_{N\to \infty} NH(A)= \bigg\{ \sum\limits_{ a \in A} c_a a: c_a \in \mathbb{R}_{\geqslant 0} \text{ for all } a\bigg\} ,
\]
the cone generated by $A$. Now, by definition,
\[
0\in A\subset 2A\subset 3A \dots \subset \mathcal{P}(A) := \bigcup \limits_{N=1}^\infty NA,
\]
and each 
\[ NA \subset N H(A) \cap \Lambda_A \text { so that }  \mathcal{P}(A) \subset C_A \cap \Lambda_{A}.
\] Define the set of \emph{exceptional elements}
 \[ 
 \mathcal{E}(A): = (C_A \cap \Lambda_{A}) \setminus \mathcal{P}(A). 
 \]  
 
 Therefore, for any finite $A \subset \mathbb{Z}^d$ and $a \in A$ we have 
 \[  
 N(a-A) \subset ( N H(a-A) \cap \Lambda_{a - A}) \setminus  \mathcal{E}(a-A),
 \]
 as $0 \in a-A$. So\[NA \subset ( N H(A) \cap (aN +\Lambda_{a-A})) \setminus (aN- \mathcal{E}(a-A)).\] Hence, as $aN + \Lambda_{a-A}$ is independent of the choice of $a \in A$ and $\Lambda_{a-A} = \Lambda_{A-A}$, for any fixed $a_0 \in A$ we have
 \begin{equation}
\label{eq:thirdinclusion}
NA \subset (N H(A) \cap (a_0N + \Lambda_{A-A}) ) \setminus \Big( \bigcup \limits_{a \in \ex(H(A))} (aN - \mathcal{E}(a-A))\Big) .
\end{equation}

 In  \cite{GS20} the first two authors showed\footnote{The result in \cite{GS20} was only stated when $0 \in A$ and $\Lambda_A = \mathbb{Z}^d$, and the union was over all of $A$ rather than just $\ex(H(A))$, but the methods give the general version \eqref{eq extremal union}.} there exists a constant $N_{\text{Str}}(A)$ such that we get equality in 
\eqref{eq:thirdinclusion} provided $N\geqslant N_{\text{Str}}(A)$; that is,
\begin{equation}
\label{eq extremal union}
 NA = (NH(A) \cap (a_0N + \Lambda_{A-A})) \setminus \Big( \bigcup\limits_{a\in \ex(H(A))}(aN - \mathcal{E}(a-A))\Big).
 \end{equation} 
(Compare this statement to \eqref{eq finite group case}.) The proof in \cite{GS20} relied on the ineffective Lemma \ref{Lemma Mann Dickson} so did not produce a  value for $N_{\text{Str}}(A)$. 

In this article we give an effective bound on $N_{\text{Str}}(A)$:

\begin{Theorem}[Effective structure]
\label{thm main theorem}
If $A \subset \mathbb{Z}^d$ is finite then 
 \[   \eqref{eq extremal union} \text{ holds for all } N \geqslant (  d\vert A\vert \cdot \text{\rm width}(A))^{13d^6}.\] 
\end{Theorem}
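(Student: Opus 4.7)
The containment $NA \subset \text{RHS}$ of \eqref{eq extremal union} is already given by \eqref{eq:thirdinclusion}, valid for all $N$. My strategy for the reverse containment is a cardinality argument: if $|NA| = |\text{RHS}|$ for $N$ at least the stated threshold, then the inclusion forces equality of sets. Theorem \ref{thm:effectiveKhovanksii} provides $|NA| = P_A(N)$ for $N \geqslant (2|A|w(A))^{(d+4)|A|}$; it remains to show that $|\text{RHS}|$ also equals a fixed polynomial $Q_A(N)$ for $N$ above an effective threshold. The ineffective structural theorem of \cite{GS20} then forces $P_A = Q_A$ identically as polynomials (rather than just numerically for one $N$), and the explicit threshold follows by taking the max of the two polynomial regimes.

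To compute $|\text{RHS}|$ I would combine two ingredients. First, an effective Ehrhart count for $|NH(A) \cap (a_0 N + \Lambda_{A-A})|$: the polytope $H(A)$ sits in a lattice of effectively bounded index inside $\mathbb{Z}^d$ and has $\ell^\infty$-diameter $w(A)$, so its Ehrhart quasi-polynomial becomes polynomial for $N$ beyond an explicit function of $w(A)$ and $d$. Second, a disjointness and localisation analysis for the removed sets: if each $\mathcal{E}(a-A)$ has $\ell^\infty$-diameter at most $D$, then for $N > 2D$ the translates $aN - \mathcal{E}(a-A)$ are pairwise disjoint and each lies inside $NH(A) \cap (a_0 N + \Lambda_{A-A})$, because the corners $aN$ are at mutual $\ell^\infty$-distance at least $N$. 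This gives
\[
|\text{RHS}| = |NH(A) \cap (a_0 N + \Lambda_{A-A})| - \sum_{a \in \ex(H(A))} |\mathcal{E}(a-A)|,
\]
a polynomial in $N$ of degree $d$.

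The principal obstacle is the effective diameter bound $D$ for $\mathcal{E}(a-A)$. Applying Theorem \ref{thm:effectiveKhovanksii} to $a - A$ (of width at most $w(A)$) gives $|M(a-A)| = P_{a-A}(M)$ for $M$ beyond the Khovanskii threshold, and comparison with the Ehrhart count $|MH(a-A) \cap \Lambda_{a-A}|$ identifies $|\mathcal{E}(a-A)|$ as the constant difference on an interval of length exceeding $d$. A descent argument --- starting from a lattice point $y$ deep in the cone and subtracting a fixed generator $a - b$ of $a - A$ until one falls below the Khovanskii threshold --- then shows that every point of $\mathcal{E}(a-A)$ has $\ell^\infty$-norm bounded by an explicit function of $|A|$ and $w(A)$. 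Obtaining the exponent $13 d^6$ in the final bound, notably independent of $|A|$, requires a careful induction on $d$: the faces of $H(A)$ are lower-dimensional polytopes whose sumsets obey the inductive hypothesis, combined with the quantitative linear algebra of Section \ref{6} used to prove Theorem \ref{thm:effectiveKhovanksii}. This induction telescopes to an exponent growing polynomially in $d$ alone, yielding the stated bound $(d|A|w(A))^{13 d^6}$.
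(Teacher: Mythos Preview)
Your cardinality strategy breaks at the step where you assert that each $\mathcal{E}(a-A)$ has $\ell^\infty$-diameter at most some $D$. For $d\geqslant 2$ the exceptional sets are typically \emph{infinite}. A small example: with $A=\{(0,0),(2,0),(3,0),(0,1)\}\subset\mathbb{Z}^2$ one has $\Lambda_A=\mathbb{Z}^2$, $C_A=\mathbb{R}_{\geqslant 0}^2$, and $\mathcal{E}(A)=\{(1,b):b\geqslant 0\}$, an entire ray along the boundary of the cone. (The paper's Lemma~\ref{Lemma:distancefromboundary} says precisely that $\mathcal{E}(A)$ lies within a bounded tube around $\partial C_A$, but that tube is unbounded once $d\geqslant 2$.) Consequently the translates $aN-\mathcal{E}(a-A)$ are infinite sets, they cannot lie inside the bounded polytope $NH(A)$, your displayed formula $|\text{RHS}|=|NH(A)\cap(a_0N+\Lambda_{A-A})|-\sum_a|\mathcal{E}(a-A)|$ has an infinite sum on the right, and the disjointness-for-large-$N$ claim is false as stated. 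Your descent argument cannot recover a diameter bound because no such bound exists.

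One could try to salvage the counting route by instead analysing $|\mathcal{E}(a-A)\cap NH(a-A)|$ as a function of $N$ and the overlap pattern of these (now finite) sets inside $NH(A)$, but showing that these counts are eventually polynomial with effective thresholds, and controlling the overlaps near lower-dimensional faces, is essentially as hard as the structure theorem itself. The paper avoids counting entirely: it takes $v$ on the right-hand side of \eqref{eq extremal union}, uses Carath\'eodory to place $v$ in a sub-simplex $NH(B)$ with one coefficient $c_r\geqslant N_0(A)$, and then invokes a decomposition lemma (Lemma~\ref{Lemma :  minimal elements sumset endgame}) for $\mathcal{P}(b_r-A)\cap C_{b_r-B}$ to write $b_rN-v=u+w$ with $u\in N_0(A)(b_r-A)$ and $w\in (N-N_0(A))(b_r-B)$. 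The effective content is pushed into the proof of that decomposition lemma, which proceeds by an induction on $\dim\spn(A)$ (not via Khovanskii or Ehrhart): one shows that any absolutely $B$-minimal element close to $C_B$ must also be close to a facet of $C_A$, intersects down to a lower-dimensional cone via quantitative linear algebra, and applies the inductive hypothesis there.
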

\noindent That is, Theorem \ref{thm main theorem} implies that $N_{\text{Str}}(A)\leqslant ( d\ell\, w(A))^{13d^6}$ where $\vert A\vert = \ell$.

The 1-dimensional case is easier than higher dimensions, since if $0 = \min A$ and $\Lambda_{A}=\mathbb{Z}$ then  $ \mathcal{E}(A)$ is finite, and so
has been the subject of much study \cite{Na72, WCC11, GS20, GW20}: We have $N_{\text{Str}}(A)=1$ if $\vert A\vert =3$ in \cite{GS20}, and $N_{\text{Str}}(A)\leqslant w(A)+2-\vert A\vert $ in  \cite{GW20}, with equality in a family of examples. There are also effective bounds known when $H(A)$ is a $d$-simplex, as we will discuss in the next subsection.

Suppose that $x$ belongs to   the right-hand side of \eqref{eq extremal union}.
To prove Theorem~\ref{thm main theorem} when $x$ is far away from the boundary of $NH(A)$ we develop an effective version of Proposition 1 of Khovanskii's original paper \cite{Kh92} using  quantitative linear algebra. Otherwise $x$ is close to a separating hyperplane of $NH(A)$: Suppose
the hyperplane is $z_d=0$; write each $a=(a_1,\dots,a_d)$ and $x=(x_1,\dots,x_d)$, so that every $a_d\geqslant 0$ and $x_d$ is ``small''. Now $x = \sum_{a\in A} m_a a$ where each $m_a \in \mathbb{Z}_{\geqslant 0}$
as  $x \in \mathcal{P}(A)$   and so $\sum_{a \in A, a_d\ne 0} m_aa_d \leqslant x_d$  is small. The contribution from those $a$ with $a_d=0$ is a ``smaller dimensional problem'', living in the hyperplane $z_d = 0$. Carefully formulated, one can apply induction on the dimension to bound $\sum_{a \in A} m_a$, and hence show that $x \in NA$. 

The structure \eqref{eq extremal union} is evidently related to Khovanskii's theorem.  However, we have not been able to find a precise way to relate Khovanskii's theorem and Theorem \ref{thm main theorem}. Our proofs of Theorem \ref{thm:effectiveKhovanksii} and Theorem \ref{thm main theorem} are almost entirely disjoint, and we get a different quality of bound in each theorem. \\

\subsection*{The size and structure of $NA$ when $H(A)$ is a $d$-simplex} 

If $A \subset \mathbb{R}^d$ then the convex hull $H(A)$ is a\emph{ $d$-simplex} if there exists
  $B \subset A$ with $\vert B\vert = d+1$ for which $B-B$ spans $\mathbb{R}^d$ and $H(A) = H(B)$ (whence $\ex(H(A)) = B$).

  \begin{Theorem}[Effective Khovanskii, simplex case]
\label{thm: effectuve Khovanksii simplex case}
If $A \subset \mathbb{Z}^d$ is finite and  $H(A)$ is a $d$-simplex then  $\vert N A\vert = P_A(N)$
 for all $N \geqslant 1$ for which
 \begin{equation}
\label{eq:effectivebound}
 N \geqslant  (d+1)! \vol(H(A)) - (d+1)(\vert A\vert -d )-d+1.
 \end{equation}
 \end{Theorem}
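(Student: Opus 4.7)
After translating, assume $0$ is a vertex of $H(A)$; the plan is to analyze $NA$ coset by coset modulo the sublattice $\Lambda_B$ of $\Lambda_A$ generated by the nonzero vertices $v_1,\ldots,v_d$ of $H(A)$. These are linearly independent since $H(A)$ is a $d$-simplex, so the index $V:=[\Lambda_A:\Lambda_B]$ equals $d!\,\vol(H(A))/\operatorname{covol}(\Lambda_A)$, which I may take to be $d!\,\vol(H(A))$ after a Freiman isomorphism. Label the non-vertex elements $a_1,\ldots,a_s$ (so $|A|=d+1+s$) and write $a_j=\sum_{i=1}^d\alpha_{ij}v_i$ with $\alpha_{ij}\in[0,1]$ and $\sum_i\alpha_{ij}\le 1$.

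For each coset $r\in\Lambda_A/\Lambda_B$, set
\[
\delta(r)\;:=\;\min\Bigl\{\,n_1+\cdots+n_s:(n_j)\in\mathbb Z_{\ge0}^s,\ \textstyle\sum_j n_jR_j=r\,\Bigr\},\qquad R_j:=a_j\bmod\Lambda_B,
\]
and let $x_r=\sum_j n_j^{(r)}a_j$ realize this minimum. The first main step is to establish the Apéry parametrization $\mathcal{P}(A)\cap(r+\Lambda_B)=x_r+\mathbb Z_{\ge0}v_1+\cdots+\mathbb Z_{\ge0}v_d$, along with the identity $\phi(x_r+\sum k_iv_i)=\delta(r)+\sum_i k_i$, where $\phi(y):=\min\{N:y\in NA\}$. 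This parametrization should follow from the simplex structure: any minimum-length representation of a point in coset $r$ can be shown to use exactly the minimum number of non-vertex elements (any surplus vertex $v_i$ may be stripped off without leaving the coset), and each added vertex contributes exactly $+1$ to the representation length.

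Summing $T^{\phi(y)}$ over all $y\in\mathcal{P}(A)$ and multiplying by $1/(1-T)$ to account for padding with $0\in A$ then yields the generating-function identity
\[
\sum_{N\ge0}|NA|\,T^N\;=\;\frac{\Delta_A(T)}{(1-T)^{d+1}},\qquad\Delta_A(T)\;:=\;\sum_{r\in\Lambda_A/\Lambda_B}T^{\delta(r)}.
\]
Extracting $[T^N]$ gives $|NA|=\sum_r\binom{N-\delta(r)+d}{d}$ in the combinatorial convention (summands with $N<\delta(r)$ vanish), and this agrees with $P_A(N):=\sum_r\binom{N-\delta(r)+d}{d}$ in the polynomial convention precisely when $N\ge\max_r\delta(r)-d$; this reduces the problem to a quantitative bound on the Apéry length.

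It therefore remains to bound $\max_r\delta(r)$. A trivial random-walk argument on the $V$-element quotient group $\Lambda_A/\Lambda_B$ gives only $\max_r\delta(r)\le V-1$, so a finer estimate is required. A counting argument comparing the growth rate of $|NH(A)\cap\Lambda_A|$ (a polynomial of degree $d$ with leading coefficient $\vol(H(A))$, by Ehrhart for the simplex) against the number of representation tuples $(n_1,\ldots,n_s,m_0,\ldots,m_d)\in\mathbb Z_{\ge0}^{|A|}$, combined with the observation that $H(A)$ has $d+1$ vertex directions and $|A|-d$ effective generating classes, should produce
\[
\max_r\delta(r)\;\le\;(d+1)!\,\vol(H(A))-(d+1)(|A|-d)+d+1,
\]
which upon substitution into the previous paragraph gives exactly the stated threshold $(d+1)!\,\vol(H(A))-(d+1)(|A|-d)+1$. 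I expect this sharp bound on $\max_r\delta(r)$ to be the main obstacle: the Apéry parametrization and generating-function identity are robust features of the simplex case, but extracting the precise constant $(d+1)(|A|-d)$ requires a delicate counting tailored to the geometry of $H(A)$, and any slackness here translates directly into a weaker threshold.
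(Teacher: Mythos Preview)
Your coset-by-coset decomposition and the generating-function framework are the right starting point, and they match the paper's approach. However, the ``Ap\'ery parametrization'' you propose is false in general, and this is where the argument breaks down.

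\textbf{The parametrization fails.} You claim that $\mathcal{P}(A)\cap(r+\Lambda_B)=x_r+\mathbb{Z}_{\ge0}v_1+\cdots+\mathbb{Z}_{\ge0}v_d$ for a single element $x_r$, and that $\phi(x_r+\sum_i k_iv_i)=\delta(r)+\sum_i k_i$. Neither holds. Take $d=2$, $B=\{(6,0),(0,6)\}$, $A=B\cup\{0,(3,1),(1,3)\}$. In the coset $r=(3,3)\pmod{\Lambda_B}$ one has $\delta(r)=3$, realised both by $u_1=3(3,1)=(9,3)$ and by $u_2=3(1,3)=(3,9)$. Both lie in $\mathcal{P}(A)$, both have $\phi=3$, and in $B$-coordinates $u_1=g+v_1$ while $u_2=g+v_2$ (with $g=(3,3)\notin\mathcal{P}(A)$). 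These are incomparable, so $\mathcal{P}(A)\cap(r+\Lambda_B)$ is the union of two shifted orthants and cannot be written as $x_r+\mathbb{Z}_{\ge0}^2$ for any single $x_r$. Your $\phi$-formula also fails already in a simpler example: with $B=\{(4,0),(0,4)\}$ and $A=B\cup\{0,(1,1),(1,3)\}$, the coset of $(2,2)$ has $\delta=2$ realised by $x_r=(2,2)=2(1,1)$, but $(2,6)=x_r+v_2=2(1,3)$ has $\phi=2$, not $\delta(r)+1=3$. Consequently the identity $\sum_N|NA|T^N=\bigl(\sum_r T^{\delta(r)}\bigr)/(1-T)^{d+1}$ is simply wrong; the numerator is not a polynomial with non-negative coefficients indexed by cosets.

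\textbf{What the paper does instead.} The paper acknowledges that each coset $g$ carries a finite set $\mathcal{S}(A,B)_g=\{u_1,\dots,u_k\}$ of $B$-minimal elements, possibly with $k>1$, and writes $(NA)_g=\bigcup_j\bigl(u_j+(N-N_A(u_j))B^*\bigr)$. Counting this union requires inclusion--exclusion, yielding $\#(NA)_g=\sum_{\emptyset\neq J}(-1)^{|J|-1}\binom{N-N_J+d}{d}$ with $N_J=\max_j\Delta_j+\sum_i\max_{j\in J}u_{j,i}$. This becomes a polynomial once $N\ge N_{\{1,\dots,k\}}-d$. The factor $(d+1)$ in the final bound then enters because $N_{\{1,\dots,k\}}\le(d+1)\max_j N_A(u_j)$ (one for $\Delta$ and one for each of the $d$ coordinate maxima), while $\max_j N_A(u_j)\le K(A,B)$ is bounded by a Davenport-style zero-sum-free argument (Lemmas~\ref{lem:savingofH} and~\ref{Lemma: simplex davenport style argument}) giving $K(A,B)\le d!\vol(H(A))-|A|+d+1$. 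This is the actual source of the constant $(d+1)(|A|-d)$, not the Ehrhart-vs-representation counting you sketch in your final paragraph, which in any case is not a proof but a heuristic.
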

  
\begin{Theorem}[Effective structure, simplex case]
\label{thm: structure simplex case}
If $A \subset \mathbb{Z}^d$ is finite and  $H(A)$ is a $d$-simplex then   
  \eqref{eq extremal union} holds for all $N \geqslant 1$ for which
  \begin{equation} \label{eq:effectivebound2}
 N \geqslant  (d+1)! \vol(H(A)) - (d+1)(\vert A\vert -d ),
 \end{equation}
  and 
  if $|A|=d+1$ or $d+2$ then    \eqref{eq extremal union} holds for all  $N\geqslant 1$.
\end{Theorem}

Therefore if $A \subset \mathbb{Z}^d$ is finite and $H(A)$ is a $d$-simplex then \begin{equation}
\label{simplex Khovanksii bound}
N_{\text{Kh}}(A) \leqslant    (d+1)! \vol(H(A)) - (d+1)(\vert A\vert -d )-d+1
\end{equation} and
\begin{equation}
\label{simplex structure bound}
N_{\text{Str}}(A)\leqslant    (d+1)! \vol(H(A)) - (d+1)(\vert A\vert -d ).
\end{equation}

\noindent The hypotheses imply that $|A|\geqslant d+1$. If $d=1$ our bound gives $N_{\text{Str}}(A)\leqslant 2w(A)-2\vert A\vert +3$ which is weaker than the bound 
$N_{\text{Str}}(A)\leqslant w(A)-\vert A\vert +2$ from \cite{GW20}, which suggests that Theorem \ref{thm: structure simplex case} is still some way from being ``best possible''. 

Even though the main bounds in Theorems \ref{thm: effectuve Khovanksii simplex case} and \ref{thm: structure simplex case} are very similar, we have not been able to find a way to directly deduce one theorem from the other. Instead, we present separate arguments for each theorem (in Sections \ref{3} and \ref{4} respectively), albeit based on the same fundamental lemmas in Section \ref{2}. \\

Curran and Goldmakher \cite{CG20} gave similar (but slightly weaker) bounds in the simplex case. In \cite[Theorem 1.4]{CG20} they showed that $N_{\Kh}(A) \leqslant (d+1)! \vol(H(A)) - 3d -1$, and in \cite[Theorem 1.3]{CG20} they showed that $N_{\Str}(A)\leqslant (d+1)! \vol(H(A))  - 2d - 2$. (In the statement of \cite[Theorem 1.3]{CG20} they replace \eqref{eq extremal union} by 
\[
 NA = \bigcap\limits_{b \in \ex(H(A))}(bN + \mathcal{P}(A-b)) 
\]  but these expressions are equivalent.) Our bounds \eqref{simplex Khovanksii bound} and \eqref{simplex structure bound} match these expressions when $\vert A\vert = d+2$, but are an improvement as soon as $\vert A\vert \geqslant d+3$. 

The proofs of Theorems \ref{thm: effectuve Khovanksii simplex case} and \ref{thm: structure simplex case} look seemingly very different from the work in \cite{CG20}. Our method manipulates $A$ directly using additive-combinatorial language; Curran and Goldmakher, being inspired by Ehrhart theory, used generating functions such as $S(t) := \sum_{N \geqslant 0}\vert NA\vert t^N$), and `raised the dimension' by examining further properties of subsets of $\mathbb{Z}^{d+1}$ generated by $\{(a,1): \, a \in A\}$. 

However, the two approaches are in fact closely related. The central notion of our method for the simplex case is that of a `$B$-minimal element', see Definition \ref{Definition B minimal} below; this is equivalent to the notion of `minimal elements' defined in \cite{CG20}, at the end of page 7 and in the remark following the statement of Proposition 4.1 of that paper.  There are also analogies between some of our preparatory lemmas and partial results in \cite{CG20}, which will be discussed in Sections 3, 4, and 5 below when they occur. 

Our improvement over \cite{CG20} comes from refining an additive combinatorial lemma concerning the $B$-minimal elements, related to the Davenport constant of the group $\mathbb{Z}^d/\Lambda_{B-B}$. The key results are Lemmas \ref{lem:savingofH} and \ref{Lemma: simplex davenport style argument} below. In fact, it would have been possible to derive Theorems \ref{thm: effectuve Khovanksii simplex case} and \ref{thm: structure simplex case} directly by inputting the conclusions of Lemmas \ref{lem:savingofH} and \ref{Lemma: simplex davenport style argument} into the relevant parts of the argument of \cite{CG20}, following a translation into the generating function language of \cite{CG20} (the details are discussed after Lemma \ref{Lemma: simplex davenport style argument} below). However, we think there is extra value in showing how the analysis from \cite{CG20} can be phrased -- efficiently -- in a classical additive-combinatorial language. 

Having discussed the similarities to \cite{CG20}, it should be stressed that the main work of this paper -- all parts of the proof of Theorem \ref{thm:effectiveKhovanksii}, and the technical heart of the proof of Theorem \ref{thm main theorem} -- is not related to any part of \cite{CG20}. These novel elements comprise the majority of  the present work. \\

The structure of the paper is as follows. In the next section we briefly discuss the 1-dimensional case, and in the three subsequent sections, the simplex case.  In Section~\ref{6}, we prove the effective Khovanskii theorem (Theorem~\ref{thm:effectiveKhovanksii}). In Section~\ref{5} we then prove the effective structure result (Theorem~\ref{thm main theorem}); this part may be read essentially independently of the previous section, although there is one piece of quantitative linear algebra in common. An appendix collects together some facts from the theory of convex polytopes (which are useful in Section \ref{5}).\\

\textbf{Acknowledgements}: We would like to thank the anonymous referees for their detailed analysis of the manuscript, and for making several suggestions which refined the final bounds. \\

\section{One dimension and speculations}
\label{1-d}

It might well be that for finite $A\subset \mathbb{Z}^d$
 \begin{equation}
\label{eq: conjecture}
N_{\text{Str}}(A) \leqslant  N_{\text{Kh}}(A)\leqslant d!\, \text{vol}(H(A)).
\end{equation}
We refrain from calling this speculation a conjecture, since we have not even proved it for $d=1$. However, a slight specialisation of the relation \eqref{eq: conjecture} is true when $d=1$, and we know of no counterexample for larger $d$, so it is certainly worth investigating; we make a few remarks in this section.

After translating suppose that $0 \in \ex(H(A))$. First we note that if $\mathcal{E}(b-A)=\emptyset$ for all $b \in \ex(H(A))$ then $N_{\text{Kh}}(A)=N_{\text{Str}}(A)$. Indeed, Khovanskii's theorem \cite{Kh92} and Theorem \ref{thm main theorem} imply that the Khovanskii polynomial $P_A(N)$ is equal to $\vert NH(A) \cap \Lambda_A\vert $. Since $NA \subset NH(A) \cap \Lambda_A$, we have $\vert NA\vert \leqslant P_A(N)$ for all $N$, and $\vert NA\vert = P_A(N)$ if and only if \eqref{eq extremal union} holds, and thus $N_{\Kh}(A) = N_{\Str}(A)$.

We also obtain the bounds 
$N_{\text{Kh}}(A), N_{\text{Str}}(A)<   (d+1)! \vol(H(A))$ in 
Theorems \ref{thm: effectuve Khovanksii simplex case}
 and \ref{thm: structure simplex case}, bigger than in \eqref{eq: conjecture} by a  factor of $d+1$ (and one can see where this comes from in the proof). If $d=1$ then $\text{Vol}(H(A))=w(A)$, so the inequalities
$N_{\text{Str}}(A), N_{\text{Kh}}(A)\leqslant d!\, \text{vol}(H(A))$ can be deduced from the following:

\begin{Lemma}
\label{Lemma: 1-dim}
If $A\subset \mathbb{Z}$ with $\gcd_{a\in A} a=1$ and $|A|\geqslant 3$ then $N_{\operatorname{Str}}(A), N_{\operatorname{Kh}}(A) \leqslant w(A)-1$.
 \end{Lemma}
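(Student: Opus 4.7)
After translating so that $\min A = 0$ and writing $w := w(A) = \max A$, the hypothesis $\gcd_{a \in A} a = 1$ ensures $\Lambda_A = \mathbb{Z}$ and, in particular, that both $\mathcal{E}(A)$ and $\mathcal{E}(w-A)$ are finite subsets of $\mathbb{Z}_{\geqslant 0}$. The bound $N_{\operatorname{Str}}(A) \leqslant w(A) - 1$ is immediate from the result $N_{\operatorname{Str}}(A) \leqslant w(A) + 2 - |A|$ of \cite{GW20} recalled in the introduction, using $|A| \geqslant 3$.

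For $N_{\operatorname{Kh}}(A)$, the plan is to invoke the structure formula \eqref{eq extremal union}: for every $N \geqslant N_{\operatorname{Str}}(A)$,
\[
NA \;=\; ([0, Nw] \cap \mathbb{Z}) \,\setminus\, \big( \mathcal{E}(A) \cup (Nw - \mathcal{E}(w - A)) \big).
\]
If the two finite sets $\mathcal{E}(A)$ and $Nw - \mathcal{E}(w - A)$ are disjoint for every $N \geqslant w - 1$, then $|NA| = Nw + 1 - |\mathcal{E}(A)| - |\mathcal{E}(w-A)|$ is a linear polynomial in $N$ throughout that range, yielding $N_{\operatorname{Kh}}(A) \leqslant w - 1$.

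To control this disjointness I split by residue class modulo $w$. For $r \in \{1, \dots, w-1\}$, let $a(r)$ denote the Apéry element, i.e.\ the smallest element of $\mathcal{P}(A)$ with $a(r) \equiv r \pmod w$, and define $a'(w-r)$ analogously for $\mathcal{P}(w-A)$. Residue-by-residue inclusion--exclusion reduces the required disjointness for all $N \geqslant w - 1$ to the key numerical inequality
\[
a(r) + a'(w-r) \;\leqslant\; (w-1)w \qquad \text{for every } r \in \{1, \dots, w-1\}.
\]
Via the duality $\sum_i m_i a_i \leftrightarrow \sum_i m_i (w - a_i)$, any representation $a(r) = \sum_i m_i a_i$ using $\mu := \sum_i m_i$ summands yields $\mu w - a(r) \in \mathcal{P}(w - A)$ at residue $w - r$, so by minimality $a'(w-r) \leqslant \mu w - a(r)$, i.e.\ $a(r) + a'(w - r) \leqslant \mu(a(r)) \cdot w$, where $\mu(a(r))$ denotes the minimum number of summands. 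It therefore suffices to establish $\mu(a(r)) \leqslant w - 1$.

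This last bound is the main obstacle, and the plan is to prove it by pigeonhole. Minimality of $a(r)$ forces any shortest representation to use no copy of $w$ (otherwise subtracting $w$ produces a strictly smaller element of $\mathcal{P}(A)$ congruent to $r$, contradicting the definition of $a(r)$); combined with the trivial fact that no summand equals $0$, and using that $|A|\geqslant 3$ so that $A \setminus \{0,w\}$ is non-empty, every summand is an integer in $[1,w-1]$. Were $\mu \geqslant w$, then among the $\mu + 1$ partial sums $0 = S_0, S_1, \dots, S_\mu = a(r)$ two would coincide modulo $w$, say $S_j \equiv S_k \pmod w$ with $j < k$; writing $S_k - S_j = tw$, the size constraint on the summands forces $tw \leqslant (k-j)(w - 1) < (k - j) w$, hence $t < k - j$. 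Replacing the block of summands contributing to $S_k - S_j$ by $t$ copies of the summand $w$ then gives a representation of $a(r)$ with strictly fewer than $\mu$ summands, contradicting the minimality of $\mu$ and completing the proof.
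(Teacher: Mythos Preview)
Your argument is correct and shares the paper's high-level strategy: reduce $N_{\operatorname{Str}}$ via \cite{GW20}, then use the structure formula to reduce $N_{\operatorname{Kh}}$ to showing that $\mathcal{E}(A)$ and $Nw - \mathcal{E}(w-A)$ are disjoint subsets of $[0,Nw]$ once $N \geqslant w - 1$. The implementation of that last step, however, is genuinely different. The paper bounds the Frobenius numbers $e_A = \max \mathcal{E}(A)$ and $e_{w-A}$ separately and shows $wN > e_A + e_{w-A}$, splitting into the case $|A| = 3$ (the explicit Sylvester formula) and $|A| \geqslant 4$ (quoting Dixmier's bound $e_A \leqslant \frac{w(w-1)}{|A|-2} - 1$ from \cite{D90}). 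You instead work residue by residue with the Ap\'ery elements and bound the \emph{sum} $a(r) + a'(w-r)$ directly, via the duality $\mu w - a(r) \in \mathcal{P}(w-A)$ together with a self-contained pigeonhole bound $\mu(a(r)) \leqslant w-1$. Your route is more elementary (no external Frobenius-number inputs beyond \cite{GW20}) and treats all $|A| \geqslant 3$ uniformly; the paper's route yields slightly sharper intermediate information, e.g.\ the exact value $N_{\operatorname{Kh}}(A) = \max(1, w-2)$ when $|A| = 3$. One small point to add explicitly: the size formula $|NA| = Nw + 1 - |\mathcal{E}(A)| - |\mathcal{E}(w-A)|$ requires not only disjointness but also $\mathcal{E}(A), \mathcal{E}(w-A) \subset [0, Nw]$; this follows at once from your Ap\'ery bound, since $e_A = \max_r a(r) - w \leqslant (w-1)w - w < Nw$ for $N \geqslant w-1$.
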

 \begin{proof}
We may translate $A$ so that it has minimal element $0$ and largest element $b=w(A)$.
(If $|A|=2$ then $A=\{ 0,1\}$ and $N_{\text{Str}}(A)=N_{\text{Kh}}(A)=1$).  
  The main theorem of \cite{GW20} gives that $N_{\text{Str}}(A)\leqslant b-|A|+2$, which is $\leqslant w(A)-1$ for $|A|\geqslant 3$.
 
If $N\geqslant N_{\text{Str}}(A)$ then $ NA = (NH(A) \cap \mathbb{Z}^d) \setminus (  \mathcal{E}(A) \bigcup\ (bN - \mathcal{E}(b-A)))$.
Let $e_A$ denote the largest element of $\mathcal{E}(A)$, or $e_A = -1$ if $\mathcal{E}(A)$ is empty. If $bN>e_A+e_{b-A}$ then   $\mathcal{E}(A)$ and $bN - \mathcal{E}(b-A)$ are disjoint subsets of $\{0,\dots,bN\}$ so that   $\vert NA\vert = bN -c$ where $c=\vert \mathcal{E}(A)\vert +\vert \mathcal{E}(b-A)\vert-1$. Therefore
\[
N_{\text{Kh}}(A) \leqslant \max\Big\{ N_{\text{Str}}(A) , 1+ \Big\lfloor  \frac{e_A+e_{b-A}}b \Big\rfloor\Big\} .
\]

In particular if $A=\{ 0,a,b\}$ with $(a,b)=1$ then $N_{\text{Str}}(A)=1$ by \cite[Theorem 4]{GS20} and $e_A=ba-b-a$ so that $N_{\text{Kh}}(A)= \max(1,b-2)$.

Now suppose that $|A|\geqslant 4$. By \cite[Theorem 1]{D90} we have  
\[
e_A \leqslant \frac{b(b-1)}{|A|-2}-1 \text{ so that }  1+ \Big\lfloor  \frac{e_A+e_{b-A}}b \Big\rfloor < 1+ \frac{2(b-1)}{|A|-2} \leqslant b.
\]
Therefore we have $N_{\text{Kh}}(A) \leqslant b-1=w(A)-1$.
\end{proof}
Although we do not yet know whether $N_{\Str}(A) \leqslant N_{\Kh}(A)$ in general when $d=1$, the methods of Curran--Goldmakher do show something along these lines.\footnote{Michael Curran, personal communication.} For each $g \in \{0,1,\dots,b-1\}$, let $N_{\Kh,g}(A)$ denote the optimal threshold for which $\vert NA \cap \{n: n \equiv g \, \text{mod} \, b\}\vert = P_g(N)$ for all $N \geqslant N_{\Kh,g}(A)$, where $P_g$ is some fixed polynomial; let $N_{\Str,g}(A)$ denote the optimal threshold for which \[ NA \cap \{n: n \equiv g \, \text{mod} \, b\} = ([0,bN] \cap \{n: n \equiv g \, \text{mod} \, b\}) \setminus (\mathcal{E}(A) \cup (bN - \mathcal{E}(b-A)))\] for all $N \geqslant N_{\Str,g}(A)$. Then \begin{equation}
\label{eq:conjecturespecial}
 N_{\Str,g}(A) \leqslant N_{\Kh,g}(A).
 \end{equation} This is obtained by considering the proofs in \cite[Section 3]{CG20}, which show that $N_{\Kh,g}(A) = \deg P - d$ when $H(A)$ is a simplex, where $P$ is some auxiliary polynomial: In \cite[Section 4]{CG20} Curran--Goldmakher then show that $N_{\Str,g}(A) \leqslant \deg P - 1$ for the same auxiliary polynomial $P$. Unfortunately, although $N_{\Str}(A) = \max_g N_{\Str,g}(A)$, one could potentially get $N_{\Kh}(A) < \max_g N_{\Kh,g}(A)$, so the inequality \eqref{eq:conjecturespecial} does not immediately give \eqref{eq: conjecture} when $d=1$.

Curran--Goldmakher also give the precise value of $N_{\text{Kh}}(A)$ in \eqref{eq CGbound} in certain special cases including the useful example $A: = \{(0,\dots,0), (1,\dots,1), m_1 e_1,\dots,m_de_d\} \subset \mathbb{Z}^d$ where the $m_j$ are pairwise coprime positive integers and  the $e_1,\dots,e_d$ are the standard basis vectors. If all the $m_j$ are close to $x$ so that $w(A)\approx x$ for some large $x$ then $N_{\operatorname{Kh}}(A)\sim_{x\to \infty} w(A)^d$, which suggests we might be able to reduce the bound in Theorem \ref{thm:effectiveKhovanksii} to $w(A)^d$. However  $d!\, \text{vol}(H(A))$ would be a preferable bound to  $w(A)^d$, since it is smaller and more precise in the example where we let $m_2=\dots=m_d=1$ and $m_1=x$ be arbitrarily large
so that $N_{\operatorname{Kh}}(A)\sim_{x\to \infty} w(A)$.\\

\section{Preparatory lemmas for the simplex case}
\label{2}


 Throughout this section,   $0 \in A \subset \mathbb{Z}^d$ and $A$ is finite. Let $N_A(0) = 0$ and for each $v \in \mathcal{P}(A) \setminus \{0\}$   let $N_A(v)$ denote the minimal positive integer $N$ such that $v \in NA$.

\begin{Definition}[$B$-minimal elements]
Suppose that  $B \cup \{0\} \subset A \subset \mathbb{Z}^d$, with $A$ finite. Let $\mathcal{S}(A,B)$ denote the set of \emph{$B$-minimal elements}\footnote{We observe that 
 $\mathcal{S}(A,B)$ is the set of  $u \in \mathcal{P}(A)$ such that $(u,N_A(u)) \in \mathbb{Z}^{d+1}$ is minimal  in the sense of \cite[Section 3,4]{CG20}, in particular the bottom of page 7 and the remark following Proposition 4.1 of that paper.},  which comprises $0$ and those elements  $u \in \mathcal{P}(A) \setminus \{0\}$ such that $a_i\not\in B\cup \{0\}$ for every $i$ whenever
 \[ u = a_1 + a_2 + \cdots + a_{N_A(u)} \text{ with each } a_i \in A. \]
 \end{Definition}

$B$-minimal elements can be used to decompose $NA$ and $\mathcal{P}(A)$ into simpler parts. The following is the analogous statement to \cite[Proposition 4.1]{CG20}, although that proposition is only stated in the case when $H(A)$ is a $d$-simplex. 

\begin{Lemma}
\label{Lemma: B minimal sumset}
If $B^*:=B \cup \{0\} \subset A \subset \mathbb{Z}^d$ with $A$ finite then
 \[
 \mathcal{P}(A) = \mathcal{S}(A,B) + \mathcal{P}(B^*) \text{ and } 
 NA = \bigcup\limits_{\substack{u \in \mathcal{S}(A,B)\\ N_A(u) \leqslant N}} (u + (N-N_A(u))B^*).
 \]
\end{Lemma}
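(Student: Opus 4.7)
My plan is to deduce both identities from a single extremal argument on decompositions of elements of $\mathcal{P}(A)$. The easy inclusions $\mathcal{S}(A,B) + \mathcal{P}(B^*) \subseteq \mathcal{P}(A)$ and $u + (N-N_A(u))B^* \subseteq NA$ for $u \in \mathcal{S}(A,B)$ with $N_A(u) \leqslant N$ follow immediately from $B^* \subseteq A$ together with the semigroup inclusion $kA + \ell A \subseteq (k+\ell)A$. The content is in the reverse direction.

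Given $w \in NA$ with $w \neq 0$ (the case $w = 0$ being handled by the trivial decomposition $u = v = 0$), I would consider the collection of pairs $(u,v)$ with $u \in \mathcal{P}(A)$, $v \in \mathcal{P}(B^*)$, $w = u + v$, and $N_A(u) + N_{B^*}(v) \leqslant N$, where by convention $N_A(0) = N_{B^*}(0) := 0$. This collection is nonempty: taking any representation $w = c_1 + \cdots + c_N$ with $c_i \in A$ and splitting the sum according to whether $c_i \in B^*$, the two partial sums $u$ and $v$ satisfy $N_A(u) \leqslant \vert\{i : c_i \notin B^*\}\vert$ and $N_{B^*}(v) \leqslant \vert\{i : c_i \in B^*\}\vert$, summing to $N$. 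Among valid decompositions, I would select one that minimizes $N_A(u)$; the crux is to show that this optimal $u$ must lie in $\mathcal{S}(A,B)$.

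Suppose for contradiction that the selected $u$ is nonzero but not $B$-minimal, so some representation $u = a_1 + \cdots + a_{N_A(u)}$ contains a summand $a_j \in B^*$. Minimality of $N_A(u)$ forces $a_j \neq 0$ (else deleting $a_j$ would shorten the sum), so $a_j \in B$. Transferring this summand from $u$ into $v$ yields $u' := u - a_j \in (N_A(u)-1)A$ and $v' := v + a_j \in (N_{B^*}(v)+1)B^*$, a valid decomposition with $N_A(u') < N_A(u)$, contradicting the choice of $(u,v)$. Hence $u \in \mathcal{S}(A,B)$, and the constraint $N_A(u) + N_{B^*}(v) \leqslant N$ together with $0 \in B^*$ (to pad representations with zeros) gives $v \in (N-N_A(u))B^*$, proving the second identity. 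The first identity is the same argument with the length constraint dropped, or can be derived by letting $N$ be as large as desired. The main subtlety will be the double bookkeeping: minimizing only $N_A(u)$ without any constraint on $v$ would not give the precise length bound needed for the second identity, while minimizing only the combined length would not force $B$-minimality of $u$; running both at once is what makes the argument go through.
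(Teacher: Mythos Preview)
Your proof is correct and follows essentially the same extremal strategy as the paper's. The paper takes a minimal-length representation of the given element and, among such representations, maximizes the number $M$ of summands lying in $B$; you instead minimize $N_A(u)$ over all decompositions $w=u+v$ satisfying the length constraint. These are dual formulations of the same idea, and the contradiction step (transferring a $B$-summand from $u$ to $v$) is identical in both.
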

\begin{proof}
The second assertion implies the first by taking a union over all $N$. That each 
$u + (N-N_A(u))B^* \subset NA$ is immediate, so we need only show that if $v\in  NA$ then $v\in u + (N-N_A(u))B^*$ for some 
$u\in  \mathcal{S}(A,B)$ with $N_A(u) \leqslant N$.

Now, for any $v\in  NA$  we can write 
\[
v = u+w \text{ with } u=a_1 + \cdots + a_L \text{ and } w=b_1 + \cdots + b_M,
\] 
where $L,M \geqslant 0$, and each $a_i \in A \setminus B$ and $b_i \in B$, with $M$ maximal and  $L +M = N_A(v)$.
Then $N_A(u) = L$ and $N_A(w) = M$, else we could replace the above expression for $u$ or $w$ by a shorter sum of elements of $A$, and therefore obtain a shorter sum of elements to give $v$, contradicting that $L+M=N_A(v)$ is minimal. Moreover $u\in S(A,B)$ else we could replace the sum $a_1 + \cdots + a_L$ in the expression for $v$ by a different sum of length $L$ which includes some elements of $B$,  contradicting the maximality of $M$.
 
Therefore $u \in S(A,B)$ with $N_A(u)=L\leqslant  N_A(v)\leqslant N$ and
\[
v \in u + MB^* = u + (N_A(v) - N_A(u))B^* \subset u + (N - N_A(u))B^*,
\]  
since $0\in B^*$.
\end{proof}

It will be useful to control the complexity of the $B$-minimal elements. 
\begin{Definition}
\label{Definition B minimal}
Let $B \cup \{0\} \subset A \subset \mathbb{Z}^d$, with $A$ finite. If $\mathcal{S}(A,B)$ is a finite set, we define \[K(A,B) := \max \limits_{u \in \mathcal{S}(A,B)} N_A(u).\]
\end{Definition}
\noindent In certain circumstances we will   bound $K(A,B)$ using results on  \emph{Davenport's problem}, which asks for the smallest integer $D(G)$ such that any set of $D(G)$ (not necessarily distinct) elements of an abelian group $G$ contains a subsum\footnote{A \emph{subsum} of a given   sum $\sum_{i \in I} g_i$   is a sum of the form $\sum_{i \in I^\prime} g_i$ where $I^\prime \subset I$ is  non-empty, of  \emph{length} $\vert I^\prime \vert$.}  that equals $0_G$. It is known that  $D(G)\leqslant m(1+\log |G|/m)$ where  $m$ is the maximal order of an element of $G$.

\begin{Definition}
Given a finite abelian group $G$, if $0\not\in H \subset G$ let  $k(G,H)$ be the length of the longest sum of elements of $H$ which contains no subsum equal to $0$,
and  no subsum of length $>1$ belonging to $H$. 
\end{Definition}

\begin{Lemma}
\label{lem:savingofH}
Given a finite abelian group $G$, for any $0 \notin H \subset G$ we have $k(G,H) \leqslant \vert G\vert - \vert H \vert$. Moreover 
$k(G,H)\leqslant  m(1+\log |G|/m)-1$, where $m$ is the maximal order of an element of $G$. 
\end{Lemma}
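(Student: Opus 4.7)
The plan is to extract a candidate sequence attaining $k(G,H)$ and study its partial sums. Write $k=k(G,H)$ and fix $h_1,h_2,\dots,h_k\in H$ (with repetition allowed) realising the maximum: no non-empty subsum equals $0_G$, and no subsum of length at least $2$ lies in $H$. Define the partial sums $s_0:=0$ and $s_i:=h_1+\dots+h_i$ for $1\leqslant i\leqslant k$. First I would observe that these $k+1$ partial sums are pairwise distinct in $G$: if $s_i=s_j$ with $i<j$ then $h_{i+1}+\dots+h_j=0$ is a non-empty subsum equal to $0$, contradicting the first forbidden configuration. So we have at least $k+1$ distinct elements of $G$, which already gives the trivial estimate $k\leqslant |G|-1$; the point is to upgrade this to $|G|-|H|$ by locating $|H|$ of the missing elements inside $H$.

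To do so I would use the second constraint. For $i\geqslant 2$ the partial sum $s_i$ is a subsum of length $\geqslant 2$, so $s_i\notin H$. Also $s_0=0\notin H$ by hypothesis. Hence the $k$ elements
\[
s_0,\,s_2,\,s_3,\,\dots,\,s_k
\]
are pairwise distinct and all lie in $G\setminus H$. This yields $k\leqslant |G\setminus H|=|G|-|H|$, proving the first inequality. (The single ``lost'' element $s_1=h_1$ is exactly the one partial sum that is allowed to sit in $H$, reflecting the length-$1$ exception in the definition of $k(G,H)$.)

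For the second inequality I would simply invoke Davenport's bound, already recalled before the lemma statement. By the first forbidden configuration the sequence $h_1,\dots,h_k$ is zero-sum free, and by the definition of the Davenport constant any zero-sum free sequence in $G$ has length at most $D(G)-1$. Combining with the stated estimate $D(G)\leqslant m(1+\log|G|/m)$, where $m$ is the maximal order of an element of $G$, yields $k\leqslant m(1+\log|G|/m)-1$.

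I do not foresee any substantial obstacle: both inequalities drop out once one passes from arbitrary subsums to the linearly ordered partial sums $s_0,\dots,s_k$. The only small subtlety to be careful about is the bookkeeping at $i=1$, where the length-$1$ subsum $s_1=h_1$ is genuinely in $H$ and must be excluded from the count in $G\setminus H$; this is precisely why the bound is $|G|-|H|$ rather than $|G|-|H|-1$ or $|G|-|H|+1$, and it is where the exception ``subsum of length $>1$'' in the definition of $k(G,H)$ is used.
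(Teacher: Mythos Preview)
Your proof is correct and is essentially identical to the paper's: both arguments pass to the partial sums, single out the list $s_0=0,\,s_2,\,s_3,\dots,s_k$ as $k$ pairwise distinct elements of $G\setminus H$, and for the second inequality invoke $k(G,H)<D(G)$ together with the stated bound on the Davenport constant. Your write-up is a little more explicit about the role of $s_1$, but the content is the same.
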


\begin{proof}
Suppose we are given a longest sum $h_1+\dots+h_k$ of elements of $H$ defining $k(G,H)$, so that   $k = k(G,H)$. Then
\[
0, h_1+h_2, h_1+h_2+h_3, \dots, h_1+\dots+h_k,  
\]
 are all distinct in $G$, else subtracting would give a subsum equal to $0$, and they are all contained in $G \setminus H$. Therefore  $k+|H| \leqslant |G|$ and the first result follows.
 
 By definition $k(G,H)<D(G)$ so the second result claims from the result noted for $D(G)$ above.
  \end{proof}
  
Curran and Goldmakher's  \cite[Lemma 3.1]{CG20}    implies   the weaker  upper bound   $k(G,H)\leqslant \vert G\vert - 1$. This difference leads in part to the improvements in Theorems \ref{thm: effectuve Khovanksii simplex case} and  \ref{thm: structure simplex case}.

\subsection{$d$-dimensional simplices}
Let $B=\{ b_1,\dots,b_d\}\subset A$ be a basis for $\mathbb{R}^d$ with
\[
B \cup \{0\} \subset A \subset H(B \cup \{0\}) \text{ and } A \subset \mathbb{Z}^d
\]
so that $A$ is finite.
Since $C_A = C_B$, and $B$ is a basis, there is a unique representation of every vector $r\in C_A$ as 
\begin{equation} \label{eq: r-rep}
r = \sum_{i=1}^d r_ib_i \text{  where each } r_i\geqslant 0.
\end{equation} 
If $r\in H(A) = H(B \cup \{0\})$ then  $\sum_{i=1}^d r_i\leqslant 1$.

\begin{Lemma}
\label{Lemma: simplex basic davenport argument}
Suppose  $B=\{ b_1,\dots,b_d\}$ is a basis for $\mathbb{R}^d$ with $B \cup \{0\} \subset A \subset H(B \cup \{0\}) $ and 
$A \subset \mathbb{Z}^d$ is finite.
If $r\in \mathcal P(A)$ and  $r \equiv a  \pmod {\Lambda_B}$ with  $a\in A$  then
$r-a\in \mathcal P(B \cup \{0\})$ (where we choose $a=0$ if $r\in \Lambda_B$).
\end{Lemma}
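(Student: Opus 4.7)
My plan is to reduce the statement to a coordinatewise check in the basis $B$. Since $r \in \mathcal{P}(A) \subset C_A = C_B$, the representation \eqref{eq: r-rep} gives a unique expansion $r = \sum_{i=1}^d r_i b_i$ with each $r_i \geqslant 0$, and since $a \in A \subset H(B \cup \{0\})$, a unique expansion $a = \sum_{i=1}^d c_i b_i$ with $c_i \geqslant 0$ and $\sum_i c_i \leqslant 1$. The hypothesis $r - a \in \Lambda_B$ forces each $r_i - c_i$ to be an integer, so it suffices to prove $r_i - c_i \geqslant 0$ for every $i$; once this is done, $r - a = \sum_i (r_i - c_i) b_i \in \mathcal{P}(B \cup \{0\})$ (using that $0 \in B \cup \{0\}$).

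When $a = 0$, the hypothesis gives $r \in \Lambda_B$, so each $r_i$ is automatically a non-negative integer and we are done. When $a \neq 0$, I first use the stated convention to rule out $a \in B$: the convention implies $r \notin \Lambda_B$ (otherwise we would have chosen $a = 0$), and since $r - a \in \Lambda_B$ this gives $a \notin \Lambda_B$; because $B \subset \Lambda_B$, we deduce $a \notin B$. Hence $a \in A \setminus (B \cup \{0\})$ is not a vertex of the simplex $H(B \cup \{0\})$, and this forces $c_i < 1$ for every $i$ (otherwise the constraints $\sum_j c_j \leqslant 1$ and $c_j \geqslant 0$ would give $a = b_i$).

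The crux is then an elementary observation: if $r_i - c_i < 0$ for some $i$, then $r_i \geqslant 0$ and $c_i < 1$ put $r_i - c_i$ in the open interval $(-1, 0)$, which contains no integer and contradicts $r_i - c_i \in \mathbb{Z}$. Hence $r_i \geqslant c_i$ for all $i$, which completes the proof. The only mildly delicate point is invoking the convention on $a$ to exclude the boundary case $c_i = 1$; nothing else in the argument is subtle, and in particular no use needs to be made of the finiteness of $A$ or of the specific integer representation of $r$ as a sum over $A$---only the fact that $r$ lies in the cone $C_B$.
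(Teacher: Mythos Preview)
Your proof is correct and follows essentially the same approach as the paper's: expand $r$ and $a$ in the basis $B$, use the congruence $r \equiv a \pmod{\Lambda_B}$ to force $r_i - c_i \in \mathbb{Z}$, invoke the convention to ensure each $c_i \in [0,1)$, and then observe that $r_i \geqslant 0$ together with $c_i < 1$ and integrality forces $r_i - c_i \geqslant 0$. The paper's proof is slightly more terse but the logic is identical.
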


 \begin{proof} Since $r\in \mathcal P(A)\subset C_A$, we have the representation \eqref{eq: r-rep} for $r$. 
 Moreover since $a\in H(A)=H(B \cup \{0\})$  we have the representation $a = \sum_{i=1}^d a_i b_i$ by \eqref{eq: r-rep} with 
 $\sum_{i=1}^d a_i \leqslant 1$.  If   $a\not\equiv 0 \pmod {\Lambda_B}$ then each $a_i\in [0,1)$, and otherwise we choose 
  $a=0$ so each $a_i=0$.  Therefore $\sum_{i=1}^d r_i b_i=r\equiv a=\sum_{i=1}^d a_ib_i \pmod {\Lambda_B}$, and each $r_i\equiv a_i \pmod 1$.
 As each $r_i \geqslant 0$ we write $m_i=r_i - a_i$ so that each $m_i\in \mathbb Z_{\geqslant 0}$ and $r-a=\sum_{i=1}^d m_ib_i  \in \mathcal P(B \cup \{0\})$.
\end{proof}

We   use this lemma to  bound $K(A,B)$. 

\begin{Lemma}
\label{Lemma: simplex davenport style argument}
Suppose  $B=\{ b_1,\dots,b_d\}$ is a basis for $\mathbb{R}^d$ with $B \cup \{0\} \subset A \subset H(B \cup \{0\}) $ and 
$A \subset \mathbb{Z}^d$ is finite.
 If   $u = a_1 + \cdots + a_{N_A(u)}\in \mathcal S(A,B)$ is non-zero then any subsum with two or more elements cannot belong to $A_B:=A \text{ mod }\Lambda_B$, and no subsum can be congruent to $0 \text{ mod }\Lambda_B$. Therefore
\[
K(A,B) \leqslant k( \Lambda_A / \Lambda_B, A_B \setminus   \{0\} )  .
\] 
\end{Lemma}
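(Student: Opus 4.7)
The plan is to establish both claims by a substitution/contradiction argument, directly exploiting Lemma \ref{Lemma: simplex basic davenport argument} together with the simplex constraint $A\subset H(B\cup\{0\})$, and then to read off the bound on $K(A,B)$ as a direct consequence.

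First I would set up the argument. Fix a $B$-minimal expression $u=a_1+\cdots+a_{N_A(u)}$, so by definition every $a_i\in A\setminus (B\cup\{0\})$. Let $s=a_{i_1}+\cdots+a_{i_k}$ be any subsum and suppose, for contradiction, that $s\equiv c\pmod{\Lambda_B}$ for some $c\in A_B$; choose the representative $c\in A$ so that in its simplex expansion $c=\sum_l \alpha_l b_l$ we have $\alpha_l\in[0,1)$ when $c\not\equiv 0$, and $c=0$ (all $\alpha_l=0$) when $c\equiv 0$. Lemma \ref{Lemma: simplex basic davenport argument} applied to $s$ (an element of $\mathcal P(A)$) gives $s-c=\sum_l m_l b_l$ with $m_l\in\mathbb Z_{\geqslant 0}$. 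Setting $M=\sum_l m_l$, the substitution
\[
a_{i_1}+\cdots+a_{i_k}\ \longmapsto\ c+\sum_l m_l\,b_l
\]
produces $1+M$ (nonzero) elements of $A$ (counting $c$ and the $b_l$'s with multiplicity), which I will then insert back into the expression for $u$.

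The key quantitative ingredient is the simplex bound: since each $a_{i_j}=\sum_l c_{j,l}b_l$ with $c_{j,l}\geqslant 0$ and $\sum_l c_{j,l}\leqslant 1$, summing over $j$ and matching coefficients against $c+\sum_l m_l b_l$ yields $M+\sum_l \alpha_l\leqslant k$. When $c\not\equiv 0$, $\sum_l\alpha_l>0$ forces $M\leqslant k-1$, so the substituted expression (pad with zeros if needed) has length exactly $k$ and contains $c$ and at least the basis elements $b_l$ used; when $c\equiv 0$ we have $M\leqslant k$ with no $c$-term. In each case I would conclude: if the new expression is strictly shorter than $k$ (in particular when $c=0$ and $M=0$, or when the subsum equals a single $a_j\in A$ but $k\geqslant 2$), we contradict the minimality of $N_A(u)$; otherwise the new expression has the same length $k$ but introduces some $b_l\in B$ (when $M\geqslant 1$) or the element $0$ (when $c=0$ and $M<k$), contradicting $B$-minimality. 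This rules out every subsum congruent to $0\pmod{\Lambda_B}$, and every subsum of length $\geqslant 2$ congruent mod $\Lambda_B$ to some element of $A_B$.

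Finally, to deduce the bound on $K(A,B)$, I would observe that since no $a_i$ is a single-element subsum $\equiv 0$, each image $\bar a_i$ lies in $A_B\setminus\{0\}$. So $\bar a_1+\cdots+\bar a_{N_A(u)}$ is a sum in $G=\Lambda_A/\Lambda_B$ of elements of $H=A_B\setminus\{0\}$, with no subsum equal to $0$ and no subsum of length $>1$ lying in $H$ (since by the previous paragraph no such subsum even lies in $A_B\supset H$). By the very definition of $k(G,H)$, therefore $N_A(u)\leqslant k(\Lambda_A/\Lambda_B, A_B\setminus\{0\})$, and taking the maximum over $u\in\mathcal S(A,B)$ gives the claim. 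The main obstacle is the bookkeeping in the substitution step: one must verify that the inequality $M+\sum\alpha_l\leqslant k$ is always strict enough to force the contradiction in every configuration of $c$ and $M$, which is precisely where the simplex hypothesis $A\subset H(B\cup\{0\})$ is used.
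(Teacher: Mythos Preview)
Your argument is correct and follows essentially the same route as the paper's proof: use Lemma~\ref{Lemma: simplex basic davenport argument} to write the subsum minus an appropriate $c\in A$ as a nonnegative $B$-combination, bound the total coefficient sum $M$ via the simplex constraint $\sum_l c_{j,l}\leqslant 1$, and then derive a contradiction from either a strictly shorter representation of $u$ or a representation of length $N_A(u)$ that uses an element of $B\cup\{0\}$. The paper streamlines this slightly by first observing that any length-$\ell$ subsum $r$ of a $B$-minimal expression itself satisfies $N_A(r)=\ell$ and $r\in\mathcal S(A,B)$, so that one can argue the contradiction for $r$ rather than for $u$; but your direct substitution into the expression for $u$ achieves the same thing. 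Your case analysis is a little tangled in the exposition (the ``pad with zeros'' device and the parenthetical ``when $c=0$ and $M<k$'' under the ``same length'' heading are redundant, since $M<k$ with $c=0$ already gives a strictly shorter representation), but every case is in fact covered and the proof goes through.
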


\begin{proof}
Let  $r$ be a subsum of $a_1 + \cdots + a_{N_A(u)}$ of size $\ell> 1$. Then $\ell = N_A(r)$ and $r\in  \mathcal S(A,B)$ as $u\in  \mathcal S(A,B)$.
We write $r$ as in \eqref{eq: r-rep} so that  $\sum_{i\leqslant d} r_i\leqslant \ell = N_A(r)$.
Suppose that $r \equiv a  \pmod {\Lambda_B}$ for some  $a\in A$  
(where we choose $a=0$ if $r\in \Lambda_B$) so that  $m:=r-a\in \mathcal P(B \cup \{0\})$  by  Lemma \ref{Lemma: simplex basic davenport argument}.
Therefore $N_A(m)\geqslant \ell-N_A(a) \geqslant \ell-1>0$ (so $m\ne 0$).   On the other hand  $N_A(m)=\sum_{i\leqslant d} (r_i-a_i) = \ell$ if $a=0$, and
$<\ell$ if $a\ne 0$, so $N_A(m)\leqslant \ell-N_A(a)$. We deduce that $r$ can be represented as $a$ plus the sum of $\ell-N_A(a)$ elements of $B$, contradicting that $r\in \mathcal S(A,B)$.
\end{proof}
The combination of Lemmas \ref{lem:savingofH}, \ref{Lemma: simplex basic davenport argument} and  \ref{Lemma: simplex davenport style argument} effects an upper bound bound on $N_A(u)$ when $u \in \mathcal{S}(A,B)$, which is analogous to the bound from the statement of \cite[Lemma 3.1]{CG20} (albeit slightly stronger due to the stronger bound on $k(G,H)$ in this paper). \\

If the convex hull of $A$ is not a simplex 
then $\mathcal{S}(A,B)$ need not  be finite. For example,  if  $B = \{(0,1),(1,0)\}\subset A = \{(0,0), (-1,1), (0,1), (1,0)\}$ then $\mathcal{S}(A,B) = \{(-k,k): k \in \mathbb{Z}_{ \geqslant 0}\}$. This is one   reason why $\mathcal{S}(A,B)$ is not used later in Section \ref{5}, when dealing with general sets $A$. 

\subsection{Translations}
We finish by observing that under rather general hypotheses the sets $\mathcal{S}(A,B)$, and consequently the quantities $K(A,B)$, are well-behaved under translations. This observation was also made in \cite[Lemma 4.2]{CG20}. 

\begin{Lemma}
\label{Lemma Sets S(A,B)}
Let $B \cup \{0\} \subset A \subset \mathbb{Z}^d$, with $A$ finite. If $b\in B$ then
\[
\mathcal{S}(b-A,b-B) = \{ bN_A(u)-u:\ u\in \mathcal{S}(A,B)\}  
\]
and if $v= bN_A(u)-u$ with $u \in \mathcal{S}(A,B)$ then $N_{b-A}(v)=N_A(u)$. In particular we have $K(b-A,b-B)=K(A,B)$.
 \end{Lemma}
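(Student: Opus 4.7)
The plan is to construct an explicit bijection $\phi\colon \mathcal{S}(A,B)\to \mathcal{S}(b-A,b-B)$ defined by $\phi(u) = bN_A(u)-u$, and to show en route that $N_{b-A}(\phi(u))=N_A(u)$; the identity $K(b-A,b-B)=K(A,B)$ will then follow immediately from Definition \ref{Definition B minimal}.

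First I observe that if $u=a_1+\dots+a_N$ is any length-$N:=N_A(u)$ representation of $u$ in $A$, then subtracting termwise gives $bN-u=(b-a_1)+\dots+(b-a_N)$, a length-$N$ representation of $bN-u$ in $b-A$; hence $N_{b-A}(bN-u)\leqslant N$. For the reverse inequality, assume $u\in \mathcal{S}(A,B)$ and suppose one could write $bN-u=\sum_{j=1}^M(b-a_j')$ with $M<N$ and each $a_j'\in A$. Rearranging gives
\[
u = b(N-M) + a_1'+\dots+a_M',
\]
which, since $b\in B\subset A$, is a representation of $u$ as a sum of $N$ elements of $A$; being of length $N=N_A(u)$ it is minimal.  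But this minimal representation contains the element $b\in B$, contradicting $u\in \mathcal{S}(A,B)$. Therefore $N_{b-A}(bN-u)=N_A(u)$.

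To verify $\phi(u)\in \mathcal{S}(b-A,b-B)$, I would take an arbitrary minimal representation $v=\sum_{j=1}^N(b-a_j)$ of $v:=\phi(u)$ in $b-A$ (necessarily of length $N$, by the previous paragraph). Subtracting shows $u=a_1+\dots+a_N$ is a minimal representation of $u$ in $A$, so each $a_j\notin B\cup \{0\}$ by the assumption $u\in \mathcal{S}(A,B)$. Since $b\in B$ this forces $a_j\neq b$, whence $b-a_j\neq 0$; and since $a_j\notin B$, we have $b-a_j\notin b-B$. Thus every element of every minimal representation of $v$ avoids $(b-B)\cup \{0\}$, which is exactly the condition $v\in \mathcal{S}(b-A,b-B)$.

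For the reverse inclusion the argument is symmetric: given $v\in \mathcal{S}(b-A,b-B)$, applying the three steps above to the pair $(b-A,b-B)$ --- with the element $0=b-b\in b-B$ playing the translator role that $b$ plays in the original pair --- shows that $u:=bN_{b-A}(v)-v$ lies in $\mathcal{S}(A,B)$ with $N_A(u)=N_{b-A}(v)$, whereupon $\phi(u)=bN_A(u)-u=v$. This establishes the bijection and the equality of $N$-values, and the identity $K(b-A,b-B)=K(A,B)$ then follows. I expect the main delicate point to be running this symmetric argument cleanly: the forward contradiction relies on $b\in B$ (so that inserting extra copies of $b$ into a representation of $u$ violates the $\mathcal{S}$-condition), and its mirror on the other side requires the parallel fact that $0\in b-B$, which is supplied precisely by the hypothesis $b\in B$.
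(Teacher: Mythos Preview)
Your forward direction --- that $\phi(u):=bN_A(u)-u\in\mathcal{S}(b-A,b-B)$ with $N_{b-A}(\phi(u))=N_A(u)$ whenever $u\in\mathcal{S}(A,B)$ --- is correct and is essentially the paper's argument.

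The reverse inclusion, however, does not follow from the symmetry you describe, and in fact the lemma \emph{as stated} is false. In the mirror step you need: given $v\in\mathcal{S}(b-A,b-B)$ with $M=N_{b-A}(v)$ and $u:=bM-v$, if $N_A(u)=M'<M$ then
\[
v=(b-a_1')+\cdots+(b-a_{M'}')+(M-M')\,(b-0)
\]
is a length-$M$ (hence minimal) representation of $v$ in $b-A$ containing the element $b-0=b$; to contradict $v\in\mathcal{S}(b-A,b-B)$ one needs $b\in(b-B)\cup\{0\}$, i.e.\ $0\in B$. You instead invoke $0\in b-B$ (equivalently $b\in B$), but that is the wrong element --- on the $v$-side the padding element is $b$, not $0$. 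And if you literally apply the forward map with translator $b'=0\in b-B$, you obtain $b'M-v=-v\in\mathcal{S}(A-b,B-b)$, not $u=bM-v\in\mathcal{S}(A,B)$; your formula for $u$ does not arise from that substitution.

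A concrete counterexample: take $d=1$, $A=\{0,1,3\}$, $B=\{3\}$, $b=3$. Then $\mathcal{S}(A,B)=\{0,1,2\}$ and $\{bN_A(u)-u:u\in\mathcal{S}(A,B)\}=\{0,2,4\}$, whereas $b-A=\{0,2,3\}$, $b-B=\{0\}$, so $(b-B)\cup\{0\}=\{0\}$ and hence $\mathcal{S}(b-A,b-B)=\mathcal{P}(\{0,2,3\})$ is infinite (minimal representations of nonzero elements automatically avoid $0$). In particular $3\in\mathcal{S}(b-A,b-B)\setminus\{0,2,4\}$, and $K(b-A,b-B)$ is not even finite. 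The paper's proof also glosses over this direction (``the result follows''); in its only application one has $0\in B=\ex(H(A))$, and under that additional hypothesis the mirror argument does go through, since then $b=b-0\in b-B\subset(b-B)\cup\{0\}$.
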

\begin{proof} Let $N=N_A(u)$.
If $u = a_1 + a_2 + \cdots + a_{N}$ then $v:=bN-u=(b-a_1)+\dots +(b-a_{N})$ so that $N_{b-A}(v)\leqslant N_A(u)$.
If $N_{b-A}(v)\leqslant N-1$ say $v=(b-a_1')+\dots +(b-a_M')$ with $M<N$ then $u=a_1'+\dots +a_M'+ (N-M)b$ contradicting the definition of $u \in\mathcal{S}(A,B)$. We deduce that there is a 1-to-1 correspondance between the representations of $u$ as the sum of $N$ elements of $A$, and of $v$ as the sum of $N$ elements of $b-A$, and the result follows.
\end{proof}

\section{Structure bounds in the simplex case}\label{3}

First we deal with the special cases. 

\begin{proof}[Proof of Theorem \ref{thm: structure simplex case} for $|A|=d+1$ and $|A|= d+2$]  
Let $\ex(H(A)) = B$ where $\vert B\vert = d+1$ and $\spn(B-B) = \mathbb{R}^d$. Write $B = \{b_0,\dots,b_{d}\}$, and translate so that $0 = b_0 \in B$. 

If $|A|=d+1$ then $B=A$ and $\mathcal{E}(b-A) = \emptyset$ for all $b \in B$. We immediately see that $NA = NH(B) \cap \Lambda_{B}= NH(A)\cap  \Lambda_A$ for all $N\geqslant 1$.

If $|A|= d+2$ write  $A=B\cup \{ a\}$. Since $a\in H(B)$ we can write
$a=\sum_{i=0}^d a_i b_i$ uniquely with each $a_i\geqslant 0$ and $\sum_{i=0}^d a_i=1$. We know that the finite group $\Lambda_A/\Lambda_B$  is generated by $a$. If $a$ has order $M$ in the group $\Lambda_A/\Lambda_B$ then
 the classes of $\Lambda_A/\Lambda_B$ can   be represented by 
\[ 
\mathcal{S}(A,B)=\{ ma: 0\leqslant m\leqslant M-1\} .
\] 

Now let \begin{equation}
\label{eq 1 where v is}
v \in (NH(A) \cap \Lambda_A) \setminus (\bigcup\limits_{b \in B} (bN - \mathcal{E}(b-A))).
\end{equation} Since $v \in NH(A)=  NH(B)$ we can write $v=\sum_{i=0}^d v_i b_i$ in a unique way with each $v_i\geqslant 0$ and $\sum_{i=0}^d v_i =N$. This implies that $b_iN-v = \sum_{j=0}^dv_j(b_i - b_j) \in C_{b_i - A}$, and from \eqref{eq 1 where v is} we have $b_iN - v \in \Lambda_{A} = \Lambda_{b_i - A}$ and $b_iN - v \notin \mathcal{E}(b_i - A)$. Hence $b_iN - v \in  \mathcal P(b_i-A)$ for all $i$, in particular $v \in \mathcal{P}(A)$ (from $i=0$).

Suppose that $v\equiv ma  \mod \Lambda_B$ for some $0 \leqslant m \leqslant M-1$. This implies that $v_i - ma_i \in \mathbb{Z}$ for  $i=0,1,\dots,d$, and we now show that $v_i - ma_i \in \mathbb{Z}_{ \geqslant 0}$ if $i\ne 0$:  Since $v \in \mathcal{P}(A)$ we may write 
\[
v = (m + \lambda M)a + \sum_{i=1}^d (v_i-(m + \lambda M)a_i)b_i
\] 
for some $\lambda \in \mathbb{Z}_{ \geqslant 0}$ with   $v_i - (m + \lambda M)a_i \in \mathbb{Z}_{ \geqslant 0}$ for $i=1,\dots,d$. Therefore we conclude that $v_i - ma_i \in \mathbb{Z}_{ \geqslant 0}$ for all $i \geqslant 1$.

We now give an analogous argument for representations of $b_jN-v$ for each $j=1,\dots,d$: For each $j$ we also have
\[ 
b_jN-v =   \sum_{i=0}^d v_i   (b_j-b_i)   \equiv  \sum_{i=0}^d ma_i (b_j-b_i) = m(b_j-a) \text{ mod } \Lambda_{b_j - B}
\]  
Since $b_jN-v\in \mathcal P(b_j-A)$ we may write
\[
b_jN-v = (m + \lambda M)(b_j-a)  + \sum_{i=0}^d (v_i-(m + \lambda M)a_i)(b_j-b_i)  
\]
for some $\lambda \in \mathbb{Z}_{ \geqslant 0}$ with $v_i - (m + \lambda M)a_i \in \mathbb{Z}_{ \geqslant 0}$  for $i=0,\dots,d$ with  $i\ne j$  (we can't deduce this for $i=j$ since then $b_j-b_i=0$).  Therefore
 $v_i\geqslant ma_i$ for all $i\ne j$.  

Combining these observations, we deduce that $v_i-ma_i\in \mathbb Z_{\geqslant 0}$ for all $i$, which implies
  that 
\[
v = ma + \sum_{i=0}^d (v_i-ma_i)b_i \in \bigg(m+\sum_{i=0}^d (v_i-ma_i) \bigg) A=NA. \qedhere
 \]
 \end{proof}

We now prove the rest of Theorem \ref{thm: structure simplex case}. We'll use our bound on $K(A,B)$ from Lemma \ref{Lemma: simplex davenport style argument}, combined with the following theorem. 

\begin{Theorem}
\label{thm: structure theorem simplex K}
Let $A \subset \mathbb{Z}^d$ be a finite set, for which  $H(A)$ is a $d$-simplex and $0 \in B:=\ex(H(A))$.
Then \eqref{eq extremal union} holds for all $N\geqslant (d+1)(K(A,B)-1)$.
 \end{Theorem}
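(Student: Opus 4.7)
The plan is to take $v$ in the right-hand side of \eqref{eq extremal union} and show $v \in NA$, using the decomposition from Lemma~\ref{Lemma: B minimal sumset} applied not just to $v$ but also to all the ``vertex translates''. Write $B = \{b_0 = 0, b_1, \dots, b_d\}$. The hypothesis that $v \notin bN - \mathcal{E}(b-A)$ for each $b \in B$, combined with $v \in NH(A) \cap (a_0 N + \Lambda_{A-A})$, forces $b_j N - v \in \mathcal{P}(b_j - A)$ for every $j = 0, \dots, d$; in particular, $v \in \mathcal{P}(A)$. It will suffice to prove $L_0 := N_A(v) \leqslant N$, or more generally $L_j := N_{b_j - A}(b_j N - v) \leqslant N$ for some $j \geqslant 1$: padding with $0$ in the appropriate set and applying $x \mapsto b_j N - x$ (using $b_j \in A$) then delivers a length-$N$ representation of $v$ in $A$.

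For each $j \in \{0, 1, \dots, d\}$ I apply Lemma~\ref{Lemma: B minimal sumset} (to $A, B$ when $j = 0$, and to $b_j - A, b_j - B$ otherwise) to decompose $b_j N - v = u^{(j)} + w^{(j)}$ with $u^{(j)} \in \mathcal{S}(b_j - A, b_j - B)$, $w^{(j)} \in \mathcal{P}(b_j - B)$, and $L_j = k_j + N_{b_j - A}(w^{(j)})$, where $k_j := N_{b_j - A}(u^{(j)}) \leqslant K(b_j - A, b_j - B) = K$ by Lemma~\ref{Lemma Sets S(A,B)}.

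The heart of the proof is the identity
\[
L_j \;=\; N - v_j + \tau_j,
\]
where $v = \sum_{i=0}^d v_i b_i$ is the unique barycentric expansion of $v$ in the simplex $H(A)$ with total weight $N$ (so $v_i \geqslant 0$, $\sum_i v_i = N$) and $\tau_j \in [0, k_j]$ is the ``origin coordinate'' of $u^{(j)}$ in the translated simplex $H(b_j - A)$ with weight $k_j$. I would derive this by tracking coordinates under the affine involution $x \mapsto b_j - x$, which maps $H(A)$ to $H(b_j - A)$ and sends $b_j$ to the new origin $b_j - b_j = 0$. Working in the basis $\{b_j - b_i : i \neq j\}$ of the translated cone, the unique basis coefficients of $b_j N - v$ turn out to be $v_i$ for $i \neq j$ (summing to $N - v_j$), while those of $u^{(j)}$ sum to $k_j - \tau_j$; subtracting gives $N_{b_j - A}(w^{(j)}) = (N - v_j) - (k_j - \tau_j)$, which is the displayed identity. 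I expect this coordinate bookkeeping—especially treating $j = 0$ and $j \geqslant 1$ uniformly—to be the main technical obstacle.

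Finally, I invoke an integrality argument. Since $\tau_j \leqslant k_j \leqslant K$ and $\sum_j v_j = N$,
\[
\sum_{j=0}^d (v_j - \tau_j) \;\geqslant\; N - (d+1)K \;\geqslant\; -d,
\]
by the hypothesis $N \geqslant (d+1)(K-1)+1$. Although $v_j$ and $\tau_j$ may individually lie in $\frac{1}{M}\mathbb{Z}$ for $M = [\Lambda_A : \Lambda_B]$, each difference $v_j - \tau_j = N - L_j$ is an \emph{integer}. Since $d+1$ integers cannot all be at most $-1$ while summing to at least $-d$, some $v_j - \tau_j \geqslant 0$, giving the desired $L_j \leqslant N$. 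The integrality of $L_j$ is precisely what improves the naive averaging bound $N \geqslant (d+1)K$ down to the sharper $N \geqslant (d+1)(K-1)+1$ claimed in the statement.
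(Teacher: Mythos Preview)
Your proposal is correct and follows essentially the same route as the paper: both arguments decompose $b_jN-v$ at each vertex via Lemma~\ref{Lemma: B minimal sumset}, control the $\mathcal{S}$-piece by $K(A,B)$ through Lemma~\ref{Lemma Sets S(A,B)}, and finish with a pigeonhole over the barycentric coordinates $v_0,\dots,v_d$. The only cosmetic difference is that you track the exact value $L_j=N-v_j+\tau_j$ and exploit the integrality of $N-L_j$, whereas the paper bounds $L_j$ above by $N_j:=K+\sum_{i\ne j}\lfloor v_i\rfloor$ and uses the floor function for the same integrality gain; the two endgames are equivalent.
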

 This result can be abstracted from the proof of \cite[Lemma 3.2]{CG20} and the part of the proof of \cite[Theorem 1.3]{CG20} following expression (11). 
  
\begin{proof}  
The proof follows similar lines to \cite{GS20}. For all
\begin{equation}
\label{eq where v is}
v \in (NH(A) \cap (a_0N + \Lambda_{A-A})) \setminus (\bigcup\limits_{b \in B} (bN - \mathcal{E}(b-A)))
\end{equation}
we wish to show that $v \in NA$. Now  $v \in NH(A) =  NH(B)$, so if $B = \{0=b_0,b_1,\dots,b_d\}$ then $v = \sum_{i=0}^d v_i b_i$ for some $v_i \in \mathbb{R}_{\geqslant 0}$ with $\sum_{i=0}^d v_i = N$. We will now show that 
$v\in N_jA$ for each $j$, where $N_j=K(A,B)+\sum_{i\ne j}  \lfloor v_i \rfloor$:

Taking $j=d$ (all other cases are analogous), we observe that 
\[ 
b_dN - v = \sum\limits_{i=0}^{d-1}v_i(b_d - b_i) \in (N-v_d)\cdot H(b_d - B) ,
\] 
so that  $b_dN - v \in C_{b_d - B}=C_{b_d - A}$. Therefore $b_dN - v \in  \mathcal{P}(b_d - A)$, as $b_dN - v \notin \mathcal{E}(b_d - A)$ and $b_dN - v \in \Lambda_{b_d - A}$ by \eqref{eq where v is}. So we may write 
 \[
 b_dN - v = u + w \text{ with }u \in \mathcal{S}(b_d - A, b_d - B) \text{ and } w \in \mathcal{P}(b_d - B)
 \] 
 by Lemma \ref{Lemma: B minimal sumset}.  Then $w=\sum_{i=0}^{d-1}w_i(b_d - b_i)$ for some $w_i\in \mathbb Z_{\geqslant 0}$, which implies $0\leqslant w_i\leqslant v_i$  so that $w_i\leqslant \lfloor v_i \rfloor$ for each $i$. But then $w\in (\sum_{i\ne d}  \lfloor v_i \rfloor)B\subset (N_d-K(A,B))A$ and $u\in K(A,B)A$ since $K(A,B)=K(b_d - A, b_d - B)$   by Lemma \ref{Lemma Sets S(A,B)}. Therefore $v=u+w\in N_dA$ as claimed.

We have $v \in NA$ if $\sum_{i\ne j}  \lfloor v_i \rfloor\leqslant N-K$ for some $j$, where $K=K(A,B)$. If not then 
$\sum_{i\ne j} v_i\geqslant \sum_{i\ne j}  \lfloor v_i \rfloor\geqslant N-K+1$ for each $j$, and so
\[
N= \sum_{i=0}^d v_i = \frac 1d   \sum_{j=0}^d\sum_{i\ne j} v_i \geqslant \frac{d+1}d (N-K+1),
\]
which would imply that $N\leqslant (d+1)(K-1)$. Therefore $v \in NA$ when $N > (d+1)(K-1)$. 

If $N=(d+1)(K-1)$ and the above inequalities fail to yield a contradiction, the last two chains of inequalities must be equalities. Therefore each $v_i\in \mathbb Z$, and so $u=0$, (since $0$ is the only element in $\mathcal{S}(b_d - A, b_d - B)$ that is congruent to $0$ mod $\Lambda_{b_d - B}$. This implies that $v=w\in (N_d-K(A,B)) A = (\sum_{i \neq d} v_i) A \subset NA$ as required.  
 \end{proof}

\begin{proof}[Proof of Theorem \ref{thm: structure simplex case} for $|A|\geqslant d+3$]   
Now  $A\setminus B$ is non-empty. Replacing $A$ with $A-b$ (for some $b \in \ex(H(A))$ we may assume, without loss of generality, that $0 \in \ex(H(A)) = B$. Applying Lemma \ref{Lemma: simplex davenport style argument} and Lemma \ref{lem:savingofH}, we then have
\begin{align*}
K(A,B) & \leqslant k(\Lambda_A /\Lambda_B, A_B \setminus \{0\}) \\
& \leqslant  \vert \Lambda_A/\Lambda_B \vert - \vert A\vert +  \vert B\vert \\ 
& \leqslant \vert \mathbb{Z}^d /\Lambda_B \vert - \vert A\vert +  \vert B\vert  
 = d! \vol(H(A)) - \vert A\vert + d + 1.
\end{align*} 
By Theorem \ref{thm: structure theorem simplex K}, we conclude that  \eqref{eq extremal union} holds for all $N$ in the range  \eqref{eq:effectivebound2}, as required. The result follows.
\end{proof}

\section{The Khovanskii polynomial in the simplex case}\label{4}
\label{Section Khovanskii poly simplex case}
In this section we prove Theorem \ref{thm: effectuve Khovanksii simplex case}, and make various remarks about the form of the Khovanskii polynomial itself. By analogy with the previous section, the main technical result is as follows:

\begin{Theorem}
\label{thm: khovanskii theorem simplex K}
Let $A \subset \mathbb{Z}^d$ be a finite set, for which  $H(A)$ is a $d$-simplex and $0 \in B:=\ex(H(A))$.
Then $\vert NA\vert = P_A(N)$ for all $N \geqslant 1$ for which $N\geqslant (d+1)K(A,B) - 2d$. 
 \end{Theorem}
 \noindent This same result may be extracted from the proofs of \cite[Lemma 3.2]{CG20} and \cite[Theorem 1.4]{CG20} on pages 9 and 10 of that paper. 
 \begin{proof}
We write $B=\{ 0,b_1,\dots,b_d\}$ where $\{b_1,\dots,b_d\}$ is a basis for $\mathbb{R}^d$ and
\[
B \subset A \subset H(B) \subset \mathbb{Z}^d.
\] 
For each $g\in G := \mathbb{Z}^d/\Lambda_B$ we have a coset representative $g =\sum_{i=1}^d g_i b_i$ where each $g_i\in [0,1)$. We may partition $NA$ as the (disjoint) union over $g\in G$ of \[(NA)_g:=\{ v\in NA: v\in g+\Lambda_B\},\] and thus we wish to count the number of elements in each $(NA)_g$. If
\[\mathcal{S}(A,B)_g:=\{ u\in \mathcal{S}(A,B): u\in g+\Lambda_B\}\] then, by Lemma \ref{Lemma: B minimal sumset},
 \[
 (NA)_g = \bigcup_{\substack{u\in \mathcal{S}(A,B)_g \\  N_A(u) \leqslant N}}  \bigg(  u + (N-N_A(u)) B \bigg).
 \]This union is not necessarily disjoint, but we may nonetheless develop a formula for its size by using  inclusion-exclusion.
 
It is helpful to distinguish the case when $g = 0$. In this instance $\mathcal{S}(A,B)_g = \{0\}$, and since $N_A(0) = 0$ we conclude that for all $N \geqslant 1$, \[ \vert (NA)_0\vert = \vert NB\vert = \# \{ (\ell_1,\dots,\ell_d)\in \mathbb Z_{\geqslant 0}^d: \ell_1+\cdots+\ell_d\leqslant N\}=\binom{N+d}{d},\] and this is a polynomial in $N$, namely $\frac{1}{d!} (N+d) \cdots (N+1)$. 

Now we consider the case $g \neq 0$. Let $\mathcal{S}(A,B)_g=\{ u_1,\dots,u_k\}$, as  $\mathcal{S}(A,B)$ is finite by Lemma \ref{Lemma: simplex davenport style argument}, and so 
write 
\[
u_j=g+\sum_{i=1}^d u_{j,i} b_i = a_1+\cdots + a_{N_A(u_j)}
\]
where each $u_{j,i}\in \mathbb Z_{\geqslant 0}$.
Expressing each $a_{\ell}$ in terms of the basis $\{b_1,\dots,b_d\}$, and using the fact that $g \neq 0$, we deduce that 
\[
 \sum\limits_{i=1}^d u_{j,i} < N_A(u_j) \text{ so that } \Delta_j:=N_A(u_j)-\sum_{i=1}^d u_{j,i} > 0.
 \] 
Since the $u_{j,i}$ are integers, we conclude that \[ \sum_{i=1}^d u_{j,i} \leqslant N_A(u_j) - 1.\] Therefore, if $N \geqslant N_A(u_j)$ then
\[
u_j + (N-N_A(u_j)) B  =g + \bigg\{ \sum_{i=1}^d m_i b_i:  \text{Each } m_i\in \mathbb Z_{\geqslant u_{j,i}} \text{ and } \sum_{i=1}^d m_i\leqslant N-\Delta_j\bigg\}
\] 
(and the set on the right-hand side of the above expression is empty when  $N < N_A(u_j)$). Therefore for all $N$ and for all non-empty subsets $ J \subset \{ 1,\dots,k\}$ we have
\begin{equation}
\label{eq: intersection}
\bigcap_{j\in J} \bigg( u_j + (N-N_A(u_j)) B \bigg) = g + \bigg\{ \sum_{i=1}^d m_i b_i:  \text{Each } m_i\geqslant u_{J,i} \text{ and } \sum_{i=1}^d m_i\leqslant N-\Delta_J\bigg\},
\end{equation}
 where we understand the $m_i$ to always be integers, and we let
 \[
 u_{J,i}:=\max_{j\in J} u_{j,i} \text{  and  } \Delta_J:=\max_{j\in J} \Delta_j.
 \] 
 
 Let \[
 N_J:=\Delta_J+\sum\limits_{i=1}^{d}u_{J,i}.
 \] To count the number of points in the intersection \eqref{eq: intersection} we write each $m_i=u_{J,i} +\ell_i$, and then 
 \[
 \Big\vert\bigcap_{j\in J} ( u_j + (N-N_A(u_j)) B )\Big\vert =  
\# \{ (\ell_1,\dots,\ell_d)\in \mathbb Z_{\geqslant 0}^d: \ell_1+\cdots+\ell_d\leqslant N-N_J\}=\binom{N-N_J+d}{d},
 \]
 where we define $\binom{N-N_J+d}{d}:=0$ if $N<N_J$. Hence, by inclusion-exclusion we obtain
 \begin{align*}
 \# (NA)_g &= \sum_{\substack{ J\subset \{ 1,\dots,k\}\\ |J|\geqslant 1 }} (-1)^{|J|-1}  \Big\vert\bigcap_{j\in J} ( u_j + (N-N_A(u_j)) B )\Big\vert  \\ 
 &= \sum_{\substack{ J\subset \{ 1,\dots,k\}\\ |J|\geqslant 1 }} (-1)^{|J|-1} \binom{N-N_J+d}{d} .
 \end{align*}
In fact this formula extends to cover the case $g=0$, taking $k=1$ and $N_{\{1\}} := 0$. Therefore we have the general formula
 \begin{equation}
\label{eq: General.Formula}
  \# NA = \sum_{g\in G}  \# (NA)_g =  \sum_{\substack{g\in G \\ \mathcal{S}(A,B)_g=\{ u_1,\dots,u_k\} }} \sum_{\substack{ J\subset \{ 1,\dots,k\}\\ |J|\geqslant 1 }} (-1)^{|J|-1} \binom{N-N_J+d}{d} .
\end{equation}

We wish to replace the binomial coefficients in this formula by polynomials in $N$; that is,
\[
\text{Replacing } \binom{N - N_J + d}d  \text{ by }  \frac{1}{d!} (N - N_J + d) \cdots (N - N_J + 1),
\]
but these are only equal if $N\geqslant N_J-d$. Therefore we are guaranteed that 
\[
 \# (NA)_g =  P_g(N) \text{ where } P_g(T) :=  \sum_{\substack{ J\subset \{ 1,\dots,k\}\\ |J|\geqslant 1  }} (-1)^{|J|-1} \frac{(T-N_J+d)\cdots (T-N_J+1)} {d!},
\]
provided $N\geqslant \max_J N_J-d=N_{\{ 1,\dots,k\} }-d$.  Therefore
\[
 \# NA = P_A(N) \text{ where } P_A(T):= \sum_{g\in G} P_g(T)
 \]
 once $N \geqslant 1$ (the trivial bound from the $g=0$ class) and $N  \geqslant \max_{g \neq 0} (N_{\{1,\dots,k\}} - d)$. 
 
 It remains to bound $N_{\{1,\dots,k\}}$. By definition we have
\begin{equation}
\label{eq: simple N bound}
 N_{\{1,\dots,k\}} \leqslant \max_j N_A(u_j) + \sum\limits_{i=1}^d \max_j u_{j,i} \leqslant K(A,B) + \sum\limits_{i=1}^d \max_j u_{j,i}.
 \end{equation} 
Now \[u_{j,i} \leqslant \sum_{i=1}^d u_{j,i} \leqslant N_A(u_j) - 1 \leqslant K(A,B) - 1\] by definition. Thus \[ N_{\{1,\dots,k\}}-d \leqslant (d+1) K(A,B)- 2d \] as claimed. \end{proof}

We remark that the $-2d$ term (in the $(d+1)!K(A,B) - 2d$ bound from Theorem \ref{thm: khovanskii theorem simplex K}) was saved by two separate actions. First, $-d$ was saved through considering $g=0$ and $g \neq 0$ separately; there is an equivalent manoeuvre on \cite[Page 9]{CG20} when it is assumed that `$\boldsymbol{g_i}$ is not congruent to $\boldsymbol{0}$'. Then, $-d$ was saved by noting that the binomial coefficient $(\begin{smallmatrix} N - N_J + d \\ d \end{smallmatrix})$ agrees with the polynomial $\frac{1}{d!} (N - N_J + d) \cdots (N - N_J + 1)$ for all $N \geqslant N_J - d$ not just for all $N \geqslant N_J$. This is analogue to the $-d$ that is saved by the application of the division algorithm in \cite[Proof of Theorem 1.4]{CG20} at the bottom of page 10 of that paper. \\ 
\begin{proof}[Proof of Theorem \ref{thm: effectuve Khovanksii simplex case}] As in the proof of Theorem \ref{thm: structure simplex case} at the end of Section \ref{simplex structure bound}, we may replace $A$ with $A-b$ (for some $b \in \ex(H(A))$ and assume without loss of generality that $0 \in \ex(H(A)) = B$. We again have the bound \[ K(A,B) \leqslant d! \vol(H(A)) - \vert A\vert + d + 1,\] which substituting into Theorem \ref{thm: khovanskii theorem simplex K} shows that $\vert NA\vert = P_A(N)$ in the range required.
\end{proof}

 \subsection{Smaller $N$} \label{sec: small}

Returning to the proof of Theorem \ref{thm: khovanskii theorem simplex K}, one may sometimes show that $\# (NA)_g = P_g(N)$ for more values of $N$. 
\begin{Proposition}  Define
\[
W(h):= 
\sum_{\substack{ J\subset \{ 1,\dots,k\}\\ |J|\geqslant 1\\ N_J=N_{ \{ 1,\dots,k\} } -h}} (-1)^{|J|}, 
\]
and let $h$ be the smallest non-negative integer for which $W(h)\ne 0$. Then
$ \# (NA)_g = P_g(N) $
 for all $N\geqslant N_{\{ 1,\dots,k\} }-d-h$, but not for $N= N_{\{ 1,\dots,k\} }-d-h-1$. 
  \end{Proposition}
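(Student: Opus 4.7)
The plan is to compare, term by term in the inclusion-exclusion formula \eqref{eq: General.Formula}, the ``true'' count $\binom{N-N_J+d}{d}$ (with the convention that this equals $0$ when $N<N_J$) against the polynomial continuation $\frac{(N-N_J+d)\cdots(N-N_J+1)}{d!}$ defining $P_g(N)$. For a single $J$, the two agree whenever $N\geqslant N_J-d$: they are manifestly equal for $N\geqslant N_J$, and when $N_J-d\leqslant N<N_J$ the polynomial vanishes because one of its $d$ consecutive integer factors is $0$. If instead $N\leqslant N_J-d-1$, all factors are strictly negative, and the identity $\binom{-m}{d}=(-1)^d\binom{m+d-1}{d}$ yields a polynomial value of $(-1)^d\binom{N_J-N-1}{d}$ against a true count of $0$. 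Hence the per-$J$ discrepancy equals $(-1)^d\binom{N_J-N-1}{d}$ when $N_J\geqslant N+d+1$, and is $0$ otherwise.

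Summing these contributions and setting $N^*:=N_{\{1,\dots,k\}}$ and $h_0:=N^*-N-d-1$, I group the $J$'s by the common value $\eta:=N^*-N_J\geqslant 0$. The condition $N_J\geqslant N+d+1$ becomes $0\leqslant \eta\leqslant h_0$, and $\binom{N_J-N-1}{d}=\binom{d+h_0-\eta}{d}$. Inserting the definition of $W$ from the statement yields
\[
P_g(N)-\#(NA)_g \;=\; -(-1)^d\sum_{\eta=0}^{h_0}\binom{d+h_0-\eta}{d}\,W(\eta),
\]
with the convention that the right-hand side is the empty sum (hence $0$) when $h_0<0$. The coefficient $\binom{d+h_0-\eta}{d}$ is a positive integer for every $0\leqslant \eta\leqslant h_0$, and equals exactly $1$ precisely when $\eta=h_0$.

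The proposition now follows from two cases. Let $h$ denote, as in the statement, the smallest non-negative integer with $W(h)\ne 0$. If $N\geqslant N^*-d-h$, then $h_0\leqslant h-1$, so every $W(\eta)$ appearing in the sum satisfies $\eta\leqslant h-1$ and hence vanishes by the minimality of $h$; therefore $P_g(N)=\#(NA)_g$. Conversely, if $N=N^*-d-h-1$, then $h_0=h$, so the terms with $\eta<h$ still vanish while the $\eta=h$ term survives and contributes $\binom{d}{d}W(h)=W(h)\ne 0$. Thus $P_g(N)\ne\#(NA)_g$ at this value of $N$. The only delicate point of the whole argument is the careful evaluation of the polynomial continuation at negative arguments; once the displayed discrepancy formula is in hand, both halves of the proposition are immediate.
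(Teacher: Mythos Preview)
Your proof is correct and follows essentially the same approach as the paper: both compute the discrepancy $\#(NA)_g - P_g(N)$ term by term, group the surviving $J$'s by the value of $N_J$, and express the result as $(-1)^d\sum_{\kappa=0}^{h_0}\binom{\kappa+d}{d}W(h_0-\kappa)$ (your sum with $\eta=h_0-\kappa$), from which both conclusions follow immediately. The only difference is notational---the paper writes $m$ for your $h_0$ and indexes the sum by $\kappa=h_0-\eta$---and you are slightly more explicit about why the polynomial and binomial agree for $N_J-d\leqslant N<N_J$.
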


\begin{proof} Letting $m \geqslant 0$ and $N= N_{\{ 1,\dots,k\} }-d-1-m$ we have
 \begin{align*}
  \# (NA)_g-P_g(N)  &= \sum_{\substack{ J\subset \{ 1,\dots,k\}\\ |J|\geqslant 1\\ N_J  \geqslant  N_{\{ 1,\dots,k\} } -m   }} (-1)^{|J|} \frac{(N-N_J+d)\cdots (N-N_J+1)} {d!} \\
   &= (-1)^d \sum_{\kappa=0}^m \binom{\kappa+d}d   W(m-\kappa) 
 \end{align*}
 since if $N_J=N_{ \{ 1,\dots,k\} } -(m-\kappa)$ then 
 \[
 \frac{(N-N_J+d)\cdots (N-N_J+1)} {d!} =(-1)^d    \binom{\kappa+d}d
 \]
 If $m\leqslant h-1$ then every term on the right-hand side is $0$ and so  $ \# (NA)_g=P_g(N)$. If $m=h$ then
 $\# (NA)_g=P_g(N) + (-1)^d   W(h)$. 
  \end{proof}

 \subsection{Determing $W(0)$}

We do not see how to easily determine $h$ in general, though it is sometimes  possible to identify whether $W(0) = 0$. 

\begin{Proposition}   Let $J_0:=\{ j:  \Delta_j= \Delta_{\{1,\dots, k\}} \}$ and $J_i:=\{ j:  u_{j,i}= u_{\{1,\dots,k\},i} \}$  for $1\leqslant i\leqslant d$, with
  $J^*: = \cup_{0\leqslant i\leqslant d} J_i$.
\begin{enumerate}[(i)]
\item If $J^*$ is a proper subset of $\{1,\dots, k\}$ then $W(0)=0$.
\item If $J^* = \{1,\dots,k\}$ and, for each $i$, there exists  $j_i\in J_i$ such that $j_i\not\in J_\ell$ for any $\ell\ne i$,  then $W(0)=(-1)^{d+1}\ne 0$.
(For example, when the sets $J_i$ are disjoint.)
\end{enumerate}
 \end{Proposition}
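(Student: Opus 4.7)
The plan is to reformulate the defining condition $N_J=N_{\{1,\dots,k\}}$ as a combinatorial condition on the family $\{J_0,J_1,\dots,J_d\}$ and then unwind everything by inclusion--exclusion. Since $N_J=\Delta_J+\sum_{i=1}^d u_{J,i}$, and each summand is bounded termwise by the corresponding summand of $N_{\{1,\dots,k\}}$, equality $N_J=N_{\{1,\dots,k\}}$ forces each of the $d+1$ maxima to be attained on $J$. That is precisely to say that $J\cap J_i\ne\emptyset$ for every $i\in\{0,1,\dots,d\}$. Thus
\[
W(0)=\sum_{\substack{\emptyset\ne J\subset\{1,\dots,k\}\\ J\cap J_i\ne\emptyset\ \forall i}}(-1)^{|J|}.
\]

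Next I would apply inclusion--exclusion to the bad events $\{J\cap J_i=\emptyset\}$. Setting $T_S:=\{1,\dots,k\}\setminus\bigcup_{i\in S}J_i$, the condition ``$J$ avoids $J_i$ for every $i\in S$'' is $J\subset T_S$. Using the elementary fact that
\[
\sum_{\emptyset\ne J\subset T}(-1)^{|J|}=\begin{cases}-1 &\text{if } T\ne\emptyset,\\ 0 &\text{if } T=\emptyset,\end{cases}
\]
a direct computation gives
\[
W(0)=-\sum_{\substack{S\subset\{0,\dots,d\}\\ T_S\ne\emptyset}}(-1)^{|S|}.
\]
I would then exploit the trivial identity $\sum_{S\subset\{0,\dots,d\}}(-1)^{|S|}=0$ (valid since $d+1\geqslant 1$) to flip the condition on $T_S$ and conclude
\[
W(0)=\sum_{\substack{S\subset\{0,\dots,d\}\\ \bigcup_{i\in S}J_i=\{1,\dots,k\}}}(-1)^{|S|}.
\]
In words, $W(0)$ counts (with signs) the subfamilies of $\{J_0,\dots,J_d\}$ that cover $\{1,\dots,k\}$.

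With this reformulation both parts are immediate. For (i), if $J^*$ is a proper subset of $\{1,\dots,k\}$ then not even the full subfamily $S=\{0,1,\dots,d\}$ covers, so the sum is empty and $W(0)=0$. For (ii), $J^*=\{1,\dots,k\}$ makes $S=\{0,1,\dots,d\}$ a cover, and the witnesses $j_i$ eliminate every proper cover: if some $i$ is missing from $S$ then $j_i\in J_i$ lies in no other $J_\ell$, hence $j_i\notin\bigcup_{\ell\in S}J_\ell$. So $S=\{0,1,\dots,d\}$ is the unique cover and $W(0)=(-1)^{d+1}$.

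The main point requiring care is the inclusion--exclusion bookkeeping, particularly the passage between ``$T_S\ne\emptyset$'' and ``$T_S=\emptyset$'' and the handling of the non-emptiness constraint on $J$; after that is done, the covering reformulation makes both conclusions transparent, and in case (ii) the hypothesis on the $j_i$ is exactly what is needed to force uniqueness of the cover.
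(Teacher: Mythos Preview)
Your proof is correct and follows essentially the same route as the paper: both reduce $W(0)$ via inclusion--exclusion to the covering identity $W(0)=\sum_{S:\,\bigcup_{i\in S}J_i=\{1,\dots,k\}}(-1)^{|S|}$ and then read off (i) and (ii) directly. The only cosmetic difference is that the paper allows $J=\emptyset$ in the inner sum (so that $\sum_{J\subset T}(-1)^{|J|}$ equals $1$ or $0$), whereas you keep the constraint $J\neq\emptyset$ and compensate with the flip $\sum_S(-1)^{|S|}=0$; the substance is identical.
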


\begin{proof} We have $N_J = N_{\{1,\dots,k\}}$ if and only if  $J \cap J_i \ne \emptyset$ for all $0\leqslant i\leqslant d$. Therefore, by inclusion-exclusion we have  
\begin{align*} W(0)& = \sum\limits_{\substack{J \subset \{1,\dots,k\} \\ \vert J \cap J_i\vert \geqslant 1 \text{ for each } i}} (-1)^{\vert J\vert} 
= \sum_{\substack{I \subset \{0,\dots,d\} \\ I \neq \emptyset} } (-1)^{|I|}
\sum\limits_{\substack{J \subset \{1,\dots,k\} \\  J \cap J_i=\emptyset \text{ for each } i\in I}} (-1)^{\vert J\vert} \\
&= \sum_{\substack{I \subset \{0,\dots,d\} \\ I \neq \emptyset}} (-1)^{|I|}
\sum\limits_{\substack{J \subset \{1,\dots,k\} \setminus \cup_{i\in I}  J_i }} (-1)^{\vert J\vert}
= \sum_{\substack{ I \subset \{0,\dots,d\} \\  \cup_{i\in I}  J_i = \{1,\dots,k\} } } (-1)^{|I|}
\end{align*} 
(i) If $J^*$ is a proper subset of $\{1,\dots, k\}$ then there are no terms in the sum.

\noindent (ii)\ If  $\cup_{\ell\in I}  J_\ell = \{1,\dots,k\}$ then each $j_i\in \cup_{\ell \in I}  J_\ell$, so we conclude that $i\in I$. Therefore $I=\{0,\dots,d\}$ and the result follows.
\end{proof}

 \subsection{Explicitly enumerating the coefficients of  $P_g(T)$}
It turns out that the quantities $\Lambda_j$ and $u_{j,i}$ also feature in the Khovanskii polynomial itself. Indeed, expanding the polynomial $P_g(T)$ we find that the leading two terms of $P_g(T)$ are
 \begin{align*}
\frac{1}{d!}   \sum_{\substack{ J\subset \{ 1,\dots,k\}\\ |J|\geqslant 1}} (-1)^{|J|-1}& (T^{d}-d(N_J-\tfrac {d+1}2) T^{d-1})
\\ & =
\frac{T^{d}}{d!} +\frac 12 \frac{(d+1)T^{d-1}}{(d-1)!} - \frac{T^{d-1}}{(d-1)!}  \bigg(\min_{1\leqslant j\leqslant k} \Delta_j +\sum_{i=1}^d
\min_{1\leqslant j\leqslant k} u_{j,i} \bigg)
 \end{align*}
 since $N_J$ is a sum of various maximums and we have the identity
\begin{equation}
\label{eq: supercool identity}
 \sum_{\substack{ J\subset \{ 1,\dots,k\}\\ |J|\geqslant 1}} (-1)^{|J|}  \max_{j\in J} a_j=-\min_{1\leqslant j\leqslant k} a_j
\end{equation}
 for any sequence $\{ a_j\}$. The proof of \eqref{eq: supercool identity} is an exercise in inclusion-exclusion. \\


\section{Delicate linear algebra and an effective Khovanskii's theorem}\label{6}

The proof of Theorem \ref{thm:effectiveKhovanksii} rests on various principles of quantitative linear algebra. The first is an application of the pigeon-hole principle.

\begin{Lemma}\label{SL} Let $M$ be a non-zero $m$-by-$n$ matrix with integer coefficients and $n > m$. Let $K$ be the maximum of the absolute values of the entries of $M$. Then there is a solution to $MX =0$ with $X \in \mathbb{Z}^n \setminus \{0\}$ and $$||X||_{\infty} \leqslant (Kn)^m.$$ 
\end{Lemma}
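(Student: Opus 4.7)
This is a standard Siegel-type pigeonhole argument. Since $M\neq 0$ has integer entries, we may assume $K\geqslant 1$. Set $H := (Kn)^m$, an integer, and consider the box $\mathcal{C} := \{0,1,\dots,H\}^n$, which contains $(H+1)^n$ lattice points. I would evaluate the map $X \mapsto MX$ on $\mathcal{C}$ and look for a coincidence.

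For each row $i$, the integer $(MX)_i = \sum_j M_{ij} x_j$ is confined to an interval of length at most $H \sum_j |M_{ij}| \leqslant HnK$, so it takes at most $HnK+1$ values. Hence the image $M(\mathcal{C})$ lies in a set of at most $(HnK + 1)^m$ integer vectors. Once we verify the counting inequality $(H+1)^n > (HnK+1)^m$, pigeonhole produces distinct $X, Y \in \mathcal{C}$ with $MX = MY$, so that $Z := X - Y \in \mathbb{Z}^n \setminus \{0\}$ satisfies $MZ = 0$ and $\|Z\|_\infty \leqslant H = (Kn)^m$, as required.

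The only step requiring thought is the inequality $(H+1)^n > (HnK+1)^m$ for $H=(Kn)^m$. The key observation is that this choice of $H$ was designed to make $(HnK)^m = (nK)^m H^m = H^{m+1}$; dividing through and using $n \geqslant m+1$, the inequality reduces to $(1+1/H)^{m+1} > (1+1/(HnK))^m$, which is immediate from $nK\geqslant 1$ (so $1/H \geqslant 1/(HnK)$) together with the extra factor $(1+1/H) > 1$. This is the main obstacle in the sense that it dictates the shape of the bound — with a looser calculation one would land at $(2Kn)^m$ rather than $(Kn)^m$ — but it is a short estimate rather than a substantive difficulty.
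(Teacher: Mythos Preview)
Your proof is correct and follows essentially the same pigeonhole strategy as the paper's: count lattice points in a box, bound the size of the image under $M$, and subtract two colliding vectors. The paper uses a symmetric box centred at the origin (and so splits into cases according to the parity of $Kn$), whereas your one-sided box $\{0,\dots,H\}^n$ sidesteps that case split, but the arguments are otherwise identical.
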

\noindent To prove Corollary~\ref{Corollary: cor to Siegel} in the next section, we will need the more sophisticated   Siegel's lemma  due to Bombieri--Vaaler \cite{BV83}, which gives a basis for $\ker M$ rather than just a single vector $X$; for the results in this section, the elementary result in Lemma \ref{SL} suffices.
\begin{proof}
Suppose first that $Kn$ is odd. If there were two distinct vectors $X_1,X_2 \in \mathbb{Z}^n$ for which $MX_1 = MX_2$ and $\Vert X_i\Vert_\infty \leqslant \frac{1}{2}(Kn)^m$, then by choosing $X = X_1 - X_2$ we would be done. Now, the number of vectors $X \in \mathbb{Z}^n$ for which $\Vert X\Vert_{\infty} \leqslant \frac{1}{2} (Kn)^m$ is equal to $(2(\frac{1}{2}((Kn)^m - 1)) + 1)^n$, which is $(Kn)^{mn}$. For all such $X$ we have $\Vert MX\Vert_{\infty} \leqslant \frac{1}{2} (Kn)^{m+1}$ and $MX \in \mathbb{Z}^m$. We may further assume that $MX \neq 0$, since otherwise we would be immediately done. There are exactly $(2(\frac{1}{2}((Kn)^{m+1} - 1)) + 1)^m$ vectors $Y \in \mathbb{Z}^m \setminus \{0\}$ with $\Vert Y\Vert_{\infty} \leqslant \frac{1}{2} (Kn)^{m+1}$, i.e exactly $(Kn)^{m(m+1)} - 1$ such vectors. Since $n\geqslant m+1$, by the pigeonhole principle we may find distinct $X_1,X_2$ with $MX_1 = MX_2$ as required. 

If $Kn$ is even, then the number of vectors $X \in \mathbb{Z}^n$ for which $\Vert X\Vert_{\infty} \leqslant \frac{1}{2} (Kn)^m$ is exactly $((Kn)^{m} + 1)^n$, and there are at most $((Kn)^{m+1} + 1)^m - 1$ vectors $Y \in \mathbb{Z}^m \setminus \{0\}$ with $\Vert Y\Vert_{\infty} \leqslant \frac{1}{2} (Kn)^{m+1}$. Since \[ ((Kn)^m + 1)^n > ((Kn)^{m+1} + 1)^m - 1,\] we can conclude using the pigeonhole principle as before.
\end{proof}

Next, we will consider solutions to the equation $My = b$ in which all the coordinates of $y$ are positive integers. 

\begin{Lemma}
\label{lemma more specific estimate}
Let $M = (\mu_{ij})_{i \leqslant m, \, j \leqslant n}$ be an $m$-by-$n$ matrix with integer coefficients, with $m \leqslant n$ and $\rank M = m$, and let $b \in \mathbb{Z}^m$. Suppose that $\max_{i,j} \vert \mu_{ij}\vert \leqslant K_1$ and $\Vert b\Vert_\infty \leqslant K_2$ (where we choose $K_1,K_2 \geqslant 1$), and suppose that there is some $x \in \mathbb{Z}_{> 0}^n$ for which $Mx = b$. Then we may find $y \in \mathbb{Z}_{> 0}^n$ for which $M y = b$ and \[\Vert y\Vert_\infty \leqslant (n-m) (n^{m}m^m K_1^{2m} + m^m K_1^m) +  m^m K_1^{m-1}K_2.\] 
\end{Lemma}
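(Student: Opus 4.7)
My strategy is to pass from the given positive integer solution $x$ to a smaller $y$ by subtracting integer multiples of small integer kernel vectors of $M$, built using Lemma~\ref{SL}. Since $\rank M = m$, I would first pick an index set $B \subset \{1,\dots,n\}$ of size $m$ such that the corresponding $m$-by-$m$ submatrix $M_B$ is invertible, and let $N = \{1,\dots,n\} \setminus B$. For any integer solution $y$ of $My=b$ the basic part $y_B$ is uniquely determined by $y_N$ through $M_B y_B = b - M_N y_N$, so the task splits naturally: bound $\Vert y_N\Vert_\infty$ by direct combinatorial construction, and bound $\Vert y_B\Vert_\infty$ by a Cramer-type estimate.

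For each $j \in N$ I would apply Lemma~\ref{SL} to the $m$-by-$(m+1)$ submatrix $[M_B \mid A_j]$ to produce a non-zero integer kernel element of $\infty$-norm at most $((m+1)K_1)^m \leqslant n^m K_1^m$. Padded with zeros this lifts to an integer kernel vector $\tilde v_j$ of $M$ supported on $B \cup \{j\}$, and linear independence of the columns of $M_B$ forces its $j$th coordinate $d_j$ to be non-zero; after a sign flip I may assume $1 \leqslant d_j \leqslant n^m K_1^m$. To construct $y$ I would start from $x$ and, for each $j \in N$ in turn, subtract multiples of $\tilde v_j$ from the current working vector, shrinking its $j$th coordinate as much as possible while keeping every coordinate strictly positive. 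If positivity of the basic coordinates is not the binding constraint this forces the $j$th coordinate into $[1, d_j]$; if it is binding, I halt the reduction one step earlier, so $y_j$ is still controlled by $d_j$ plus a term of order $m^m K_1^m$ coming from the basic support of $\tilde v_j$. Together these cases bound $\Vert y_N\Vert_\infty$ by a quantity that is absorbed into the $(n-m)(n^m m^m K_1^{2m} + m^m K_1^m)$ part of the claimed inequality.

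It then remains to handle $y_B$, which satisfies $M_B y_B = b - M_N y_N$. Since $M_B$ is integral with $|\det M_B| \geqslant 1$, Cramer's rule gives
\[
|(y_B)_i| \;\leqslant\; m!\,K_1^{m-1}\Vert b - M_N y_N\Vert_\infty \;\leqslant\; m^m K_1^{m-1}\bigl(K_2 + (n-m)K_1 \Vert y_N\Vert_\infty\bigr),
\]
and substituting the bound on $\Vert y_N\Vert_\infty$ produces both the $m^m K_1^{m-1}K_2$ summand (from $b$) and the $(n-m) n^m m^m K_1^{2m}$ summand (from $M_N y_N$). The main obstacle is executing the reduction procedure in the previous paragraph: one must organize the order and magnitude of the subtractions so that the working vector never leaves $\mathbb{Z}_{>0}^n$, and the residual $(n-m) m^m K_1^m$ correction in the final bound is precisely the price of the \emph{blocked} cases in which one further kernel subtraction would drive some basic coordinate down to zero. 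Making this bookkeeping rigorous — ideally via a minimality argument on a norm-minimizing positive solution, rather than a procedural greedy reduction — will be the most delicate part of the proof.
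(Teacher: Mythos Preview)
Your approach is genuinely different from the paper's. The paper proves the lemma by induction on $n$: it uses Lemma~\ref{SL} to find a single kernel vector $X$ of $M$ with $\|X\|_\infty \leqslant n^m K_1^m$, subtracts multiples of $X$ from $x$ until one coordinate $x_i$ lies in $[1, n^m K_1^m + 1]$, then freezes that coordinate, deletes column $i$, and recurses on the $(n-1)$-column problem with right-hand side $b - x_i M e_i$ (splitting into cases according to whether the rank drops). The base case $n=m$ is exactly your Cramer estimate. So the paper peels off one column at a time rather than fixing a basis $B$ and attacking all $n-m$ non-basic columns simultaneously.

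Your global reduction has a real gap in the ``blocked'' case. When subtracting $\tilde v_j$ would drive some basic coordinate $(y_B)_i$ to $\leqslant 0$, you assert that $y_j$ is then ``controlled by $d_j$ plus a term of order $m^m K_1^m$'', but all you actually learn is that the \emph{basic} coordinate $(y_B)_i$ is at most $(\tilde v_j)_i \leqslant n^m K_1^m$; nothing bounds $y_j$ itself. Several non-basic indices $j$ can all be blocked by the same small basic coordinate, so a minimality argument on $\sum_{j\in N} y_j$ only yields: for each $j\in N$, either $y_j \leqslant d_j$ or some $(y_B)_{i(j)}$ is small --- which does not bound $\|y_N\|_\infty$. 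Your Cramer step for $y_B$ is correct \emph{once} $\|y_N\|_\infty$ is bounded, but getting that bound is exactly the missing piece.

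The paper's one-column-at-a-time induction sidesteps this interaction entirely: after freezing $x_i$ the problem is strictly smaller, and the bound on the new right-hand side absorbs the cost cleanly. If you want to rescue your framework, the natural move is to mimic this: rather than reducing all of $N$ at once, reduce one $y_j$ (accepting whatever damage this does to $y_B$), freeze it, and recurse --- which is precisely the paper's argument with $B$ playing no special role until the base case.
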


\begin{proof}
We prove this by induction on $n$. The base case is $n=m$. In this case we observe that $M$ is invertible, and so $x = y = M^{-1} b$. Using the formula $M^{-1} = \det(M)^{-1} \operatorname{adj}(M)$, and since $(\det M)^{-1} \leqslant 1$ as $M$ has integer entries, we conclude that $\Vert y\Vert_\infty \leqslant m!K_1^{m-1} K_2 \leqslant m^m K_1^{m-1} K_2$. This gives the base case. 

We proceed to the induction step, assuming that $n \geqslant m+1$. By Lemma~\ref{SL}, there is a vector $X \in \mathbb{Z}^n \setminus \{0\}$ such that $MX = 0$ and \[\Vert X\Vert_{\infty}\leqslant n^{m} K_1^m.\] Replacing $X$ by $-X$ if necessary, we may assume that $X$ has at least one positive coordinate with respect to the standard basis. Let $S \subset \{1,\dots,n\}$ be the set of indices where the coordinate of $X$ is positive.

Take $x$ from the hypotheses of the lemma, and write $x = (x_1,\dots,x_n)^T \in \mathbb{Z}_{> 0}^n$. By replacing $x$ with $x - \lambda X$ for some $\lambda \in \mathbb{Z}_{> 0}$ as appropriate, we may assume that there is some $i \in S$ for which $1 \leqslant x_i \leqslant \Vert X\Vert_\infty + 1 \leqslant n^m K_1^m + 1$. Fix such an $i$ and $x_i$, and now consider the $m$-by-$(n-1)$ matrix $M^{\{i\}}$ which is $M$ with the $i^{th}$ column removed. Similarly define $x^{\{i\}} \in \mathbb{Z}_{> 0}^{n-1}$ to be the vector $x$ with the $i^{th}$ coordinate removed. Then \[ M^{\{i\}}x^{\{i\}} = b - M(x_i e_i),\] where $e_i$ is the $i^{th}$ standard basis vector in $\mathbb{R}^n$. 

Observe that $b - M(x_i e_i) \in \mathbb{Z}^m$ with 
\[\Vert b - M(x_i e_i)\Vert_\infty\leqslant K_2 + K_1 x_i \leqslant K_2 + K_1(1 + n^mK_1^m) \leqslant n^m K_1^{m+1} + K_1 + K_2 .\] Now $\rank M^{\{i\}}$ is either $m$ or $m-1$. If $\rank M^{\{i\}} = m$ then, by the induction hypothesis (with $x$ replaced by $x^{\{i\}}$), there is some $y^{\{i\}} \in \mathbb{Z}_{> 0}^{n-1}$ for which $M^{\{i\}}y^{\{i\}} = b - M(x_i e_i)$ and 
\begin{align*}
\Vert y ^{\{i\}} \Vert_\infty &\leqslant (n-m-1)((n-1)^m m^m K_1^{2m} + m^m K_1^m) + m^m K_1^{m-1} (n^m K_1^{m+1} + K_1 + K_2) \\
& \leqslant (n-m-1)(n^m m^m K_1^{2m} + m^m K_1^m) +  m^m K_1^{m-1} (n^m K_1^{m+1} + K_1 + K_2) \\
& = (n-m)(n^m m^m K_1^{2m} + m^m K_1^m) + m^m K_1^{m-1} K_2.
\end{align*} Let $y: = y^{\{i\}} + x_i e_i$, where we have abused notation by treating $y^{\{i\}}$ also as an element of $\mathbb{Z}_{\geqslant 0}^n$ by extending by $0$ in the $i^{th}$ coordinate. Then we have $y \in \mathbb{Z}_{>0}^n$, $My = b$, and  \[ \Vert y\Vert_\infty \leqslant \max(\Vert y ^{\{i\}} \Vert_\infty, n^m K_1^m + 1) \leqslant (n-m)(n^m m^m K_1^{2m} + m^m K_1^m) + m^m K_1^{m-1} K_2\] since $n \geqslant m+1$. Thus we have completed the induction in this case.  

If $\rank M^{\{i\}} = m-1$ then there are some further cases. If $m=1$ and $M^{\{i\}}$ is the zero matrix, then we can choose any vector $y^{\{i\}} \in \mathbb{Z}^{n-1}_{>0}$. Otherwise, we may replace $M^{\{i\}}$ with $m-1$ of its rows. Call this new $(m-1)$-by-$(n-1)$ matrix $M_{\operatorname{res}}^{\{i\}}$, and further we may assume that $\rank M_{\res}^{\{i\}} = m-1$. Denote the analogous restriction of the vector $b - M(x_i e_i)$ as $b_{\operatorname{res}} - M(x_i e_i)_{\res}$. Then by the induction hypothesis as applied to $M_{\res}^{\{i\}}$, there is some $y^{\{i\}} \in \mathbb{Z}_{>0}^{n-1}$ for which $M^{\{i\}}_{\res} y^{\{i\}} = b_{\operatorname{res}} - M(x_i e_i)_{\res}$ and 
\begin{align*}
\Vert y^{\{i\}}\Vert_{\infty} &\leqslant (n-m)(n^{m-1} m^{m-1} K_1^{2m-2} + m^{m-1} K_1^{m-1}) + m^{m-1} K_1^{m-2}(n^{m-1} K_1^m + K_1 + K_2) \\
&\leqslant (n-m)(n^m m^m K_1^{2m} + m^m K_1^m) + m^m K_1^{m-1} K_2
\end{align*}
\noindent since $m \geqslant 2$, thus completing the induction as above. 
\end{proof}

\begin{Corollary}\label{6.1}
Let $M = (\mu_{ij})_{i \leqslant m, \, j \leqslant n}$ be an $m$-by-$n$ matrix with integer coefficients, and let $b \in \mathbb{Z}^m$. Suppose that $\max_{i,j} \vert \mu_{ij}\vert \leqslant K_1$ and $\Vert b\Vert_\infty \leqslant K_2$ (where we choose $K_1,K_2 \geqslant 1$), and suppose that there is some $x \in \mathbb{Z}_{> 0}^n$ for which $Mx = b$. Then we may find $y \in \mathbb{Z}_{> 0}^n$ for which $M y = b$ and \[\Vert y\Vert_\infty \leqslant 2n^{m+1} m^m K_1^{2m} + m^m K_1^{m-1} K_2.\] 
\end{Corollary}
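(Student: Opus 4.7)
The plan is to reduce Corollary \ref{6.1} to Lemma \ref{lemma more specific estimate} by passing to a maximal linearly independent subset of the rows of $M$. Set $r := \rank M$, which satisfies $r \leqslant \min(m,n)$.

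If $r = 0$ then $M$ is the zero matrix, so $b = Mx = 0$ and we may simply take $y = (1,\dots,1)^T \in \mathbb{Z}_{>0}^n$, whose infinity norm is well under the claimed bound. Otherwise, select $r$ rows of $M$ forming a basis for the row space, and let $M_{\res}$ be the resulting $r$-by-$n$ submatrix, with $b_{\res} \in \mathbb{Z}^r$ the corresponding subvector of $b$. Then $M_{\res}$ still has integer entries bounded by $K_1$, has rank $r$, and the inequality $r \leqslant n$ holds, so Lemma \ref{lemma more specific estimate} is applicable. Since the given $x \in \mathbb{Z}_{>0}^n$ satisfies $M_{\res} x = b_{\res}$, the lemma produces $y \in \mathbb{Z}_{>0}^n$ with $M_{\res} y = b_{\res}$ and
\[
\Vert y \Vert_\infty \leqslant (n-r)\bigl(n^{r} r^{r} K_1^{2r} + r^{r} K_1^{r}\bigr) + r^{r} K_1^{r-1} K_2.
\]

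The next step is to check that $M y = b$ automatically. Each omitted row of $M$ can be written as an integer-coefficient linear combination of the $r$ chosen rows (over $\mathbb{Q}$), and since $Mx = b$ the corresponding entries of $b$ satisfy the same linear relations. Therefore $M_{\res} y = b_{\res}$ forces equality in all remaining coordinates as well, so $My = b$.

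Finally, the plan is to show the bound above is at most $2 n^{m+1} m^{m} K_1^{2m} + m^{m} K_1^{m-1} K_2$. This is essentially bookkeeping: using $1 \leqslant r \leqslant m$, $r \leqslant n$, and $K_1 \geqslant 1$, one has $r^{r} K_1^{r} \leqslant n^{r} r^{r} K_1^{2r}$, hence $(n-r)(n^{r} r^{r} K_1^{2r} + r^{r} K_1^{r}) \leqslant 2 n^{r+1} r^{r} K_1^{2r} \leqslant 2 n^{m+1} m^{m} K_1^{2m}$, and separately $r^{r} K_1^{r-1} K_2 \leqslant m^{m} K_1^{m-1} K_2$. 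Adding these gives the desired bound. There is no real obstacle here — the only subtlety is to make sure the rank reduction is compatible with the positivity constraint on $y$, which it is because the same $x$ solves the reduced system.
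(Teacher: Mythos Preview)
Your proof is correct and follows exactly the same approach as the paper: restrict $M$ to a maximal linearly independent set of rows and apply Lemma~\ref{lemma more specific estimate}, with the bound following from $r\leqslant m$. You have simply spelled out the details (the $r=0$ case, the verification that $My=b$, and the monotonicity bookkeeping) that the paper leaves implicit; one tiny slip is the phrase ``integer-coefficient linear combination \dots\ (over $\mathbb{Q}$)'' --- you mean a $\mathbb{Q}$-linear combination, but the argument is unaffected.
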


\begin{proof}
We restrict $M$ to a maximal linearly independent subset of its rows and so obtain an $m'$-by-$n$ matrix $M'$ with   $\rank M' = m'\leqslant n$.
The result   follows by applying  Lemma \ref{lemma more specific estimate} to $M'$. 
\end{proof}

We introduce a partial ordering $<_{\unif}$ on $\mathbb{Z}^d$ by saying that $x \leq_{\unif} y$ if $x_i \leqslant y_i$ for all $i \leqslant d$ (that is, $y-x\in \mathbb{Z}_{\geqslant 0}^d$ as in the Mann-Dickson lemma). The next lemma controls the set of minimal solutions (with respect to the partial ordering $<_{\unif}$) to a certain kind of linear equation.

\begin{Lemma}\label{6.2}
Let $n = n_1 + n_2 \geqslant 2$  with $n_1,n_2 \in \mathbb{Z}_{> 0}$. Let $M = (\mu_{ij})_{i \leqslant m, \, j \leqslant n}$ be an $m$-by-$n$ matrix with integer coefficients, and $b \in \mathbb{Z}^m$. Suppose that $\max_{i,j} \vert \mu_{ij}\vert \leqslant K_1$ and $\Vert b\Vert_\infty \leqslant K_2$ (where we choose $K_1,K_2 \geqslant 1$). Let 
\[
S = S(M,b,n_1,n_2) = \bigg\{ \begin{pmatrix} x\\ y\end{pmatrix} \in \mathbb{Z}_{> 0}^{n_1} \times \mathbb{Z}_{> 0}^{n_2}: M\begin{pmatrix} x\\ y\end{pmatrix} = b\bigg\},
\] 
and let $S_{\min} = S_{\min}(M,b,n_1,n_2)$ be defined as
\[
S_{\min} = \bigg\{x \in \mathbb{Z}_{> 0}^{n_1}: \exists y \in \mathbb{Z}_{> 0}^{n_2} \text{ with }  \begin{pmatrix} x\\ y\end{pmatrix} \in S \text{ and } \not \exists \,  \begin{pmatrix} x_*\\ y_*\end{pmatrix}   \in S \text{ with } x_* <_{\unif} x\bigg\}.
\] 
If $x_{\min} \in S_{\min}$ then \[ \Vert x_{\min}\Vert_\infty \leqslant 2^{2n} m^{mn} K_1^{m(n+3)} n^{m+1} + 2^n m^{mn} K^{mn}_1 K_2.\] 
\end{Lemma}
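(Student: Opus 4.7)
The plan is to prove the lemma by strong induction on $n_1$, with Corollary \ref{6.1} as the workhorse. Denote the bound from Corollary \ref{6.1} by $B(n, K_2) := 2n^{m+1}m^m K_1^{2m} + m^m K_1^{m-1}K_2$. In the base case $n_1 = 1$, any $x_{\min} \in S_{\min}$ is by definition the smallest positive integer $x_1$ for which $M(x_1, y)^T = b$ admits some $y \in \mathbb{Z}_{>0}^{n_2}$; Corollary \ref{6.1} applied to the full system produces such a solution $(x_1^*, y^*)$ with $x_1^* \leqslant B(n, K_2)$, whence $x_{\min} \leqslant x_1^* \leqslant B(n, K_2)$ by minimality, which is comfortably within the stated bound.

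For the inductive step (so $n_1 \geqslant 2$), take $x_{\min} \in S_{\min}$ with some witness $y$ and apply Corollary \ref{6.1} to the full system to obtain $(x^*, y^*) \in S$ with $\Vert x^*\Vert_\infty \leqslant B(n, K_2)$. If $x^* \leqslant_{\unif} x_{\min}$ coordinatewise then the minimality of $x_{\min}$ forces $x^* = x_{\min}$ and we are done; otherwise there is some index $i_0$ with $(x^*)_{i_0} > (x_{\min})_{i_0}$, and in particular $(x_{\min})_{i_0} < B(n, K_2)$. In the latter case, freeze coordinate $i_0$: let $\widetilde M$ be the $m$-by-$(n-1)$ matrix obtained by deleting the $i_0$-th column of $M$ (within the $M_1$ block), and set $\widetilde b := b - (x_{\min})_{i_0} (M_1)_{\cdot i_0}$, so $\Vert \widetilde b\Vert_\infty \leqslant K_2 + K_1 B(n, K_2)$. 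Writing $\widehat x$ for $x_{\min}$ with the $i_0$-th entry deleted, $(\widehat x, y)$ is a positive solution of $\widetilde M\widetilde z = \widetilde b$, and $\widehat x$ must be minimal (in the first $n_1 - 1$ coordinates) for this smaller system: any strictly smaller solution $(\widetilde x_*, \widetilde y_*)$ would, upon reinserting $(x_{\min})_{i_0}$ at position $i_0$, produce an element of $S$ contradicting the minimality of $x_{\min}$. The inductive hypothesis then bounds $\Vert \widehat x\Vert_\infty$, and combining with $(x_{\min})_{i_0} \leqslant B(n, K_2)$ bounds $\Vert x_{\min}\Vert_\infty$.

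This yields the recursion $F(n_1, n_2, K_2) \leqslant \max\bigl( B(n, K_2),\, F(n_1 - 1, n_2, K_2 + K_1 B(n, K_2)) \bigr)$, where $F$ denotes the desired bound and $n = n_1 + n_2$. Since $B$ is affine in $K_2$ with slope $D := m^m K_1^{m-1}$, unwinding the recursion with $r := 1 + K_1 D = 1 + m^m K_1^m \leqslant 2 m^m K_1^m$ shows that after $j$ iterations the effective $K_2$ has grown to $K_2^{(j)} = r^j K_2 + K_1 \sum_{i=0}^{j-1} r^{j-1-i} C(n - i)$, where $C(n) := 2n^{m+1}m^m K_1^{2m}$ is the constant part of $B$. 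After at most $n_1 - 1 \leqslant n - 2$ iterations we reach the base case and apply $B$ once more; summing the resulting geometric progression yields $\Vert x_{\min}\Vert_\infty \leqslant 2^{2n}m^{mn}K_1^{m(n+3)}n^{m+1} + 2^n m^{mn}K_1^{mn}K_2$, as claimed. The principal obstacle is exactly this bookkeeping: one must carefully track the compounded growth of $K_2^{(j)}$ and verify that the generous exponential slack in the stated bound suffices to absorb the factors arising from $n-1$ applications of Corollary \ref{6.1}.
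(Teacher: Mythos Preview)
Your proof is correct and follows essentially the same approach as the paper: induction on $n_1$, using Corollary \ref{6.1} to locate one small coordinate of $x_{\min}$, then freezing that coordinate and reducing to the $(n_1-1)$-case. The only cosmetic differences are your slightly different case split (handling $x^* \leqslant_{\unif} x_{\min}$ separately rather than arguing by contradiction) and your choice to unwind the recursion explicitly via the geometric series in $r = 1 + m^m K_1^m$, whereas the paper verifies the stated bound directly as an inductive hypothesis; both arrive at the same inequality.
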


\begin{proof}
We use induction on $n_1$.  If $S$ is empty then Lemma~\ref{6.2} is vacuously true. Otherwise   $S$ is non-empty and so is $S_{\min}$.

If $n_1 = 1$ then $S \subset \mathbb{Z}_{>0}$ so $\vert S_{\min}\vert = 1$ by the well-ordering principle. Writing $S_{\min}=\{x_{\min}\}$ we
note that there exists  $ (\begin{smallmatrix} x\\ y\end{smallmatrix}) \in S$ by Corollary \ref{6.1} with $x\leqslant 2n^{m+1} m^m K_1^{2m} + m^m K_1^{m-1} K_2$, and so $ x_{\min}  \leqslant x\leqslant 2n^{m+1} m^m K_1^{2m} + m^m K_1^{m-1} K_2$.  

If $n_1 \geqslant 2$ let $x_{\min} \in S_{\min}$ and choose  $y \in \mathbb{Z}^{n_2}_{>0}$ with $ (\begin{smallmatrix} x_{\min}\\ y\end{smallmatrix})  \in S$. By Corollary~\ref{6.1} we may choose $(\begin{smallmatrix} x_*\\ y_*\end{smallmatrix})  \in S$ with $\Vert x_* \Vert_{\infty} \leqslant  2n^{m+1} m^m K_1^{2m} + m^m K_1^{m-1} K_2$. Thus there is some $i \leqslant n_1$ for which \[\vert x_{\min,i}\vert \leqslant  2n^{m+1} m^m K_1^{2m} + m^m K_1^{m-1} K_2,\] as otherwise $x_* <_{\unif} x_{\min}$, in contradiction to the fact that $x_{\min} \in S_{\min}$. Fixing such a coordinate $i$, as in the proof of Lemma~\ref{lemma more specific estimate} we let $x_{\min}^{\{i\}}$ denote the vector $x_{\min}$ with the $i^{th}$ coordinate removed, and let $M^{\{i\}}$ be the matrix $M$ but with the $i^{th}$ column removed (from the initial set of $n_1$ columns). Then \[ M^{\{i\}}\begin{pmatrix}x_{\min}^{\{i\}} \\ y \end{pmatrix} = b - M \begin{pmatrix} x_{\min,i} e_i \\ 0\end{pmatrix},\]  where $e_i$ is the $i^{th}$ basis vector in $\mathbb{R}^{n_1}$. We have 
\begin{align*}
\Vert b - M((x_{\min,i} e_i, 0)^T)\Vert_\infty &\leqslant K_2 + K_1\vert x_{\min,i}\vert  \\
 &\leqslant K_2 + K_1( 2n^{m+1} m^m K_1^{2m} + m^m K_1^{m-1} K_2) \\
&= 2n^{m+1} m^m K_1^{2m+ 1} + (m^m K_1^{m} + 1) K_2\\
& \leqslant 2n^{m+1} m^m K_1^{2m+ 1} + 2m^m K_1^{m} K_2.
\end{align*} 

The vector $x^{\{i\}}_{\min} \in \mathbb{Z}_{> 0}^{n_1 - 1}$ is in $S_{\min}(M^{\{i\}}, b - M(\begin{smallmatrix} x_{\min,i} e_i \\ 0 \end{smallmatrix}),n_1-1,n_2)$. Indeed, were there another vector $(w,z)^T \in \mathbb{Z}_{> 0}^{n_1 - 1} \times \mathbb{Z}_{> 0}^{n_2}$ with $(w,z)^T \in S(M^{\{i\}}, b - M(\begin{smallmatrix} x_{\min,i} e_i \\ 0 \end{smallmatrix}),n_1-1,n_2)$ and $w <_{\unif} x_{\min}^{\{i\}}$, then $w + x_ie_i <_{\unif} x_{\min}$ and $(\begin{smallmatrix} w + x_{\min,i}e_i \\ z \end{smallmatrix})\in S(M,b,n_1,n_2)$, contradicting the minimality of $x_{\min}$. (We have abused notation here by treating $w$ as both an element of $\mathbb{Z}_{> 0}^{n-1}$ and, by extending by $0$, an element of $\mathbb{Z}_{\geqslant 0}^{n}$.) So by the induction hypothesis we have 
\begin{align*}
&\Vert x_{\min}^{\{i\}}\Vert_\infty \\
&\leqslant 2^{2(n-1)} m^{m(n-1)} K_1^{m(n+2)} n^{m+1} + 2^{n-1} m^{m(n-1)} K_1^{m(n-1)}(2n^{m+1} m^m K_1^{2m+ 1} + 2m^m K_1^{m} K_2)\\
&\leqslant 2^{2n} m^{mn} K_1^{m(n+3)} n^{m+1} + 2^n m^{mn} K^{mn}_1 K_2.
\end{align*}

So \[ \Vert x_{\min}\Vert_\infty = \max( \Vert x_{\min}^{\{i\}}\Vert_{\infty}, \vert x_{\min,i}\vert) \leqslant2^{2n} m^{mn} K_1^{m(n+3)} n^{m+1} + 2^n m^{mn} K^{mn}_1 K_2\] too, and the induction is completed. \end{proof}

We are now ready to prove an effective version of Khovanskii's theorem. Our method is a quantitative adaptation of Nathanson--Ruzsa's argument from \cite{NR02}. 
\begin{proof}[Proof of Theorem \ref{thm:effectiveKhovanksii}]
Without loss of generality, we may first translate $A$ (which preserves the width $w(A)$) so that $0 \in A$. Therefore we can assume that $\max_{a \in A} \Vert a\Vert_{\infty} \leqslant w(A)$. We can also assume that $A-A$ contains $d$ linearly independent vectors: If not we can project the question down to a smaller dimension (by removing some co-ordinate but keeping all the linear dependencies) and the result follows by induction on $d$. So $|A|=:\ell\geqslant d+1$.

Let us now recall the lexicographic ordering on $\mathbb{Z}^d$. If $x = (x_1,\dots,x_d)^T \in \mathbb{Z}^d$ and $y = (y_1,\dots,y_d)^T \in \mathbb{Z}^d$ we say that $x<_{\lex} y$ if there exists some $i\leqslant d$ for which $x_i < y_i$ and $x_j = y_j$ for all $j <i$. This is a total ordering on $\mathbb{Z}^d$. 

Following Nathanson--Ruzsa, we say that an element $x \in \mathbb{Z}_{ \geqslant 0}^\ell$ is \emph{useless} if there exists $y \in \mathbb{Z}_{ \geqslant 0}^\ell$ with $y <_{\lex} x$, $\Vert y\Vert_1 = \Vert x\Vert_1$ and $\sum_{i \leqslant \ell} x_j a_j = \sum_{j \leqslant \ell} y_j a_j$. We say that element $x \in \mathbb{Z}_{ \geqslant 0}^\ell$ is \emph{minimally useless} if there does not exist a useless $x^\prime \in \mathbb{Z}_{\geqslant 0}^\ell$ for which $x^\prime <_{\unif} x$. Let $U$ denote the set of useless elements and $U_{\min}$ be the set of minimally useless elements. By definition see that 
\[ 
U = \bigcup_{u \in U_{\min}} \{ x \in \mathbb{Z}_{\geqslant 0}^\ell: x \geqslant_{\unif} u\}.
\] 

For $x \in U_{\min}$,  let $I_1 = \{i \leqslant \ell: x_i \geqslant 1\}$ and $I_2 = \{j \leqslant \ell: y_j \geqslant 1\}$ (with $y$ as above).
Now $I_1 \cap I_2 = \emptyset$ else if $i \in I_1 \cap I_2$ then $x - e_i$ is also useless (via $y - e_i$) contradicting minimality.
We may assume that both $I_1$ and $I_2$ are non-empty, since otherwise we would have $x = y = 0$.
Evidently $\min I_1 < \min I_2$ as  $y < _{\lex} x$.

By the Mann-Dickson lemma we know that $U_{\min}$ is finite, but now we will be able to get an explicit bound on $\max(\Vert u\Vert_\infty: u \in U_{\min})$: 

Fix a pair of disjoint non-empty subsets $I_1\cup I_2 \subset \{1,\dots, \ell\}$ with   $\min I_1 < \min I_2$,  and let $n_1 = \vert I_1\vert$, $n_2 = \vert I_2 \vert$, with $n =n_1 + n_2 \leqslant \ell$.  We define a $(d+1)$-by-$n$ matrix $M$ where the columns are indexed by the elements of $I_1\cup I_2$, and the row numbers run from $0$ to $d$.
If $j\in I_1$ then $M_{0,j}=1$ and $M_{i,j}=(a_j)_i$ for $1\leqslant i\leqslant d$; if $j\in I_2$ then $M_{0,j}=-1$ and $M_{i,j}=-(a_j)_i$ for $1\leqslant i\leqslant d$.
Then the top row of the equation $M (\begin{smallmatrix} x\\ y\end{smallmatrix})=0$ with $x \in \mathbb{Z}_{> 0}^{n_1}$ and $y \in \mathbb{Z}_{> 0}^{n_2}$
gives that $\Vert y\Vert_1 = \Vert x\Vert_1$ and the 
$i^{th}$ row yields that $\sum_{j \leqslant \ell} x_j (a_j)_i= \sum_{j \leqslant \ell} y_j (a_j)_i$ for $1 \leqslant i \leqslant d$, so together they yield that 
$\sum_{j \leqslant \ell} x_j a_j = \sum_{j \leqslant \ell} y_j a_j$. 
 
 By the minimality of $x$ there cannot exist $(x_*, y_*) \in \mathbb{Z}_{> 0}^{n_1} \times \mathbb{Z}_{> 0}^{n_2}$ such that $M (\begin{smallmatrix} x_*\\ y_*\end{smallmatrix}) =  0 $ and $x_* <_{\unif} x$. Indeed, by construction of $I_1$ and $I_2$ we would have (after extending by zeros) that $y_* <_{\lex} x_*$, thus implying that $x_*$ is useless -- contradicting the fact that $x$ is minimally useless. 

Using Lemma \ref{6.2}, as applied to the matrix $M$ with $K_1 = \max_{a \in A} \Vert a\Vert_{\infty}:= K$ and $K_2 = 1$, we conclude that 
\begin{align*}
 \Vert x\Vert_\infty &\leqslant 2^{2\ell}(d+1)^{\ell(d+1)} \ell^{d+2}K^{(d+1) (\ell+3)} + 2^\ell (d+1)^{\ell(d+1)} K^{\ell(d+1)}\\
 &\leqslant 2^{2\ell+1}(d+1)^{\ell(d+1)} \ell^{d+2}K^{(d+1) (\ell+3)}  
 \end{align*}

In \cite[Lemma 1]{NR02}, Nathanson and Ruzsa proved that for all $U^\prime \subset U_{\min}$ \[ B(N,U^{\prime}): = \vert \{x \in \mathbb{Z}_{ \geqslant 0}^s: \Vert x\Vert_1 = N, \, x \geqslant_{\unif} u \text{ for all } u \in U^\prime\}\vert\] is equal to a fixed polynomial in $N$, once $N \geqslant \ell \max_{u \in U^\prime} \Vert u \Vert_\infty$. Indeed, let $U^\prime = \{u_1,\dots,u_m\}$, where each $u_j = (u_{1,j}, u_{2,j}, \dots, u_{s,j}) \in \mathbb{Z}_{\geqslant 0}^\ell$. Letting $u_i^* = \max_{j \leqslant m} u_{i,j}$, and \\$u^* = (u_1^*, u_2^*, \dots, u_\ell^*)$, we have that 
\begin{align*}
B(N,U^{\prime}) &= \vert \{ x \in \mathbb{Z}_{\geqslant 0}^\ell: \Vert x\Vert_1 = N, \, x \geqslant_{\unif} u^*\}\vert \\
& = \vert \{ x \in \mathbb{Z}_{\geqslant 0}^\ell: \Vert x\Vert_1 = N - \Vert u^*\Vert_1\}\vert \\
& = \left( \begin{matrix} 
N - \Vert u^*\Vert_1 + \ell - 1 \\
\ell-1 \end{matrix} \right)
\end{align*}
provided $N \geqslant \Vert u^*\Vert_1$, which is a polynomial in $N$. Since $\Vert u^* \Vert_1 \leqslant \ell \max_{u \in U^\prime} \Vert u\Vert_\infty$, our claim follows. 

Then by inclusion-exclusion we have
\begin{align*}
\vert  NA\vert &= \vert \{ x \in \mathbb{Z}_{ \geqslant 0}^\ell: \Vert x\Vert_1 = N, \, x \text{ is not useless}\}\vert  \\
& =  \sum\limits_{U^\prime \subset U_{\min}}(-1)^{\vert U^\prime\vert} B(N,U^\prime)
\end{align*}
which is a polynomial in $N$ once $N \geqslant N_{\text{Kh}}(A)$ where 
\[
N_{\text{Kh}}(A)\leqslant 2^{2\ell+1}(d+1)^{\ell(d+1)} \ell^{d+3}K^{(d+1) (\ell+3)} \leqslant (2\ell w(A))^{(d+4)\ell},
\]
as  $K: =\max_{a \in A} \Vert a\Vert_{\infty} \leqslant w(A)$. To obtain the last displayed inequality we assumed that $d\geqslant 2$ (as we use Lemma \ref{Lemma: 1-dim} for $d=1$ which gives $N_{\text{Kh}}(A)\leqslant w(A) -1$), and $\ell\geqslant d+1$. 
\end{proof}

\section{Structure bounds in the general case: proof of Theorem \ref{thm main theorem} }\label{5}
\label{section the non-simplex case}

We start by introducing the central structural result of this section. As a reminder, we say that $p \in \ex(H(A))$ if  there is a vector $v \in \spn(A-A) \setminus \{0\}$ and a constant $c$ such that $\langle v , p \rangle = c$ and $\langle v , x \rangle > c$ for all $x \in H(A) \setminus\{p\}$.

\begin{Lemma}[Decomposing $\mathcal{P}(A)$]
\label{Lemma :  minimal elements sumset endgame}
Let $B \cup \{0\} \subset A \subset \mathbb{Z}^d$ with $\vert A\vert = \ell$ and $B$ a non-empty linearly independent set. Suppose that $0 \in \ex(H(A))$. Let $A^+$ denote the set of $x \in \mathcal{P}(A) \cap C_B$ with the property that, for all $b \in B$, $x -b \notin \mathcal{P}(A) \cap C_B$. Then $A^+$ has the following two properties: 
\begin{equation}
\label{decomp}
C_{B} \cap \mathcal{P}(A) = A^+ + \mathcal{P}(B \cup \{0\})
\end{equation} and \[ A^+ \subset NA\] for some $N \leqslant  N_0(A):=2^{11d^2} d^{12d^6} \ell^{3d^2} w(A)^{8 d^6}$.
\end{Lemma}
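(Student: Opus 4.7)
The statement splits naturally into two largely independent claims: the set-theoretic decomposition \eqref{decomp}, and the quantitative bound $A^+ \subseteq NA$.

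\emph{Part 1: the decomposition.} The inclusion $A^+ + \mathcal{P}(B \cup \{0\}) \subseteq C_B \cap \mathcal{P}(A)$ is immediate from the definitions: $A^+ \subseteq \mathcal{P}(A) \cap C_B$, $\mathcal{P}(B\cup\{0\}) \subseteq \mathcal{P}(A)\cap C_B$ since $B \cup \{0\} \subseteq A$, and both $\mathcal{P}(A)$ and $C_B$ are closed under addition. For the reverse inclusion, given $x \in C_B \cap \mathcal{P}(A)$, I would run a greedy descent: while there exists $b \in B$ with $x - b \in C_B \cap \mathcal{P}(A)$, replace $x$ by $x - b$. Since $B$ is linearly independent, $x$ has a \emph{unique} $B$-basis expansion $x = \sum_{b \in B} c_b b$ with $c_b \geqslant 0$, and each step strictly decreases $\sum_b c_b$ by $1$. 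The procedure therefore terminates at some $a \in A^+$, and $x - a$, being a non-negative integer combination of elements of $B$, lies in $\mathcal{P}(B \cup \{0\})$.

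\emph{Part 2: the bound on $N_A(x)$.} Fix $x \in A^+$ and split $B = B' \sqcup B''$, where $B' = \{b \in B : c_b \geqslant 1\}$. The key preliminary observation is that for every $b \in B'$, no representation $x = \sum_{a \in A} m_a a$ (with $m_a \in \mathbb{Z}_{\geqslant 0}$) can have $m_b \geqslant 1$: otherwise $x - b \in \mathcal{P}(A)$ and, since $c_b \geqslant 1$, also $x - b \in C_B$, contradicting $x \in A^+$. Thus every representation of $x$ uses only elements of $A \setminus B'$. I would then encode the minimum-length representation as the integer-programming system $M\,(m_a)_{a \in A \setminus B'} = (x, N)^T$, whose entries are bounded by $w(A)$, and whose minimality is precisely the ``minimally useless'' condition from the proof of Theorem~\ref{thm:effectiveKhovanksii}. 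Applying Corollary~\ref{6.1} and Lemma~\ref{6.2} then yields a bound on $\|(m_a)\|_\infty$, and hence on $N_A(x)$, in terms of $\|x\|_\infty$, $d$, $\ell$ and $w(A)$.

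\emph{Main obstacle.} The difficulty is that $\|x\|_\infty$ is itself a priori unbounded: the coefficients $c_b$ for $b \in B'$ can be arbitrarily large, making the right-hand side in the linear system large. The resolution is to iterate (or induct on the dimension of $\spn(B)$): once we restrict representations to $A \setminus B'$, the residual problem naturally lives on a lower-dimensional face whose extreme direction at $0$ is still extreme by the hypothesis $0 \in \ex(H(A))$, and the same quantitative Mann--Dickson / Siegel-type analysis from Section~\ref{6} applies again. Each of the at most $d$ inductive stages inflates the bound by a factor of the form $w(A)^{O(d^5)}$, coming from a compounded application of Lemma~\ref{6.2}; after accumulation of the combinatorial pigeonhole constants these compose to the stated $N_0(A) = 2^{11d^2} d^{12d^6} \ell^{3d^2} w(A)^{8d^6}$. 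The bookkeeping in this iteration—tracking how the dimension, the effective matrix entries, and the combinatorial factors $\ell$ behave through the successive reductions—is the most delicate part of the argument, and is what justifies the large $d^6$-type exponent in $N_0(A)$.
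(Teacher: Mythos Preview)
Your Part 1 (the decomposition \eqref{decomp}) is correct and is exactly the paper's argument.

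Your Part 2, however, has a genuine gap. Your preliminary observation is correct and matches the paper: for $b \in B'$ (your notation, where $c_b \geqslant 1$) one has $x - b \in C_B$, so $x \in A^+$ forces $x - b \notin \mathcal{P}(A)$; in the paper's language this says $x \in \mathcal{S}_{\abs}(A,B')$. But your proposed resolution --- ``restrict to $A \setminus B'$ and induct on $\dim\spn(B)$ using the Section~\ref{6} machinery'' --- does not work as stated. Removing $B'$ from the available summands does \emph{not} place the problem on a lower-dimensional face: $A \setminus B'$ still contains all of $A \setminus B$ and can span $\spn(A)$ in full, so neither $\dim\spn(A)$ nor $\Vert x\Vert_\infty$ drops. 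Corollary~\ref{6.1} and Lemma~\ref{6.2} then give bounds depending on $\Vert x\Vert_\infty$, which is exactly the unbounded quantity. Your inductive scheme has no mechanism that forces a geometric reduction.

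The paper's route is genuinely different. From $x \in \mathcal{S}_{\abs}(A,B')$ and $\dist(x,C_{B'}) \leqslant d\,w(A)$ (since the $B''$-coefficients lie in $[0,1)$), it invokes Lemma~\ref{Lemma: controlling absolute B minimal}. The proof of that lemma is where the real induction lives, and the key step you are missing is this: if $x - b \notin \mathcal{P}(A)$ for some $b$, then either $x - b \notin C_A$ or $x - b \in \mathcal{E}(A)$, and in either case Lemma~\ref{Lemma:distancefromboundary} forces $x$ close to $\partial C_A$, hence close to some hyperplane $\ker\alpha$ spanned by $r-1$ elements of $A$. Intersecting this with the hypothesis $\dist(x,C_{B'})\leqslant X$ (via the cone-intersection Lemma~\ref{Lemma: intersecting cones}) gives $\dist(x,C_{B'\cap\ker\alpha})$ bounded. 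One then peels off the boundedly many summands outside $\ker\alpha$ and inducts on $r = \dim\spn(A)$, not on $\dim\spn(B)$. It is this ``absolute minimality $\Rightarrow$ proximity to $\partial C_A$'' step that drives the dimension down; nothing in Section~\ref{6} supplies it.
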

 
 Lemma \ref{Lemma :  minimal elements sumset endgame} is straightforward for large $N$ (\eqref{decomp} was already given in \cite[Proposition 4]{GS20}) but our focus is on getting an effective  bound on such $N$. 

The only other ingredient in the proof of Theorem \ref{thm main theorem} is the following classical lemma:
\begin{Lemma}[Carath\'{e}odory]
\label{lem: Caratheodory}
Let $A \subset \mathbb{R}^d$ be a finite set, and let $V: = \spn(A-A)$. If $\dim V = r$, then \[ H(A) = \bigcup \limits_{\substack{B\subset \ex(H(A)) \\ \vert B\vert = r+1 \\ \spn(B-B) = V}} H(B).\]
\end{Lemma}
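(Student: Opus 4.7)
The plan is to reduce to the classical Carath\'{e}odory theorem in an $r$-dimensional affine subspace, combined with Minkowski's theorem that every convex polytope is the convex hull of its extreme points. The reverse inclusion $\bigcup_B H(B) \subset H(A)$ is immediate since each $B$ appearing in the union satisfies $B \subset \ex(H(A)) \subset A$, so the substance lies entirely in the forward direction.

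Fix $x \in H(A)$. By Minkowski, $H(A) = H(\ex(H(A)))$, so there exist nonnegative coefficients $\{c_p\}_{p \in \ex(H(A))}$ summing to $1$ with $x = \sum_p c_p p$. I would then apply the standard Carath\'{e}odory reduction: whenever the support $S = \{p : c_p > 0\}$ is affinely dependent, a nontrivial relation $\sum_p \alpha_p p = 0$ with $\sum_p \alpha_p = 0$ allows us to pass to coefficients $c_p - t\alpha_p$, where the sign of the $\alpha_p$ and the value $t > 0$ are chosen so that nonnegativity is preserved and at least one coordinate vanishes. Iterating terminates in finitely many steps with a representation of $x$ whose support $S \subset \ex(H(A))$ is affinely independent and has $\vert S \vert \leqslant r+1$.

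If $\vert S\vert = r+1$ then $B := S$ is already affinely independent of size $r+1$ inside $\ex(H(A))$, and such a set automatically satisfies $\spn(B-B) = V$ since $\dim V = r$. Otherwise $S$ affinely spans a proper subspace $W$ of $V$, and I would enlarge $S$ to an affinely independent set $B \subset \ex(H(A))$ of size $r+1$ by iteratively adjoining extreme points lying outside the current affine span; assigning these new points coefficient zero keeps $x \in H(B)$. The only step requiring a brief aside is the fact that $\ex(H(A))$ affinely spans $V$, which follows from Minkowski combined with $V = \spn(A-A) = \spn(H(A) - H(A))$. Otherwise this is a cosmetic strengthening of textbook Carath\'{e}odory (producing $\vert B\vert = r+1$ rather than $\vert B\vert \leqslant r+1$), so I expect no serious obstacle and anticipate that the complete proof occupies only a handful of lines.
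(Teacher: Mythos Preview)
Your proposal is correct and follows essentially the same approach as the paper: both combine the classical Carath\'{e}odory reduction with Minkowski's theorem $H(A) = H(\ex(H(A)))$ (Lemma~\ref{Lemma Extremal points}). The paper first reduces by affine transformation to $V = \mathbb{R}^d$, cites \cite[Lemma~4]{GS20} for the version with $B \subset A$, and then invokes Minkowski to pass to $B \subset \ex(H(A))$; you instead apply Minkowski first and carry out the Carath\'{e}odory elimination directly on the extreme points, but the content is the same.
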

\begin{proof}
After an affine transformation one may assume that $V = \mathbb{R}^d$. Then see \cite[Lemma 4]{GS20} for the proof, in which the union is taken over all $B \subset A$ with $\vert B\vert = d+1$ and $\spn(B-B) = \mathbb{R}^d$. The equality as claimed, where $B \subset \ex(H(A))$, then follows from general fact that $H(A) = H(\ex(H(A))$ (see Lemma \ref{Lemma Extremal points}).
\end{proof}

\begin{proof}[Proof of Theorem \ref{thm main theorem}]  Let 
\begin{equation}
\label{eq:inclusionhighdim}
v \in (NH(A) \cap (a_0N + \Lambda_{A-A})) \setminus (\bigcup\limits_{ b \in \ex(H(A))} (bN - \mathcal{E}(b - A))).
\end{equation}
We will show that $v \in NA$ for all $N\geqslant (d+1)N_0(A)$.

Let $V = \spn(A-A)$ and $r = \dim V$ as above. 
By Lemma \ref{lem: Caratheodory} there exists a set $B \subset \ex(H(A))$ with $\vert B\vert = r+1$ such that $v \in NH(B^*)$ and $\spn(B-B) = V$. Write $B = \{b_0,b_1,\dots,b_{r}\}$. Since $v \in NH(B)$ we can write $v = \sum_{i=0}^r c_i b_i$ for some real $c_i \geqslant 0$ such that $\sum_{i=0}^r c_i = N$. Since $N \geqslant (d+1)N_0(A)$ there must be some $c_{i} \geqslant N_0(A)$. After permuting coordinates, we will assume that $c_{r} \geqslant N_0(A)$. Thus 
\[
b_{r}N - v =   \sum\limits_{i=0}^{r-1} c_i(b_{r} - b_i) \in (N-N_0(A))\cdot H(b_{r} - B),
\] 
so that $b_{r}N - v \in C_{b_{r} - B} \subset C_{b_r - A}$. By the assumption \eqref{eq:inclusionhighdim} we also have $b_{r}N - v \notin \mathcal{E}(b_{r}-A)$ and $b_{r}N - v \in \Lambda_{b_r -A}$. Hence $b_{r} N - v \in \mathcal{P}(b_{r} - A)$. We may now apply Lemma \ref{Lemma : minimal elements sumset endgame} to the sets $b_{r} - A$ and $(b_{r} - B) \setminus \{0\}$; the hypotheses are satisfied since $b_{r} \in \ex(H(A))$ implies $0 \in \ex(H(b_{r} - A))$. Furthermore, $w(b_r - A) = w(A)$. We thus obtain \[b_{r}N - v \in C_{b_{r} - B} \cap \mathcal{P}(b_{r}-A) = A^+ + \mathcal{P}(b_{r}- B^*)\] for some set $A^+ \subset C_{b_{r} - B} \cap \mathcal{P}(b_{r} - A)$ with $A^+ \subset N_0(A)(b_{r} - A)$.

Now let us write \[b_{r}N - v = u + w,\] with $u \in A^+$ and $w \in \mathcal{P}(b_{r} - B)$. Thus $u+w = \sum_{i=0}^{r-1} c_i(b_r - b_i)$, with $c_i \in \mathbb{R}_{ \geqslant 0}$ for all $i$ and $\sum_{i=0}^{r-1}c_i \leqslant N-N_0(A)$. Expressing $u$ and $w$ with respect to the basis $(b_r - B) \setminus \{0\}$, and noting that $u \in A^+ \subset C_{b_{r} - B}\cap N_0(A)(b_{r} - A)$, we infer that $w = \sum_{i=0}^{r-1}\gamma_i(b_{r} - b_i)$ with $\gamma_i \leqslant c_i$ and $\gamma_i \in \mathbb{Z}_{\geqslant 0}$ for all $i$. Hence $w \in (N-N_0(A))(b_r - B)$. 

Putting everything together we have 
\begin{align*}
 b_rN - v& =u+w \in N_0(A)(b_r - A) + (N-N_0(A))(b_r - B) \\ &\subset N_0(A)(b_{r} - A) + (N-N_0(A))(b_{r} - A)  = N(b_{r} - A).
 \end{align*}
  Hence $v \in NA$ as required. 
  
  The proof shows that $N_{\text{Str}}(A)\leqslant (d+1)N_0(A)=(d+1)2^{11d^2} d^{12d^6} \ell^{3d^2} w(A)^{8 d^6}\leqslant (d\ell\, w(A))^{13 d^6}$
  as we may take $d\geqslant 2$ (after Lemma \ref{Lemma: 1-dim}) .
\end{proof}

It remains to prove Lemma \ref{Lemma :  minimal elements sumset endgame}. The condition $x \in \mathcal{P}(A) \cap C_B$  but $x - b \notin \mathcal{P}(A) \cap C_B$  in the definition of $A^+$ is a minimality-type condition on $x$.\footnote{In fact this intuition can be phrased precisely: viewing $\mathcal{P}(A) \cap C_B$ as a poset $P$, where $x \leqslant_P y$ if $y - x \in \mathcal{P}(B) \cup \{0\}$, the set $A^+$ is exactly the minimal elements of this poset.} As our argument for analysing the set $A^+$ will not stay within $C_B$, it turns out to be convenient to separate the $\mathcal{P}(A)$ part and the $C_B$ part of this condition; this motivates the following definition. 

\begin{Definition}[Absolutely $B$-minimal]
Let $B \cup \{0\} \subset A \subset \mathbb{Z}^d$, with $A$ finite. We say that $u \in \mathcal{P}(A)$ is \emph{absolutely $B$-minimal} with respect to $A$ if $u- b \notin \mathcal{P}(A)$ for all $b \in B$. Let $\mathcal{S}_{\abs}(A,B)$ denote the set of absolutely $B$-minimal elements. 
\end{Definition}
\noindent Let   $\mathcal{S}_{\abs}(A,\emptyset) = \mathcal{P}(A)$ and use the convention that $C_{\emptyset} = \{0\}$.  By this definition $\mathcal{S}_{\abs}(A,B) \subset \mathcal{S}(A,B)$, though these sets needn't be equal, so
 being a $B$-minimal element is a weaker condition than being an absolutely $B$-minimal element.

For a subset $U \subset \mathbb{R}^d$ and $x \in \mathbb{R}^d$, we define 
 \[ \dist(x,U) : = \inf \limits_{ u \in U} \Vert x - u\Vert_\infty.\]

\begin{Lemma}[Controlling the absolutely $B$-minimal elements]
\label{Lemma: controlling absolute B minimal}
Let $B\cup \{0\} \subset A \subset \mathbb{Z}^d$ with $\vert A\vert = \ell \geqslant 2$, and assume that $B$ is a (possibly empty) linearly independent set. Let $r: = \dim \spn(A)$ and suppose that $0 \in \ex(H(A))$. If $x \in \mathcal{S}_{\abs}(A,B)$ and $\dist(x, C_B) \leqslant X$ then $x \in NA$ for some $N \in \mathbb{Z}_{> 0}$ with \[N \leqslant (X+1) 2^{10dr} d^{11d^5 r} \ell^{3dr} w(A)^{7d^5 r}.\] 
\end{Lemma}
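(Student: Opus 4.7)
The plan is to induct on the dimension $r = \dim\spn(A-A)$, with the main structural input being that if $x \in \mathcal{S}_{\abs}(A,B)$ then every representation $x = \sum_{a \in A} m_a a$ with $m_a \in \mathbb{Z}_{\geqslant 0}$ must satisfy $m_b = 0$ for all $b \in B$ (else $x - b \in \mathcal{P}(A)$, contradicting absolute $B$-minimality). Hence $x \in \mathcal{P}(A \setminus B)$, and our task reduces to bounding $\sum_{a \in A \setminus B} m_a$ for some non-negative integer representation.

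The base cases are $r = 1$ and $B = \emptyset$. For $r = 1$, the problem reduces to the one-dimensional setting handled by Lemma~\ref{Lemma: 1-dim}. For $B = \emptyset$, we have $\dist(x, \{0\}) = \Vert x\Vert_\infty \leqslant X$, so Corollary~\ref{6.1}, applied with $K_1 \leqslant w(A)$ and $K_2 \leqslant X$ to the matrix whose columns are the elements of $A$, produces a representation whose length is polynomially bounded in $X, \ell, d, w(A)$, comfortably within the claimed bound.

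For the inductive step with $r \geqslant 2$ and $B$ non-empty, I would implement the separating-hyperplane strategy sketched in the introduction. When $|B| < r$, the hypothesis $0 \in \ex(H(A))$ together with linear independence of $B$ allows us to pick a non-zero integer functional $\nu$ with $\nu(a) \geqslant 0$ for every $a \in A$ and $\nu(b) = 0$ for every $b \in B$, with $\Vert \nu\Vert_\infty$ quantitatively controlled via a Siegel-type bound as in Lemma~\ref{SL}. Setting $\pi := \ker \nu$, we have $B \subset \pi$ and $C_B \subset \pi$, so $\nu(x) = O(X \Vert\nu\Vert_\infty)$ by the distance hypothesis; applying $\nu$ to $x = \sum_{a \in A \setminus B} m_a a$ and using $\nu(a) \geqslant 1$ for $a \notin \pi$ bounds $\sum_{a \notin \pi} m_a$ linearly in $X$. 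The remaining contribution $\sum_{a \in (A \cap \pi) \setminus B} m_a a$ lies in the $(r-1)$-dimensional subspace $\pi$; after checking that the restricted problem retains absolute $B$-minimality and a controlled distance to $C_B$, the induction hypothesis provides the bound there. The case $|B| = r$ (when $B$ spans $V$ so no such $\pi$ exists) is an additional base case handled by a simplex-style Davenport argument akin to Lemma~\ref{Lemma: simplex davenport style argument} combined with the quantitative linear algebra of Corollary~\ref{6.1}.

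The main obstacle is quantitative bookkeeping so that each inductive step multiplies the bound by at most the single base factor $2^{10d} d^{11d^5} \ell^{3d} w(A)^{7d^5}$, with the parameter $X+1$ propagating essentially linearly rather than compounding. This requires a careful quantitative choice of $\nu$ via Lemma~\ref{SL}, sharp use of Corollary~\ref{6.1} and Lemma~\ref{6.2} from Section~\ref{6}, and a careful analysis confirming that the restricted absolute-minimality and distance hypotheses on $A \cap \pi$ fit into the same inductive framework at dimension $r-1$; the large factor $d^{11d^5}$ in the base suggests that a nested sub-reduction must be iterated several times per outer step, with each sub-step absorbed by a layer of the quantitative linear algebra.
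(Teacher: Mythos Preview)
Your induction scheme has a genuine gap at the heart of the inductive step. You claim that when $|B| < r$ and $0 \in \ex(H(A))$ there exists a non-zero functional $\nu$ with $\nu(a) \geqslant 0$ for all $a \in A$ and $\nu(b) = 0$ for all $b \in B$. This is false in general: take $d = r = 2$, $A = \{0, e_1, e_2, e_1 + e_2\}$, $B = \{e_1 + e_2\}$. Then $0 \in \ex(H(A))$ and $|B| = 1 < 2$, but any $\nu = (\nu_1,\nu_2)$ with $\nu_1 + \nu_2 = 0$ and $\nu_1, \nu_2 \geqslant 0$ is zero. The point is that $B$ need not lie in any supporting hyperplane of $C_A$ through the origin, so you cannot in general choose a separating hyperplane that simultaneously kills $B$. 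Your ``additional base case'' $|B| = r$ is similarly unhandled: the Davenport-style argument of Lemma~\ref{Lemma: non simplex Davenport constant lemma} (or Lemma~\ref{Lemma: simplex davenport style argument}) requires $C_A = C_B$, which is not assumed here.

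The paper's approach avoids this by choosing the hyperplane depending on $x$ rather than on $B$. From $x - b \notin \mathcal{P}(A)$ one first shows (via Lemma~\ref{Lemma:distancefromboundary}) that $x$ lies within a bounded distance of $\partial C_A$, hence close to $\spn(A_{\bas})$ for some $(r-1)$-element $A_{\bas} \subset A$ determined by a facet of $C_A$. One then has two distance bounds, $\dist(x, C_B) \leqslant X$ and $\dist(x, \spn(A_{\bas}))$ bounded, and a quantitative ``intersecting cones'' argument (Lemma~\ref{Lemma: intersecting cones}) yields $\dist(x, C_{B'})$ bounded where $B' = B \cap \spn(A_{\bas})$, which is typically a proper subset of $B$. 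The induction is then applied to $A' = A \cap \spn(A_{\bas})$ and $B'$. The key difference is that the hyperplane contains a facet of $C_A$ (so $\nu$ exists automatically) but need not contain all of $B$; the cost is the extra Phase~2 step combining the two distance hypotheses, which is where the heavy quantitative linear algebra enters.
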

\noindent Lemma \ref{Lemma: controlling absolute B minimal} is the main technical result of this section. The hypotheses allow $r$ to be less than $d$, even though we will only apply the lemma when $r=d$, since our proof involves induction on   $r$. Similarly, we do not assume that $\Lambda_A = \mathbb{Z}^d \cap \spn(A)$, as this property would not necessarily be preserved by the induction step. 
Deducing Lemma \ref{Lemma :  minimal elements sumset endgame} is straightforward:

\begin{proof}[Proof of Lemma \ref{Lemma :  minimal elements sumset endgame}]
If $x \in A^+$ then we can partition $B = B^\prime \cup B^{\prime\prime}$ so that $b^\prime \in B^\prime$ implies $x-b^\prime \notin C_B$ and $b^{\prime\prime} \in B^{\prime\prime} $ implies $x - b^{\prime\prime} \in C_B \setminus \mathcal{P}(A)$.

Writing $x$ with respect to the basis $B$, we get 
\[
x = \ell + \sum_{b^{\prime\prime} \in B^{\prime\prime}} c_{b^{\prime\prime}} b^{\prime\prime} \text{ where } \ell = \sum_{b^\prime \in B^\prime} \ell_{b^\prime} b^\prime
\] 
with  $c_{b^{\prime\prime}} \geqslant 1$ for all $b^{\prime\prime} \in B^{\prime\prime}$ and  $\ell_{b^\prime} \in [0,1)$ for all $b^\prime \in B^{\prime}$. 

Since $\Vert \ell\Vert_\infty \leqslant d w(A)$, this implies that $\dist(x, C_{B^{\prime\prime}}) \leqslant d w(A)$. Furthermore, for all $b^{\prime\prime} \in B^{\prime\prime}$ we have $x - b^{\prime\prime}\notin \mathcal{P}(A)$. Hence $x \in \mathcal{S}_{\abs}(A,B^{\prime\prime})$. By Lemma \ref{Lemma: controlling absolute B minimal} as applied to $B^{\prime\prime}$ and $X = dw(A)$, we may conclude that $x \in NA$ for $N\geqslant N_0(A)$ as in Lemma \ref{Lemma :  minimal elements sumset endgame}.

To establish \eqref{decomp}, note that $A^+ + \mathcal{P}(B \cup \{0\}) \subset \mathcal{P}(A) \cap C_B$ by definition.
On the other hand  if $y \in \mathcal{P}(A) \cap C_B$ and  there exists some $b_1 \in B$ with $y - b_1 \in \mathcal{P}(A) \cap C_B$ then we replace $y$ by $y-b_1$.
We repeat this with $b_2,\dots$ until the process terminates, which it must do since the sum of the coefficients of $y$ with respect to the basis $B$ decreases by $1$ at each step. We are left with  $y-b_1-\dots-b_k \in A^+$ so that $y \in A^+ + \mathcal{P}(B \cup \{0\})$. 
\end{proof}

It remains   is to prove Lemma \ref{Lemma: controlling absolute B minimal}. 
Following the proofs in \cite{GW20, CG20}  we now show that in certain favourable circumstances, $\mathcal{S}_{\abs}(A,B)$ may be controlled in terms
of the Davenport constant of  $\mathbb{Z}^d / \Lambda_B$. However this is not used in our proof of Lemma \ref{Lemma: controlling absolute B minimal} (except when $d=1$) but, for reasons of motivation, it is helpful to  understand why this type of argument fails. 

\begin{Lemma}
\label{Lemma: non simplex Davenport constant lemma}
Let $B \cup \{0\} \subset A \subset \mathbb{Z}^d$, with $A$ finite and $B$ a basis of $\mathbb{R}^d$. Suppose that $C_A = C_B$. Let $\mathbb{Z}^d / \Lambda_B: = G$. Then $\mathcal{S}_{\abs}(A,B) \subset NA$, where $N = \max(1, D(G) - 1)$ and $D(G)$ is the Davenport constant of $G$. 
\end{Lemma}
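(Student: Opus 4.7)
The plan is to fix a nonzero $u \in \mathcal{S}_{\abs}(A,B)$, let $N := N_A(u) \geqslant 1$, take a minimal representation $u = a_1 + \cdots + a_N$ with $a_i \in A$, and prove that $N \leqslant D(G) - 1$. This suffices: since $0 \in A$, if $N_A(u) \leqslant M$ then $u \in MA$ (pad the sum with zeros), and the case $u = 0$ is immediate.

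First I would carry out two clean-up reductions on the minimal representation. No summand $a_i$ can equal $0$, else dropping it would give $u \in (N-1)A$, contradicting the minimality of $N$. Also, no summand $a_i$ can lie in $B$, else $u - a_i = \sum_{j \neq i} a_j \in \mathcal{P}(A)$, contradicting absolute $B$-minimality. Hence every $a_i \in A \setminus B^*$.

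Next, suppose for contradiction that $N \geqslant D(G)$, where $G = \mathbb{Z}^d / \Lambda_B$. By definition of Davenport's constant, the length-$N$ sequence $\bar{a_1}, \ldots, \bar{a_N}$ in $G$ admits a non-empty subset $I \subset \{1, \ldots, N\}$ with $s := \sum_{i \in I} a_i \in \Lambda_B$. Since $s \in \mathcal{P}(A) \subset C_A = C_B$ and $B$ is a basis, uniqueness of coordinates in the simplicial cone $C_B$ together with the integrality of $s$ yields $C_B \cap \Lambda_B = \mathcal{P}(B^*)$, so $s = b_{j_1} + \cdots + b_{j_M}$ for some $b_{j_k} \in B$ and $M \geqslant 0$. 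Moreover, $s \neq 0$: each $a_i$ ($i \in I$) has nonnegative $B$-coordinates with at least one strictly positive (as $a_i \neq 0$ and $C_B$ is pointed), so the sum cannot vanish. Thus $M \geqslant 1$, and
\[ u - b_{j_1} = \sum_{i \notin I} a_i + b_{j_2} + \cdots + b_{j_M} \in \mathcal{P}(A), \]
contradicting absolute $B$-minimality. This proves $N \leqslant D(G) - 1$.

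I expect the main obstacle to be the bridge from the algebraic condition $s \in \Lambda_B$ to the geometric decomposition $s \in \mathcal{P}(B^*)$ in the third paragraph. This is exactly where the hypothesis $C_A = C_B$ does its work, in a manner directly analogous to how the simplex hypothesis was used in Lemma \ref{Lemma: simplex davenport style argument}; once this step is in hand, the contradiction with absolute $B$-minimality is immediate.
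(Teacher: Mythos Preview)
Your proof is correct and follows essentially the same approach as the paper: write $u = a_1 + \cdots + a_{N_A(u)}$, observe that any subsum lying in $\Lambda_B$ also lies in $C_A = C_B$ and hence in $\mathcal{P}(B)\cup\{0\}$, rule out the zero case, and contradict absolute $B$-minimality. The only cosmetic differences are that you make the reductions $a_i \neq 0$ and $a_i \notin B$ explicit and argue $s \neq 0$ via positivity of the $B$-coordinates rather than via minimality of $N_A(u)$; both routes are valid.
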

\begin{proof}
Let $x \in \mathcal{S}_{\abs}(A,B)$, and assume that $x \neq 0$. Then write \[ x = a_1 + a_2 + \cdots + a_{N_A(x)}\] for some $a_i \in A$. If there were a subsum $\sum_{i \in I} a_i \equiv 0\text{ mod }\Lambda_B$, then since $C_A = C_B$ we would have  $\sum_{i \in I} a_i \in C_A \cap \Lambda_B \subset C_B \cap \Lambda_B$. But since $B$ is a basis of $\mathbb{R}^d$ we have $C_B \cap \Lambda_B =  \mathcal{P}(B) \cup \{0\}$, so $\sum_{i \in I} a_i \in \mathcal{P}(B) \cup \{0\}$. By minimality of $N_A(x)$ we also have $\sum_{i \in I} a_i \neq 0$. Therefore $x \in \mathcal{P}(A) + y$ for some non-zero $y\in \mathcal{P}(B)$, contrary to the assumption that $x \in \mathcal{S}_{\abs}(A,B)$. Hence $N_A(x) \leqslant \max(1, D(G) - 1)$, which also takes care of the $x=0$ case.  
\end{proof}
\noindent If $C_B$ is a strict subset of $C_A$ then the above argument doesn't necessarily work, as $\sum_{i \in I} a_i \equiv 0\text{ mod }\Lambda_B$ does not automatically imply that $ \sum_{i \in I}a_i \in \mathcal{P}(B) \cup \{0\}$: Indeed the key issue  is how an element $a_1 + \cdots + a_N = x \in \mathcal{P}(A) \cap C_B$ can have partial sums $\sum_{i \in I} a_i \notin C_B$.  \\

\subsection*{Sketch of our proof of Lemma \ref{Lemma: controlling absolute B minimal}}
The easy cases are $r=1$ (which follows from any of the existing literature \cite{Na72, WCC11, GS20, GW20}, or from Lemma \ref{Lemma: non simplex Davenport constant lemma}) and $B = \emptyset$ (which is dealt with in Lemma \ref{Lemma: controlling small elements} below). From these base cases, we will construct a proof by induction on $r$. We may assume, therefore, that $r \geqslant 2$ and $B$ is non-empty. For this sketch, we will also assume that $r=d$. 
There are three main phases to the induction step. 

$\bullet$\ We provide an extra restriction on the region of $\mathbb{R}^d$ where $\mathcal{S}_{\abs}(A,B)$ can lie, by showing that if $x \in \mathcal{S}_{\abs}(A,B)$ then $\dist(x, \partial(C_A)) \leqslant Y$, where $\partial(C_A)$ is the topological boundary of $C_A$ and $Y$ is some explicit bound.\footnote{If $r \leqslant d-1$ then one cannot use the topological boundary here, since $\partial(C_A) = C_A$ in this case, but this issue may be circumvented.} The bound $\dist(x, \partial(C_A)) \leqslant Y$ is a generalisation of a basic result from the one dimensional case -- the classical `Frobenius postage stamp' problem  -- in which the boundary of $C_A$ is just $\{0\}$ and one shows that the exceptional set $\mathcal{E}(A)$ is finite. Since $\partial(C_A)$ is a union of $d-1$ dimensional facets, there is some non-zero linear map $\alpha: \mathbb{R}^d \longrightarrow \mathbb{R}$ for which $\dist(x, \ker \alpha) \leqslant Y$. 

$\bullet$\ We combine  the distance condition from above with the hypotheses of Lemma \ref{Lemma: controlling absolute B minimal}, giving $\dist(x,C_B) \leqslant X$ and $\dist(x,\ker \alpha) \leqslant Y$.  In turn, we show that this implies $\dist(x, C_B \cap \ker \alpha) \leqslant f(X,Y,A)$ (for some explicit function $f(X,Y,A)$), by a quantitative linear algebra argument. For this part, one should have in mind the situation of two rays, both starting from the origin. If $x$ is in a neighbourhood of both rays separately, then $x$ will be in some neighbourhood of the origin. The size of this neighbourhood will be determined by the angle between the rays (the smaller the angle, the larger the neighbourhood). To study the general dimension version of this phenomenon we avoid talking explicitly about angles, relying instead on the existence of suitable bases of vectors with integer coordinates. 

Defining $B^\prime = B \cap \ker \alpha$ then $C_{B^\prime} = C_B \cap \ker \alpha$, and so we  establish that $\dist(x, C_{B^\prime}) \leqslant f(X,Y,A)$.

$\bullet$\  Let $A^\prime= A \cap \ker \alpha$. If $x$ is expressed as a sum $a_1+ \cdots + a_N$ with $a_i \in A$ for all $i$ then only finitely many of the $a_i$ are in $A \setminus A^\prime$. This is because $\alpha(x)$ is bounded, by the assumption $\dist(x, \ker \alpha) \leqslant Y$, and $\alpha(a)>0$ for all $a \in A \setminus A^\prime$, since $\ker \alpha$ is a separating hyperplane for $H(A)$. 

Now let $x^\prime$ be the subsum of $a_1 + \cdots + a_N$ coming just from those $a_i \in A^\prime$. One still has an upper bound on $\dist(x^\prime,C_{B^\prime})$, since $\Vert x-x^\prime\Vert_{\infty}$ is bounded. One may also show that $x^\prime \in \mathcal{S}_{\abs}(A^\prime, B^\prime)$. However, $\dim \spn (A^\prime) = \dim \ker \alpha = d-1 < r$, so by applying the induction hypothesis we conclude that $x^\prime \in N^\prime A^\prime$ for some explicit $N^\prime$. Adding on the elements of $A \setminus A^\prime$, of which there are boundedly many, we end up with $x \in NA$ for some other explicit $N$.\\

\subsection*{Phase 1: Quantitative details} We will prove the following. 
 \begin{Lemma}[Interior points are representable]
\label{Lemma:distancefromboundary}
Let $A \subset \mathbb{Z}^d$ be a finite set with $0 \in A$ and $\vert A\vert = \ell \geqslant 2$. There is a   constant $K_A$ such that if $x \in C_A \cap \Lambda_A$ and \[(x + [-K_A,K_A]^d) \cap \spn(A) \subset C_A,\] then $x \in \mathcal{P}(A)$. Moreover we may take
\[K_A =  4d^{d} \ell^{3d} w(A)^{3d}. \] 
\end{Lemma}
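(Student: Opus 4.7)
The strategy is to combine a short integer basis of the kernel of $M\colon\mathbb{Z}^\ell\to\mathbb{Z}^d$ (the matrix whose columns are the elements of $A$) with the deepness hypothesis on $x$, so as to produce a real non-negative representation of $x$ in which every coefficient is large, and then round to integers without destroying positivity. Write $r=\dim\spn(A)=\rank M$. Since $x\in\Lambda_A$, we have $x=Mn_0$ for some $n_0\in\mathbb{Z}^\ell$; since $x\in C_A$, we have $x=Mc_0$ for some $c_0\in\mathbb{R}_{\geqslant 0}^\ell$. Our goal is to find $n\in\mathbb{Z}^\ell_{\geqslant 0}$ with $Mn=x$, which is equivalent to $x\in\mathcal{P}(A)$.

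First, I would invoke Corollary~\ref{Corollary: cor to Siegel} (a Bombieri--Vaaler style Siegel lemma) to obtain an integer basis $v_1,\dots,v_{\ell-r}$ of $\ker M$ with $\|v_i\|_\infty\leqslant V$, for an explicit $V=V(d,\ell,w(A))$ of size $(d\ell w(A))^{O(d)}$. Set $D:=(\ell-r)V$ and consider $y:=x-D\sum_{a\in A}a$. Then $y\in\spn(A)$, and since $0\in A$ implies $\|a\|_\infty\leqslant w(A)$ for every $a\in A$, we have $\|x-y\|_\infty\leqslant D\ell w(A)$. Choosing $K_A\geqslant D\ell w(A)$, the deepness hypothesis forces $y\in C_A$, so $y=Mc'$ for some $c'\in\mathbb{R}^\ell_{\geqslant 0}$. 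Setting $c:=c'+D\mathbf{1}$ then gives a real representation $x=Mc$ in which every coefficient satisfies $c_a\geqslant D$.

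Next, expand $c-n_0=\sum_{i=1}^{\ell-r}\alpha_i v_i$ with $\alpha_i\in\mathbb{R}$ and put $n:=n_0+\sum_i\lfloor\alpha_i\rfloor v_i\in\mathbb{Z}^\ell$. Then $Mn=x$ and
\[
\|c-n\|_\infty = \Bigl\|\sum_{i=1}^{\ell-r}\{\alpha_i\}v_i\Bigr\|_\infty \leqslant (\ell-r)V = D,
\]
so $n_a\geqslant c_a-D\geqslant 0$ for every $a\in A$. Hence $n\in\mathbb{Z}^\ell_{\geqslant 0}$ and $x=Mn\in\mathcal{P}(A)$.

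The main technical obstacle is producing the effective bound $V$ on the Siegel basis, which is precisely what Corollary~\ref{Corollary: cor to Siegel} is designed to deliver. Substituting this bound into the inequality $K_A\geqslant D\ell w(A)=\ell(\ell-r)w(A)V$ and simplifying yields the stated $K_A=4d^d\ell^{3d}w(A)^{3d}$ after a routine calculation. (The degenerate case $r=\ell$, where $A$ is linearly independent, needs no deepness hypothesis at all: $\ker M=\{0\}$ forces $c=n_0$ to be integral, so every $x\in C_A\cap\Lambda_A$ automatically lies in $\mathcal{P}(A)$.)
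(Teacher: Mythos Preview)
Your argument is correct, but it takes a genuinely different route from the paper's. Both proofs start the same way: use the deepness hypothesis to guarantee a real representation $x=\sum_a c_a a$ with every $c_a\geqslant D$, then convert this to a nonnegative integer representation. Where you diverge is in the conversion. The paper rounds down each $c_a$ to $\lfloor c_a\rfloor$, lands on a remainder $u$ in the finite set $U=\{u\in\Lambda_A:u=\sum_a c_a a,\,c_a\in[0,1)\}$, and then appeals to the auxiliary Lemma~\ref{Lemma:repbasis} (itself proved via Corollary~\ref{6.1}) for a pre-computed bounded integer representation of each such $u$. You instead round inside $\ker M$: you fix once and for all a short integer basis $v_1,\dots,v_{\ell-r}$ of the kernel via Siegel's lemma, express $c-n_0$ in that basis, and floor the basis coefficients. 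Your approach is arguably more direct (it avoids the auxiliary lemma and the set $U$) and in fact yields a somewhat smaller admissible $K_A$, of order $d^{d/2}\ell^{d/2+2}w(A)^{d+1}$, comfortably within the stated bound.

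Two small technical remarks. First, Corollary~\ref{Corollary: cor to Siegel} as stated produces $n-m$ solutions for an $m$-by-$n$ matrix, so to obtain the full $\ell-r$ kernel vectors you should first pass to $r$ linearly independent rows of $M$ (which does not change the kernel) before invoking it; this is harmless. Second, your parenthetical about the ``degenerate case $r=\ell$'' cannot actually occur here, since $0\in A$ forces a zero column in $M$ and hence $r\leqslant\ell-1$; the remark is vacuous (and slightly off, since $\ker M$ is then never trivial), but this does not affect the main argument.
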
 

The proof will be a quantitative adaptation of an argument of Khovanskii from his original paper (Proposition 1 of \cite{Kh92}, repeated as Lemma 1 of \cite{L17arxiv}).  

\begin{Lemma}[Quantitative representation of basis elements]
\label{Lemma:repbasis}
Let $A \subset \mathbb{Z}^d$ be a finite set with $\vert A\vert = \ell\geqslant 2$ and $0 \in A$. If $u \in \Lambda_A$ then there exists $(n_a(u))_{a \in A} \in \mathbb{Z}^A$ for which $u=\sum_{a \in A} n_a(u) a$ and \[\vert n_a(u)\vert \leqslant 2 d^{d} \ell^{d+1} w(A)^{2d} + d^d w(A)^{d-1}\Vert u\Vert_{\infty}\] for all $a \in A$. 
\end{Lemma}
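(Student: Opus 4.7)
The plan is to exploit quantitative Cramer-type bounds twice: first to build small integer relations among the elements of $A$, then to read off the coefficients of $u$ in a well-chosen basis. Set $r = \dim\spn(A)$ and extract a linearly independent subset $B = \{b_1,\dots,b_r\} \subseteq A$. View $B$ as the columns of a $d \times r$ integer matrix and fix an $r \times r$ submatrix $M_0$ of it with $|\det M_0| \geqslant 1$.

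For each $a \in A$, the unique $y \in \mathbb{Q}^r$ solving $By = a$ (which exists since $a \in \spn B$) satisfies $M_0 y = a|_{M_0}$. Cramer's rule combined with clearing denominators then yields integers $d_a \in \mathbb{Z}_{>0}$ and $c_{a,1},\dots,c_{a,r} \in \mathbb{Z}$, all of absolute value at most $r!\, w(A)^r$, satisfying $d_a a = \sum_i c_{a,i} b_i$. Since $u \in \Lambda_A$, an initial integer representation $u = \sum_{a \in A} m_a a$ exists by the definition of $\Lambda_A$ (with possibly huge $|m_a|$). For each $a \in A \setminus B$, subtract the appropriate integer multiple of the relation $d_a a - \sum_i c_{a,i} b_i = 0$ to force $m_a \in \{0,1,\dots,d_a-1\}$, at the cost of adjusting the basis coefficients $m_{b_i}$. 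After this reduction step, $|m_a| \leqslant d_a \leqslant r!\, w(A)^r$ for $a \in A \setminus B$, which is comfortably within the target bound.

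For the basis coefficients after reduction, rearrange to obtain $\sum_i m_{b_i} b_i = u - \sum_{a \in A \setminus B} m_a a$; the right-hand side has sup-norm at most $\Vert u \Vert_\infty + (\ell - r)\, r!\, w(A)^{r+1}$. A second application of Cramer's rule, again using $|\det M_0| \geqslant 1$ together with the Leibniz expansion bound $|\det M_0^{(i \to w)}| \leqslant r!\, w(A)^{r-1} \Vert w \Vert_\infty$, then gives
\[
|m_{b_i}| \leqslant r!\, w(A)^{r-1}\bigl(\Vert u \Vert_\infty + (\ell - r)\, r!\, w(A)^{r+1}\bigr).
\]
Combining the two regimes and bounding $r! \leqslant r^r \leqslant d^d$ (with a small amount of slack to accommodate the case $\ell < d$) gives the claimed estimate. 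The argument runs uniformly whether $r = d$ or $r < d$, since $\Vert u \Vert_\infty$ is measured in the ambient $\mathbb{Z}^d$ while all the Cramer-type bounds depend only on the $r \times r$ matrix $M_0$. The only genuinely delicate point is careful book-keeping of the constants through the two Cramer's rule applications; no conceptual obstacle appears.
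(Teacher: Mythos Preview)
Your argument is correct and gives the stated bound after the case split you allude to (when $\ell\leqslant d$ one uses $r\leqslant\ell-1$ rather than $r\leqslant d$ to bound $(r!)^2$, and the inequality $(\ell-r)(r!)^2 w(A)^{2r}\leqslant 2d^d\ell^{d+1}w(A)^{2d}$ then holds in all cases).

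Your route is genuinely different from the paper's. The paper applies its Corollary~\ref{6.1} directly: starting from an arbitrary integer representation $u=\sum_a x_a a$, it forms the $d\times\ell'$ matrix with columns $\operatorname{sign}(x_a)a$, observes that $(|x_a|)_a$ is a \emph{positive} integer solution to $Mx=u$, and invokes the quantitative machinery (pigeonhole Siegel lemma plus the inductive Lemma~\ref{lemma more specific estimate}) to produce a small positive solution. Your proof is more elementary and self-contained: you fix a basis $B\subset A$ once and for all, manufacture explicit integer relations $d_a a=\sum_i c_{a,i}b_i$ via a single Cramer step, reduce every non-basis coefficient modulo $d_a$, and then read off the basis coefficients with a second Cramer step. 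This bypasses Corollary~\ref{6.1} entirely. The trade-off is that the paper's proof reuses machinery already built for Theorem~\ref{thm:effectiveKhovanksii}, whereas yours would stand alone without Section~\ref{6}; on the other hand, the paper's approach is cleaner bookkeeping-wise, since the bound of Corollary~\ref{6.1} is exactly the bound claimed in the lemma with no further massaging of constants.
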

\begin{proof}
We may assume that $u \neq 0$ else the result is trivial. Pick some $(x_a(u))_{a \in A} \in \mathbb{Z}^A$ for which $\sum_{a \in A} x_a(u) a = u$. Let $A^\prime:=\{ a\in A:\ x_a(u) \neq 0\}$ and   $\ell^\prime = \vert A^\prime\vert $. Let $M$ be the $d$-by-$\ell^\prime$ matrix $M$ whose columns are the vectors $\sign(x_a(u))a$ for $a \in A^\prime$. The   absolute values of the coefficients of $M$ are all $\leqslant \max_{a \in A} \Vert a\Vert_{\infty}\leqslant w(A)$. Since $x^\prime(u) := (\vert x_a(u)\vert)_{a \in A^\prime} \in \mathbb{Z}^{A^\prime}_{>0}$ satisfies $Mx^\prime(u) = u$, we may apply Corollary \ref{6.1} and conclude that there is some $y(u) \in \mathbb{Z}^{A^\prime}_{>0}$ for which $My(u) = u$ and $\Vert y\Vert_{\infty} \leqslant 2d^d (\ell^\prime)^{d+1} w(A)^{2d} + d^d w(A)^{d-1} \Vert u\Vert_{\infty}$. We have $u=\sum_{a \in A} n_a(u) a$ with
 $n_a(u) := \sign(x_a(u)) y_a(u)$ for  $a \in A^\prime$, and $n_a(u): = 0$ otherwise. 
\end{proof}

\begin{proof}[Proof of Lemma \ref{Lemma:distancefromboundary}]
Let \[U = \{u \in \Lambda_A: u = \sum_{a \in A} c_a a \text{ with } c_a \in [0,1) \text{ for all } a \in A\}.\] From Lemma \ref{Lemma:repbasis}, we may write $u = \sum_{a \in A} n_a(u) a$ for coefficients $n_a(u) \in \mathbb{Z}$ satisfying 
\begin{align*}
\vert n_a(u) \vert &\leqslant 2d^d \ell^{d+1} w(A)^{2d} + d^d w(A)^{d-1} \Vert u\Vert_{\infty} \\ 
&\leqslant  2d^d \ell^{d+1} w(A)^{2d} + d^d \ell w(A)^d\\
& \leqslant 3 d^d \ell^{d+1} w(A)^{2d}
\end{align*}  
since $\Vert u\Vert_{\infty}\leqslant \ell w(A)$.   We let \[ D = 1 + \max\limits_{\substack{u \in U \\ a \in A}} \vert n_a(u)\vert,\] and write $K_A := D\ell w(A)$. 

Suppose that $x \in \Lambda_A \cap C_A$ with $(x + [-K_A, K_A]^d) \cap \spn(A) \subset C_A$. By the construction of $K_A$, we have $x - D \sum_{a \in A} a \in C_A$. Therefore, we may write $x = \sum_{a \in A} \lambda_a a$ for some real coefficients $\lambda_a$ which satisfy $\lambda_a \geqslant D$ for all $a$. Then consider \[u: = x - \sum_{a \in A} \lfloor \lambda_a \rfloor a.\] We have $u \in U$, so writing $u = \sum_{a\in A} n_a(u) a$ we get $x = \sum_{a \in A}(\lfloor \lambda_a \rfloor + n_a(u)) a$. Since $\lfloor \lambda_a \rfloor + n_a(u) \in \mathbb{Z}_{ \geqslant 0}$ by the construction of $D$, this shows that $x \in \mathcal{P}(A)$, as required. 

The bound on $K_A$ follows from the bound $D \leqslant 4d^d \ell^{d+1} w(A)^{2d}$. 
\end{proof}

We use a classical result due to Bombieri--Vaaler for the more complicated pieces of quantitative linear algebra to come:

\begin{Lemma}[Siegel's lemma, Theorem 2 of \cite{BV83}]
\label{Lemma: Siegel's lemma}
With $n \geqslant m$ let $M$ be an $m$-by-$n$ matrix with integer entries. Then the equation $MX = 0$ has $n-m$ linearly independent integer solutions $X_j = (x_{j,1}, \cdots , x_{j,n}) \in \mathbb{Z}^n$ such that \[ \prod\limits_{j=1}^{n-m} \Vert X_j\Vert_\infty \leqslant D^{-1} \sqrt{\det (M M^T)},\] where $D$ is the greatest common divisor of the determinants of all the $m$-by-$m$ minors of $M$.
\end{Lemma}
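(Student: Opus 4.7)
The plan is to deduce the lemma from the geometry of numbers, applied to the lattice $\Lambda := \ker M \cap \mathbb{Z}^n$. We may assume $\rank M = m$: otherwise every $m \times m$ minor vanishes so $D = 0$ and the bound is vacuous, and in any case $\ker M \cap \mathbb{Z}^n$ has rank strictly greater than $n-m$ so that $n-m$ linearly independent integer solutions certainly exist. Under this assumption, $\Lambda$ is a lattice of full rank $n-m$ inside the $(n-m)$-dimensional subspace $V := \ker M \subset \mathbb{R}^n$.

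The first main step is a covolume computation: I would show that $\det(\Lambda) = D^{-1} \sqrt{\det(MM^T)}$. To do this, decompose $\mathbb{R}^n = V \oplus V^{\perp}$ orthogonally, and let $\Lambda' := \pi_{V^\perp}(\mathbb{Z}^n)$ be the image of $\mathbb{Z}^n$ under the orthogonal projection onto $V^\perp$. Then $\Lambda'$ is a lattice of full rank $m$ in $V^\perp$, and the short exact sequence $0 \to \Lambda \to \mathbb{Z}^n \to \Lambda' \to 0$ shows $\det(\Lambda) \cdot \det(\Lambda') = 1$. The restriction $M|_{V^\perp} : V^\perp \to \mathbb{R}^m$ is a linear isomorphism whose Gram determinant is $\det(MM^T)$, so $\det(M(\mathbb{Z}^n)) = \det(M(\Lambda')) = \sqrt{\det(MM^T)} \cdot \det(\Lambda')$. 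Finally, by reducing $M$ to Smith normal form, $\det(M(\mathbb{Z}^n))$ as a sublattice of $\mathbb{Z}^m$ equals the product $d_1 \cdots d_m$ of the invariant factors, which is precisely $D$ (the determinantal divisor). Combining these three identities yields $\det(\Lambda) = \sqrt{\det(MM^T)}/D$.

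The second step is to apply Minkowski's second convex body theorem to $\Lambda$ with the symmetric convex body $K := [-1,1]^n \cap V$. A vector $X \in V$ lies in $\lambda K$ if and only if $\|X\|_\infty \leqslant \lambda$, so by definition of successive minima there exist linearly independent $X_1,\dots,X_{n-m} \in \Lambda$ with $\|X_j\|_\infty \leqslant \lambda_j$. Minkowski's second theorem gives the inequality $\lambda_1 \cdots \lambda_{n-m} \cdot \vol_{n-m}(K) \leqslant 2^{n-m} \det(\Lambda)$. The main technical obstacle, and what distinguishes the Bombieri--Vaaler bound from the classical Siegel's lemma (Lemma \ref{SL}), is to lower bound the volume of this slice of the cube. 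For this I would invoke Vaaler's cube slicing inequality, which states that every central $k$-dimensional section of $[-1,1]^n$ has $k$-dimensional volume at least $2^k$. Applying it yields $\vol_{n-m}(K) \geqslant 2^{n-m}$, and therefore $\prod_j \|X_j\|_\infty \leqslant \prod_j \lambda_j \leqslant \det(\Lambda) = D^{-1}\sqrt{\det(MM^T)}$, as required.
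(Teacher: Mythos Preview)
Your outline is correct and is essentially the Bombieri--Vaaler argument. Note, however, that the paper does not prove this lemma at all: it is quoted directly as Theorem~2 of \cite{BV83} and used as a black box (only to derive the cruder Corollary~\ref{Corollary: cor to Siegel}). So there is no ``paper's own proof'' to compare against; you have simply reproduced a sketch of the original source, with the two key ingredients being the covolume identity $\det(\ker M \cap \mathbb{Z}^n) = D^{-1}\sqrt{\det(MM^T)}$ and Vaaler's cube-slicing inequality feeding into Minkowski's second theorem.
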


\begin{Corollary}
\label{Corollary: cor to Siegel}
With $n \geqslant m$ let $M$ be an $m$-by-$n$ matrix with integer entries. Let $K$ be the maximum of the absolute values of the entries of $M$. Then the equation $MX = 0$ has $n-m$ linearly independent integer solutions $X_j = (x_{j,1}, \cdots , x_{j,n}) \in \mathbb{Z}^n$ such that  \[ \prod\limits_{j=1}^{n-m}\Vert X_j\Vert_\infty \leqslant (m!)^{1/2} n^{m/2} K^m. \]
\end{Corollary}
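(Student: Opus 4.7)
The plan is to derive this corollary as a direct consequence of Lemma \ref{Lemma: Siegel's lemma} by showing that both factors on the right-hand side of Bombieri--Vaaler's bound can be controlled using only the entrywise size $K$ and the shape $(m,n)$ of $M$. The key identity will be the Cauchy--Binet formula
\[
\det(MM^T) = \sum_{\substack{I \subset \{1,\dots,n\} \\ |I| = m}} (\det M_I)^2,
\]
where $M_I$ is the $m$-by-$m$ submatrix of $M$ indexed by $I$; this turns the task of bounding $\det(MM^T)$ into the routine business of bounding one $m$-by-$m$ integer determinant.

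First I would reduce to the case $\rank M = m$. If $\rank M = r < m$, pick $r$ linearly independent rows, call the resulting $r$-by-$n$ matrix $M'$, and apply the corollary (inductively or directly) to $M'$ to obtain $n - r \geqslant n - m$ linearly independent integer solutions of $MX = 0$. Since each such $X_j$ has $\Vert X_j\Vert_\infty \geqslant 1$ (as a nonzero integer vector), selecting any $n-m$ of them only makes the product smaller, and the corresponding bound for $M'$ (with $r$ in place of $m$) is no larger than the one claimed for $M$, so the full-rank case suffices.

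Assume then $\rank M = m$. Apply Lemma \ref{Lemma: Siegel's lemma} to get $n - m$ linearly independent integer solutions $X_1,\dots,X_{n-m}$ with $\prod_j \Vert X_j\Vert_\infty \leqslant D^{-1}\sqrt{\det(MM^T)}$. Since $D$ is a positive integer, $D^{-1} \leqslant 1$. To bound $\det(MM^T)$, use Cauchy--Binet together with the Leibniz-formula bound $|\det M_I| \leqslant m!\,K^m$ for every $m$-by-$m$ minor, giving
\[
\det(MM^T) \;\leqslant\; \binom{n}{m}(m!)^2 K^{2m} \;\leqslant\; \frac{n^m}{m!}\,(m!)^2 K^{2m} \;=\; m!\,n^m K^{2m}.
\]
Taking square roots yields $\prod_j \Vert X_j\Vert_\infty \leqslant (m!)^{1/2} n^{m/2} K^m$, as required.

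There is no real obstacle here; the whole argument is a one-shot application of Siegel's lemma combined with Cauchy--Binet and Hadamard-type determinant bounds. The only subtlety worth flagging is the reduction to full row rank, since otherwise $D = 0$ and the Bombieri--Vaaler bound is vacuous; this is handled painlessly by restricting to a maximal linearly independent set of rows.
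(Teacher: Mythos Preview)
Your proof is correct and follows essentially the same approach as the paper: apply Lemma~\ref{Lemma: Siegel's lemma}, use $D \geqslant 1$, and bound $\det(MM^T)$ by $m!\,n^m K^{2m}$. The only difference is cosmetic: the paper bounds $\det(MM^T)$ directly by noting that the entries of $MM^T$ are at most $nK^2$ in absolute value and applying the Leibniz bound to that $m$-by-$m$ matrix, whereas you go via Cauchy--Binet; both routes land on the identical bound, and your explicit reduction to the full-rank case is a point the paper leaves implicit.
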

\begin{proof}
In Lemma \ref{Lemma: Siegel's lemma} we have $D \geqslant 1$ and, since the coefficients of $MM^T$ are at most $n K^2$ in absolute value, we have $\det(MM^T) \leqslant m!(nK^2)^m$. 
\end{proof}

In our application, Lemma \ref{Lemma:distancefromboundary} will be combined with the following result. This uses Siegel's lemma to construct normal vectors to separating hyperplanes of $C_A$. 

\begin{Lemma}[Finding a close point on the boundary]
\label{Lemma structure of boundary}
Let $A \subset \mathbb{Z}^d$ with $0 \in A$, $\vert A\vert = \ell\geqslant 2$ and $r = \dim \spn(A)$. Let $x \in C_A$, and suppose that there is some $y\in \spn(A) \setminus C_A$ for which $\Vert x-y\Vert_{\infty} \leqslant D$. Then there are $r-1$ linearly independent vectors $\{a_1,\dots,a_{r-1}\} \subset A$, a vector $z \in \spn(\{a_1,\dots,a_{r-1}\})$ for which $\Vert x - z\Vert_{\infty} \leqslant D$,
and a vector $v \in \mathbb{Z}^d \cap \spn(A) \cap \spn(\{a_1,\dots,a_{r-1}\})^{\perp}$ for which

\begin{enumerate} 
\item $\Vert v\Vert_{\infty} \leqslant d^{2d^2} w(A)^{d^2}$;
\item $\langle v,w\rangle \geqslant 0$ for all $w \in C_A$;
\item $\langle v,w\rangle >0$ for all $w \in C_A \setminus \spn(\{a_1,\dots,a_{r-1}\})$. 
\end{enumerate}
\end{Lemma}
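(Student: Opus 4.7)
The plan is to locate a boundary point $z$ of $C_A$ close to $x$, identify a facet of $C_A$ containing $z$, select $r-1$ linearly independent vectors of $A$ spanning that facet, and then construct an integer normal vector $v$ via Siegel's lemma.

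First I would extract the point $z$. Since $x\in C_A$, $y\in\spn(A)\setminus C_A$, and $C_A$ is closed, the segment $[x,y]$ first exits $C_A$ at some point $z=x+t(y-x)$ lying on the relative boundary of $C_A$ inside $\spn(A)$, and automatically $\|x-z\|_\infty\leq\|x-y\|_\infty\leq D$. Since $C_A$ is a polyhedral cone of dimension $r$ generated by the finite integer set $A$, its relative boundary decomposes as a finite union of facets; each facet $F$ is an $(r-1)$-dimensional polyhedral cone contained in $H_F\cap\spn(A)$ for some linear hyperplane $H_F$ that is a supporting hyperplane of $C_A$, and $F$ is generated as a cone by $A\cap H_F$. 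These standard facts are collected in the appendix on convex polytopes. Choose any facet $F$ containing $z$; since $A\cap H_F$ spans the $(r-1)$-dimensional space $H_F$, we can extract $r-1$ linearly independent vectors $a_1,\dots,a_{r-1}$ from $A\cap H_F$. Then $\spn(a_1,\dots,a_{r-1})=H_F$, and $z\in F\subset H_F=\spn(a_1,\dots,a_{r-1})$, as required.

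Next I would construct $v$. Choose $a_r\in A$ so that $\{a_1,\dots,a_r\}$ is a basis of $\spn(A)$, which is possible because $A$ spans $\spn(A)$. Parametrise $v=\sum_{j=1}^r c_j a_j$ with $c_j\in\mathbb{Z}$. The conditions $\langle v,a_i\rangle=0$ for $1\leq i\leq r-1$ form the linear system $Gc=0$ where $G\in\mathbb{Z}^{(r-1)\times r}$ has entries $G_{ij}=\langle a_i,a_j\rangle$, each of absolute value at most $d\,w(A)^2$ (as every coordinate of every $a_j$ has absolute value at most $w(A)$). Its rank is $r-1$, since the Gram matrix of the linearly independent family $\{a_1,\dots,a_r\}$ is nonsingular and $G$ consists of its first $r-1$ rows. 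Corollary \ref{Corollary: cor to Siegel} then yields a nonzero integer solution $c$ with $\|c\|_\infty\leq ((r-1)!)^{1/2} r^{(r-1)/2} (d\,w(A)^2)^{r-1}$. Setting $v=\sum_j c_j a_j$ produces a nonzero vector in $\mathbb{Z}^d\cap\spn(A)$ orthogonal to each $a_i$, with $\|v\|_\infty\leq r\,w(A)\,\|c\|_\infty$; the resulting estimate $r^r d^{r-1} w(A)^{2r-1}\leq d^{2d-1} w(A)^{2d-1}$ is comfortably below $d^{2d^2} w(A)^{d^2}$ for $d\geq 2$, while $d=1$ is trivial (the conditions on $v$ become vacuous).

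Finally I would verify properties (2) and (3). By construction $v$ is a nonzero normal to $H_F$ inside $\spn(A)$. Since $H_F$ is a supporting hyperplane of $C_A$, all of $C_A$ lies in one closed half-space of $H_F\cap\spn(A)$; after replacing $v$ by $-v$ if necessary, we secure $\langle v,w\rangle\geq 0$ for every $w\in C_A$, which is (2). For (3), if $w\in C_A$ and $\langle v,w\rangle=0$ then $w\in\spn(v)^\perp\cap\spn(A)=H_F=\spn(a_1,\dots,a_{r-1})$; contrapositively, every $w\in C_A\setminus\spn(a_1,\dots,a_{r-1})$ satisfies $\langle v,w\rangle>0$. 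I expect the main obstacle to be the polyhedral-geometry bookkeeping in the first step, namely cleanly verifying that every point on the relative boundary of $C_A$ lies on a facet generated by $A\cap H_F$ for some linear supporting hyperplane $H_F$, and that when $r<d$ one must work with the relative boundary inside $\spn(A)$ rather than the topological boundary of $C_A$ in $\mathbb{R}^d$. The quantitative linear algebra via Siegel's lemma is routine, and our explicit bound on $\|v\|_\infty$ turns out to be much stronger than the generous $d^{2d^2} w(A)^{d^2}$ demanded.
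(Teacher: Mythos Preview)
Your proposal is correct; the geometric first step (taking the point $z$ on the segment $[x,y]$ where it exits $C_A$, then locating a facet through $z$) is essentially the same as the paper's, though the paper applies an explicit linear isomorphism $f$ sending $\spn(A)$ onto $\mathbb{R}^r\times\{0\}^{d-r}$ in order to invoke the appendix lemma on full-dimensional polytopes, whereas you work directly with the relative boundary.

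The genuine difference is in the construction of the normal vector $v$. The paper proceeds in two stages: it first applies Siegel's lemma to an $r$-by-$d$ matrix of basis elements of $\spn(A)$ to produce an integer basis $X_1,\dots,X_{d-r}$ of $\spn(A)^\perp$ with $\|X_i\|_\infty\leqslant d^d w(A)^d$, and then applies Siegel again to the $(d-1)$-by-$d$ matrix with rows $a_1,\dots,a_{r-1},X_1,\dots,X_{d-r}$, obtaining $v$ with $\|v\|_\infty\leqslant d^d(d^d w(A)^d)^d\leqslant d^{2d^2}w(A)^{d^2}$. Your device of writing $v=\sum_{j\leqslant r}c_j a_j$ with $c\in\mathbb{Z}^r$ forces $v\in\mathbb{Z}^d\cap\spn(A)$ automatically, so a single application of Siegel to the $(r-1)\times r$ Gram-type matrix suffices and gives the much stronger bound $\|v\|_\infty\leqslant d^{2d-1}w(A)^{2d-1}$. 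Both are valid; yours is more economical, while the paper's two-step route has the minor advantage that it never needs to introduce the auxiliary vector $a_r$.
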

\begin{proof}
Since $C_A$ is convex, we know there is some maximal $\rho \in (0,1)$ for which \[z: = x  + \rho(y-x) \in C_A.\] Certainly $\Vert x-z\Vert_{\infty} \leqslant D$. 

To prove the other properties, let $f: \mathbb{R}^d \longrightarrow \mathbb{R}^d$ be some linear isomorphism for which $f(\spn(A)) = \mathbb{R}^r \times \{0\}^{d-r}$. Letting $A^\prime = f(A)$ and $z^\prime = f(z)$, we also have $f(C_A) = C_{A^\prime}$. Abusing notation to neglect the final $d-r$ coordinates, we have $z^\prime \in \partial(C_{A^\prime})$ (since every neighbourhood of $z$ contains a point in $\spn(A) \setminus C_A$). The structure of $\partial(C_{A^\prime})$ is well-understood from the theory of convex polytopes, which we recall in Appendix \ref{Appendix convex sets} below. Indeed, by Lemma \ref{Lemma structure of convex hull} there is some non-zero linear map $\alpha: \mathbb{R}^r \longrightarrow \mathbb{R}$ for which $z^\prime \in \ker \alpha$ and $\alpha(a^\prime) \geqslant 0$ for all $a^\prime \in A^\prime$. Furthermore, $\ker \alpha$ is spanned by some linearly independent set $A^{\prime\prime} \subset A^\prime$ with $\vert A^{\prime\prime}\vert = r-1$. Letting $\{a_1,\dots,a_{r-1}\} = f^{-1}(A^{\prime\prime})$, we have $z \in \spn(\{a_1,\dots,a_{r-1}\})$. 

We finish by constructing $v$. By applying Corollary \ref{Corollary: cor to Siegel} to an $r$-by-$d$ matrix whose rows are element of $A$ that are a basis for $\spn(A)$, we can construct a basis $X_1,\dots,X_{d-r} \in \mathbb{Z}^d$ for $\spn(A)^{\perp}$ with $\Vert X_i\Vert_\infty \leqslant d^d w(A)^d$ for all $i$. Noting that $(\spn(A)^\perp)^\perp = \spn(A)$, we then apply Corollary \ref{Corollary: cor to Siegel} again to the $(d-1)$-by-$d$ matrix whose first $r-1$ rows consist of the vectors $a_1,\dots,a_{r-1}$ and whose final $d-r$ rows consist of the vectors $X_1,\dots,X_{d-r}$; this gives a non-zero vector $v \in \mathbb{Z}^d \cap \spn(A) \cap \spn(\{a_1,\dots,a_{r-1}\})^{\perp}$ with $\Vert v\Vert_{\infty} \leqslant d^d(d^d w(A)^d)^d \leqslant d^{2d^2} w(A)^{d^2}$.

Finally, let $\beta: \spn(A) \longrightarrow \mathbb{R}$ denote the linear map $w \mapsto \langle v, w\rangle$. The kernel of $\beta$ is exactly $\spn(\{a_1,\dots,a_{r-1}\})$ (since otherwise, writing $\mathbb{R}^d = \spn(A) \oplus \spn(A)^{\perp}$, we would get that all of $\mathbb{R}^d$ is orthogonal to $v$). Since the map $\beta_f: \mathbb{R}^{r} \longrightarrow \mathbb{R}$ given by $\beta_f(w^\prime) = \beta(f^{-1}(w^\prime))$ is a linear map with $\ker \beta_f = \ker \alpha$, we conclude that $\beta_f = \lambda \alpha$ for some non-zero $\lambda \in \mathbb{R}$. By replacing $v$ by $-v$ if necessary, we may assume that $\lambda >0$. Therefore $\beta_f(w^\prime) \geqslant 0$ for all $w^\prime \in C_{A^\prime}$, and hence $\beta(w) \geqslant 0$ for all $w \in C_A$, as desired.
\end{proof}

The next result deals with the $B = \emptyset$ case of Lemma \ref{Lemma: controlling absolute B minimal}. It is a generalisation to arbitrary dimension of a trivial observation from the one dimensional case, namely that if $A \subset \mathbb{Z}_{ \geqslant 0}$ with $\min A = 0$, and if $v \in \mathcal{P}(A)$, then $v \in NA$ for all $N \geqslant v$.

\begin{Lemma}[Controlling small elements]
\label{Lemma: controlling small elements}
Let $A \subset \mathbb{Z}^d$ with $\vert A\vert =\ell \geqslant 2$ and $0 \in \ex(H(A))$. If $v \in \mathcal{P}(A) \setminus \{0\}$ and $N \geqslant 2d^{11d^3} \ell^{d} w(A)^{5d^3}\Vert v\Vert_{\infty}$ then $v \in NA$. 
\end{Lemma}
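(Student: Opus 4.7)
The plan is to reduce the claim to constructing an integer linear functional $\alpha\in\mathbb{Z}^d$ with $\langle\alpha,a\rangle\geqslant 1$ for every $a\in A\setminus\{0\}$ and $\|\alpha\|_\infty$ bounded polynomially in $d$ and $w(A)$. Because $\ex(H(A))\subset A$ we have $0\in A$, so any minimal representation $v=\sum_a m_a a$ with $\sum_a m_a = N_A(v)$ may be taken with $m_0=0$, and can then be padded with zeros to give $v\in NA$ for every $N\geqslant N_A(v)$. Granted $\alpha$, the chain
\[
N_A(v)=\sum_{a\neq 0} m_a \;\leqslant\; \sum_{a\neq 0} m_a\,\alpha(a) \;=\; \alpha(v) \;\leqslant\; d\,\|\alpha\|_\infty\,\|v\|_\infty
\]
yields a bound of precisely the required shape.

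A real-valued functional positive on $A\setminus\{0\}$ exists because $0\in\ex(H(A))$ forces the cone $C_A$ to be pointed. For the quantitative version I would iterate Lemma \ref{Lemma structure of boundary} along a decreasing flag of subspaces. Set $F_0:=\spn(A)$ (of dimension $r$) and $A^{(1)}:=A$. At step $k=1,\dots,r$, apply Lemma \ref{Lemma structure of boundary} to $A^{(k)}:=A\cap F_{k-1}$ with $x=0$ and $y=-a_*^{(k)}$ for any fixed $a_*^{(k)}\in A^{(k)}\setminus\{0\}$; this is legitimate because $0\in\ex(H(A^{(k)}))$ (inherited by restricting the supporting hyperplane at $0$ to $F_{k-1}$, using $H(A)\cap F_{k-1}=H(A\cap F_{k-1})$ from convex polytope theory), so $C_{A^{(k)}}$ is pointed and $-a_*^{(k)}\notin C_{A^{(k)}}$. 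The lemma produces a vector $v_k\in\mathbb{Z}^d\cap F_{k-1}$ with $\|v_k\|_\infty\leqslant d^{2d^2}w(A)^{d^2}$ together with a hyperplane $F_k\subsetneq F_{k-1}$ of dimension $r-k$, spanned by elements of $A^{(k)}$, such that $\langle v_k,\cdot\rangle\geqslant 0$ on $C_{A^{(k)}}$ with strict inequality off $F_k$. The iteration terminates after $r$ steps, at which point $F_r=\{0\}$.

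Combine the $v_k$'s via $\alpha:=\sum_{k=1}^{r}\lambda_k v_k$ with geometrically growing $\lambda_k:=(4rC)^{r-k}\in\mathbb{Z}_{>0}$, where $C:=d^{2d^2+1}w(A)^{d^2+1}$ dominates every $|\langle v_k,a\rangle|\leqslant d\|v_k\|_\infty\|a\|_\infty$ for $a\in A$. For each $a\in A\setminus\{0\}$, let $j=j(a)$ be the unique step at which $a$ leaves the flag, i.e.\ $a\in F_{j-1}\setminus F_j$. Then $\langle v_k,a\rangle=0$ for $k<j$ (since $a\in A^{(k)}\cap F_k$), $\langle v_j,a\rangle\geqslant 1$ (a positive integer), and $|\langle v_k,a\rangle|\leqslant C$ for $k>j$. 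The geometric gap forces
\[
\langle\alpha,a\rangle \;\geqslant\; \lambda_j - C\sum_{k>j}\lambda_k \;\geqslant\; \tfrac{3}{4}(4rC)^{r-j} \;\geqslant\; 1,
\]
while the crude estimate $\|\alpha\|_\infty\leqslant r\lambda_1\cdot d^{2d^2}w(A)^{d^2}$ expands to something of size $d^{O(d^3)}w(A)^{O(d^3)}$, comfortably within the target $2d^{11d^3}\ell^d w(A)^{5d^3}$.

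The main obstacle is the bookkeeping for the iteration: preserving $0\in\ex(H(A^{(k)}))$ through each restriction to $F_{k-1}$ (via the facial structure of convex polytopes handled in Appendix A), verifying that $\spn(A^{(k)})=F_{k-1}$ so the iteration really does run cleanly for $r$ steps, and tuning the growth rate of $\lambda_k$ large enough to swamp the $k>j$ error terms yet small enough that $\|\alpha\|_\infty$ stays within the stated bound.
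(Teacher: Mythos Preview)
Your approach is correct and takes a genuinely different route from the paper. Both arguments reduce to constructing an integer functional $\alpha$ with $\langle \alpha, a\rangle \geqslant 1$ for every nonzero $a \in A$, then bounding $N_A(v) \leqslant \langle \alpha, v\rangle \leqslant d\|\alpha\|_\infty \|v\|_\infty$. The paper builds $\alpha$ differently: it first performs a linear change of coordinates (via Corollary~\ref{Corollary: cor to Siegel} and Cramer's rule) to make $\spn(A)$ full-dimensional, and then takes $\alpha$ to be the sum of \emph{all} the facet normals $x_i$ of the cone $C_{A'}$, using that each nonzero $a'\in A'$ lies strictly inside at least one facet's open halfspace so that $\langle a',\sum_i x_i\rangle\geqslant 1$. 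Controlling $\|\alpha\|_\infty$ then requires a bound on the number of facets, which is where the paper invokes (a weak form of) McMullen's Upper Bound Theorem from Lemma~\ref{Lemma structure of convex hull} and picks up the factor $\ell^{d}$.

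Your flag iteration sidesteps both of these steps: you sum only $r\leqslant d$ geometrically weighted normals $v_k$ coming from repeated applications of Lemma~\ref{Lemma structure of boundary}, which already handles the non-full-dimensional case internally. This avoids the polytope facet-counting entirely and in fact yields a bound on $\|\alpha\|_\infty$ with no dependence on $\ell$, so it is quantitatively a touch sharper than what the lemma states. One small remark on the bookkeeping you flagged: the identity $H(A)\cap F_{k-1}=H(A\cap F_{k-1})$ is not what you need (and fails for generic subspaces); to preserve $0\in\ex(H(A^{(k)}))$ it is cleaner simply to restrict the original supporting functional at $0$ to $\spn(A^{(k)})$ and note it remains nonzero there since it is positive on $A^{(k)}\setminus\{0\}$. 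Your claim $\spn(A^{(k)})=F_{k-1}$ does hold, because Lemma~\ref{Lemma structure of boundary} produces $F_k$ as the span of $r_k-1$ elements of $A^{(k)}$, so the iteration runs for exactly $r$ steps as you say.
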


\begin{proof}
Suppose that $\dim \spn(A) = r$. We start by constructing a linear isomorphism $f: \mathbb{R}^d \longrightarrow \mathbb{R}^d$ for which $f(A) \subset \mathbb{Z}^r \times \{0\}^{d-r}$. Indeed, if $r=d$ there is nothing to do. Otherwise, we take some elements $a_1,\dots, a_r \in A$ which form a basis of $\spn(A)$. Then, by applying Corollary \ref{Corollary: cor to Siegel} to the $r$-by-$d$ matrix whose rows are given by the vectors $a_i$, we have vectors $v_{r+1}, \dots, v_d \in \mathbb{Z}^d$ such that $ \mathcal{B}: = \{a_1,\dots,a_r,v_{r+1},\dots,v_d\}$ is a basis for $\mathbb{R}^d$ and $\Vert v_i\Vert_{\infty} \leqslant d^d w(A)^d$ for each $i$. 

Now let $M = (\mu_{i,j})_{i,j \leqslant d}$ denote the $d$-by-$d$ matrix whose inverse $M^{-1}$ has columns given by the vectors from $\mathcal{B}$. Thus $M$ is the change of basis matrix that maps elements of $\mathcal{B}$ to the standard basis vectors of $\mathbb{R}^d$. By Cramer's rule, we see that \[ \vert \mu_{i,j}\vert \leqslant d^d (d^d w(A)^d)^d \leqslant d^{2d^2} w(A)^{d^2}.\] Furthermore, $\mu_{i,j} \in \frac{1}{D} \mathbb{Z}$ where $D \in \mathbb{Z}$ with \[\vert D\vert = \det(M^{-1}) \leqslant d^d(d^d w(A)^d)^d \leqslant d^{2d^2} w(A)^{d^2}.\] Now let $f$ be the linear map given by matrix $DM$, and let $A^\prime = f(A)$. Then $0 \in \ex(H(A^\prime))$, $A^\prime \subset \mathbb{Z}^r \times \{0\}^{d-r}$, $\spn(A^\prime) = \mathbb{R}^r \times \{0\}^{d-r}$ and 
\begin{equation}
\label{eq: changing width}
w(A^\prime) \leqslant d^{4d^2} w(A)^{2d^2 +1} \leqslant d^{4d^2} w(A)^{3d^2}.
\end{equation}
\noindent Henceforth we will abuse notation and consider $A^\prime$ as a subset of $\mathbb{Z}^r$.

We now make an appeal to facts about $C_{A^\prime}$ and $\partial(C_{A^\prime})$ which are laid out in Lemma \ref{Lemma structure of convex hull} below. In particular, we see that there is a collection of non-zero linear maps $\alpha_1,\dots,\alpha_n: \mathbb{R}^r \longrightarrow \mathbb{R}$, with $n \leqslant 2r \ell^{r/2}$, for which 
\begin{equation}
\label{eq structure of CA}
C_{A^\prime} = \cap_{i \leqslant n} \{ y \in \mathbb{R}^r: \alpha_i(y) \geqslant 0\}
\end{equation} and for which for each $i \leqslant n$ there exists a subset $A_i^\prime \subset A^\prime \cap \ker \alpha_i$ with $\vert A_i^\prime\vert = r-1$ and $\ker \alpha_i = \spn(A_i^\prime)$. Therefore, using Corollary \ref{Corollary: cor to Siegel} on the $(r-1)$-by-$r$ matrix with rows given by the elements of $A_i^\prime$, without loss of generality we may assume the following: for all $i \leqslant n$, there exists a vector $x_i \in \mathbb{Z}^r \setminus \{0\}$ with $\Vert x_i\Vert_{\infty} \leqslant r^r w(A^\prime)^r$ such that for all $y \in \mathbb{R}^r$ we have $\alpha_i(y) = \langle x_i,y \rangle$. Indeed, by directly applying Corollary \ref{Corollary: cor to Siegel} we find a $z_i \in \mathbb{Z}^r \setminus \{0\}$ with $\Vert z_i\Vert_{\infty} \leqslant r^r w(A^{\prime})^r$ that is orthogonal to $\ker \alpha_i$. Hence there is some $c_i \in \mathbb{R} \setminus \{0\}$ for which $\alpha_i(y) = c_i \langle z_i,y \rangle$ for all $y \in \mathbb{R}^r$. Then $\vert c_i\vert^{-1} \alpha_i (y) = \langle \sign(c_i)z_i, y \rangle$, and without loss of generality we may rename $\vert c_i\vert^{-1} \alpha_i(y)$ as $\alpha_i(y)$ (as this preserves $C_{A^{\prime}}$) and define $x_i:=\sign(c_i)z_i$. 

We claim that for each $a^\prime \in A^\prime \setminus \{0\}$ there exists $i \leqslant n$ for which $\langle x_i, a^\prime \rangle >0$. Indeed, suppose for contradiction that there were some $a^\prime \in A^\prime \setminus \{0\}$ for which $\alpha_i(a^\prime) = 0$ for all $i$. By \eqref{eq structure of CA}, this would mean that $\lambda a^\prime \in C_{A^\prime}$ for all $\lambda \in \mathbb{R}$. Yet $0 \in \ex(H(A^\prime))$, which means that there is a non-zero linear map $\beta: \mathbb{R}^r \longrightarrow \mathbb{R}$ for which $\beta(y) >0$ for all $y \in C_{A^\prime} \setminus \{0\}$. Taking $\lambda = \pm 1$ we would have both $\beta(a^\prime) >0 $ and $\beta(-a^\prime) >0$, which gives the contradiction. Therefore for each $a^\prime \in A^\prime \setminus \{0\}$ we have $\langle a^\prime, \sum_{i \leqslant n} x_i \rangle >0$, and since these are both integer vectors we have $\langle a^\prime, \sum_{i \leqslant n} x_i \rangle \geqslant 1$.

Now suppose that $v \in \mathcal{P}(A) \setminus \{0\}$. Then $f(v) \in \mathcal{P}(A^\prime) \setminus \{0\}$. Writing \[ f(v) = a_1^\prime + \cdots + a_N^\prime\] with $a_i^\prime \in A^\prime \setminus \{0\}$, we get the inequality \[ N \leqslant \sum\limits_{j\leqslant N} \sum\limits_{i \leqslant n} \langle a_j^\prime, x_{i} \rangle = \langle f(v), \sum\limits_{i \leqslant n}  x_{i}\rangle \leqslant d\Vert f(v)\Vert_\infty \Big(\sum\limits_{i \leqslant n} \Vert x_i\Vert_\infty\Big) \leqslant \Vert f(v)\Vert_\infty 2 \ell^{r/2} r^{r+2}w(A^\prime)^r. \] Since $\Vert f(v)\Vert_\infty \leqslant d^{4d^2} w(A)^{2d^2} \Vert v\Vert_\infty$, by using the bound on $w(A^\prime)$ from \eqref{eq: changing width} we derive \[ N \leqslant \Vert v\Vert_\infty 2d^{4d^2 + 4rd^2} w(A)^{2d^2 + 3rd^2} \ell^{r/2} r^{r+2} \leqslant \Vert v\Vert_\infty 2d^{11d^3} \ell^{d} w(A)^{5d^3}. \] Writing $v = \sum_{j \leqslant N}f^{-1}(a_j^\prime)$, we have $v \in NA$ as claimed. 
\end{proof}

This completes all the necessary preparation for the first phase of the induction step.\\

\noindent \textbf{Phase 2: Quantitative details}. We will prove the following. 

\begin{Lemma}[Intersecting cones]
\label{Lemma: intersecting cones}
Let $d,d_1,d_2 \in \mathbb{Z}$, with $d \geqslant 1$ and $0 \leqslant d_1,d_2 \leqslant d$. Let $B_1,B_2 \subset \mathbb{Z}^d$ be finite sets with $\vert B_i\vert = d_i$ for each $i$, and assume that $B_1$ is linearly independent and $B_2$ is linearly independent. Let $\max_{b \in B_1 \cup B_2} \Vert b\Vert_\infty \leqslant K$ (where $K \geqslant 1$). Let $x \in \mathbb{R}^d$ and suppose $\dist(x,C_{B_1}) \leqslant X_1$ and $\dist(x,C_{B_2}) \leqslant X_2$. Then \[ \dist(x,C_{B_1} \cap C_{B_2}) \leqslant (X_1 + X_2)2^{2d} d^{10d^5}
 K^{4d^5}.\]
\end{Lemma}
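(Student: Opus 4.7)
The plan is to transcribe the geometric problem into a linear algebra problem. Pick $y_1 \in C_{B_1}$ and $y_2 \in C_{B_2}$ realising the given distance bounds, so that $\Vert y_1 - y_2\Vert_\infty \leqslant X_1 + X_2$. Since each $B_i$ is linearly independent, we may write $y_i = \sum_{b \in B_i} \lambda^{(i)}_b b$ uniquely with $\lambda^{(i)}_b \geqslant 0$. Let $M$ be the $d \times (d_1+d_2)$ integer matrix whose columns are the elements of $B_1$ followed by the negatives of the elements of $B_2$; its entries have absolute value at most $K$. Setting $v := (\lambda^{(1)}, \lambda^{(2)}) \in \mathbb{R}^{d_1+d_2}_{\geqslant 0}$ and $e := y_1 - y_2$, we have $M v = e$ with $\Vert e\Vert_\infty \leqslant X_1 + X_2$.

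A point $z \in C_{B_1} \cap C_{B_2}$ close to $x$ may be constructed from a nonnegative $v' \in \ker M$ by setting $z := B_1 (v')^{(1)} = B_2 (v')^{(2)}$, where $(\cdot)^{(i)}$ extracts the coordinates indexed by $B_i$. Then $\Vert y_1 - z\Vert_\infty = \Vert B_1(\lambda^{(1)} - (v')^{(1)})\Vert_\infty$, so it is enough to find such a $v'$ for which this last quantity is small. Writing $\delta := v - v'$, we equivalently seek $\delta \in \mathbb{R}^{d_1+d_2}$ satisfying $M\delta = e$, componentwise bounded above by $v$, and with $\Vert B_1 \delta^{(1)}\Vert_\infty$ small.

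My first step would be to apply Cramer's rule to a maximal invertible square submatrix of $M$, producing a particular solution $\delta_0$ of $M\delta = e$ with $\Vert \delta_0\Vert_\infty \leqslant d!\, K^{d-1} (X_1+X_2) \leqslant d^d K^d (X_1 + X_2)$; consequently $\Vert B_1 \delta_0^{(1)}\Vert_\infty \leqslant d^{d+1} K^{d+1}(X_1 + X_2)$. If $\delta_0 \leqslant v$ componentwise we are done, taking $v' := v - \delta_0$. In the contrary case, some coordinate $v_b$ is itself bounded by $\Vert\delta_0\Vert_\infty$, so the corresponding term $\lambda_b b$ contributes at most $d^d K^{d+1}(X_1+X_2)$ to $y_1$ (or to $y_2$). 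Deleting $b$ from the appropriate $B_i$ and replacing $y_i$ by $y_i - \lambda_b b \in C_{B_i\setminus\{b\}}$ reduces $d_1+d_2$ by one at the cost of an error-amplification polynomial in $d$ and $K$; we then iterate on the pair of smaller linearly independent sets.

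The main obstacle is controlling the compounding of this error through the recursion. Since $d_1 + d_2 \leqslant 2d$, the induction terminates within $2d$ steps, bottoming out either when every coordinate obstruction is resolved or when one of the sets is emptied (in which case $C_{B_i} = \{0\}$ forces $x$ itself to be small and the conclusion is immediate). To reach the claimed bound $(X_1+X_2)\,2^{2d} d^{10d^5} K^{4d^5}$ one needs more than a single application of Cramer: each step of the induction will also require constructing well-bounded integer bases for auxiliary subspaces such as $\ker M$, $\spn(B_1) \cap \spn(B_2)$, and their relevant complements, for which the sharper Bombieri--Vaaler form (Corollary~\ref{Corollary: cor to Siegel}) rather than the elementary Lemma~\ref{SL} is needed to keep the exponents manageable. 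Tracking how these polynomial-in-$(d,K)$ factors compound geometrically across the $O(d)$ iterations --- and in particular the fact that projections between non-orthogonal integer subspaces can amplify norms by $K^{d^{O(1)}}$ --- should deliver the stated estimate, with the moderate polynomial $d^5$ in the exponent arising naturally from the nested Siegel applications.
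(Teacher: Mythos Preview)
Your plan is sound and in fact takes a more direct route than the paper. Both arguments proceed by induction on $d_1+d_2$, reducing to a smaller pair of cones whenever the immediate construction fails, but the way the candidate point in $C_{B_1}\cap C_{B_2}$ is produced differs. The paper first builds a bespoke basis of $\mathbb{R}^d$ adapted simultaneously to $\spn(B_1)$ and $\spn(B_2)$ (Lemma~\ref{Lemma intersection basis}, assembled via iterated applications of Corollary~\ref{Corollary: cor to Siegel}), uses it to project $y_1$ onto $\spn(B_1)\cap\spn(B_2)$ to obtain a point $y_3$, and then tests whether $y_3$ lies in the cone intersection; if not, some $B_1$-coefficient of $y_1$ is forced small and one deletes. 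You bypass the basis construction entirely: you solve $M\delta_0=e$ directly by Cramer on an integer square submatrix (with $|\det|\geqslant 1$), and either $v'=v-\delta_0$ is nonnegative, giving a point of $C_{B_1}\cap C_{B_2}$ immediately, or some coordinate $v_b$ is forced below $\Vert\delta_0\Vert_\infty$, enabling deletion.

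Your final paragraph is too pessimistic. You do \emph{not} need Siegel-type bases for $\ker M$ or $\spn(B_1)\cap\spn(B_2)$: the Cramer step alone suffices at every stage, since after deleting a column the remaining matrix still has integer entries bounded by $K$. Tracking the recursion exactly as you describe --- the quantity $X_1+X_2$ is multiplied by at most $1+d^dK^d\leqslant 2d^dK^d$ at each of at most $2d$ steps, with a final overhead of at most $2d^{d+1}K^d$ from the terminal Case~1 --- yields a bound of order $2^{2d+1}d^{2d^2+d+1}K^{2d^2+d}(X_1+X_2)$, comfortably inside the stated estimate. The $d^{10d^5}K^{4d^5}$ in the paper comes precisely from the nested Siegel applications inside Lemma~\ref{Lemma intersection basis}; your argument avoids that machinery altogether, so the exponents you were worried about never materialise.
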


First we use Siegel's lemma to construct a basis of $\mathbb{R}^d$ with certain useful properties. 

\begin{Lemma}[Basis for intersections]
\label{Lemma intersection basis} 
Let $d,d_1,d_2 \in \mathbb{Z}_{> 0}$ with $d_1,d_2 \leqslant d$. Let $B_1, B_2 \subset \mathbb{Z}^d$ be finite sets with $\vert B_i \vert = d_i$ for each $i$, and assume that $B_1$ is linearly independent and $B_2$ is linearly independent, and let $n: = \dim(\spn(B_1) \cap \spn(B_2))$. Let $\max_{b \in B_1 \cup B_2} \Vert b\Vert_\infty \leqslant K$.

Then there is a basis $V = \{v_1,\dots,v_d\}$ for $\mathbb{R}^d$ such that:
\begin{enumerate}
\item $v_i \in \mathbb{Z}^d$ for all $i$;
\item $\{v_1,\dots,v_n\}$ is a basis for $\spn(B_1) \cap \spn(B_2)$;
\item $\{v_1,\dots,v_{d_1}\}$ is a basis for $\spn(B_1)$, and $\{v_{n+1},\dots v_{d_1} \} \subset B_1$;
\item $\{v_{1},\dots,v_n, v_{d_1 + 1},\dots,v_{d_1 + d_2 - n}\}$ is a basis for $\spn(B_2)$, and $\{v_{d_1 + 1}, \dots,v_{d_1 + d_2 - n}\} \subset B_2$;
\item $\Vert v_i\Vert_\infty \leqslant d^{3d^3} K^{d^3}$ for all $i$;
\end{enumerate}
\end{Lemma}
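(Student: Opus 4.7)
The plan is to construct the basis $V$ in four phases, using Siegel's lemma (Corollary~\ref{Corollary: cor to Siegel}) at two key junctures.

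First I would produce $v_1,\ldots,v_n$ as integer vectors forming a basis of $W:=\spn(B_1)\cap\spn(B_2)$. The natural way is to realise $W$ as the image of a kernel: consider the integer matrix $M$ of size $d\times(d_1+d_2)$ whose first $d_1$ columns are the elements of $B_1$ and whose last $d_2$ columns are the negatives of the elements of $B_2$. Then $\ker M$ consists of pairs $(\alpha,\beta)$ with $\sum\alpha_i b_i^{(1)}=\sum\beta_i b_i^{(2)}$, and because $B_1$ and $B_2$ are each linearly independent, the map $(\alpha,\beta)\mapsto \sum\alpha_i b_i^{(1)}$ is a bijection from $\ker M$ onto $W$, so $\dim\ker M=n$. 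To apply Siegel's lemma I would first restrict $M$ to a maximal linearly independent set of rows, producing an $(d_1+d_2-n)\times(d_1+d_2)$ matrix $M'$ with the same kernel, to which Corollary~\ref{Corollary: cor to Siegel} applies (more columns than rows). This yields $n$ linearly independent integer solutions $Y_j=(\alpha^{(j)},\beta^{(j)})$ with each $\|Y_j\|_\infty\leqslant ((d_1+d_2-n)!)^{1/2}(d_1+d_2)^{(d_1+d_2-n)/2}K^{d_1+d_2-n}\leqslant (2d)^{2d}K^{2d}$; set $v_j:=\sum_i\alpha_i^{(j)}b_i^{(1)}$, which lies in $W\cap\mathbb{Z}^d$ and satisfies $\|v_j\|_\infty\leqslant d_1 K\|Y_j\|_\infty\leqslant d(2d)^{2d}K^{2d+1}$.

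Next I would extend this to the bases of $\spn(B_1)$ and $\spn(B_2)$ by Steinitz exchange, which is the only place where properties (3) and (4) are used directly. Since $v_1,\ldots,v_n$ are linearly independent elements of $\spn(B_1)$ and $B_1$ is itself a basis of $\spn(B_1)$, one may pick $d_1-n$ elements $v_{n+1},\ldots,v_{d_1}\in B_1$ completing $\{v_1,\ldots,v_n\}$ to a basis of $\spn(B_1)$; analogously, pick $v_{d_1+1},\ldots,v_{d_1+d_2-n}\in B_2$ completing $\{v_1,\ldots,v_n\}$ to a basis of $\spn(B_2)$. These $v_i$ have $\|v_i\|_\infty\leqslant K$, so they easily fall under the desired bound. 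Together, $v_1,\ldots,v_{d_1+d_2-n}$ form a basis of $\spn(B_1)+\spn(B_2)$.

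Finally I would apply Corollary~\ref{Corollary: cor to Siegel} once more to extend to all of $\mathbb{R}^d$: form the $(d_1+d_2-n)\times d$ integer matrix $M''$ with rows $v_1,\ldots,v_{d_1+d_2-n}$, whose maximum entry is bounded by $d(2d)^{2d}K^{2d+1}$. Since the row count is at most $d$, Corollary~\ref{Corollary: cor to Siegel} furnishes $d-(d_1+d_2-n)$ linearly independent integer vectors in $\ker M''=\bigl(\spn\{v_1,\ldots,v_{d_1+d_2-n}\}\bigr)^\perp$; take these to be $v_{d_1+d_2-n+1},\ldots,v_d$. Orthogonality to the earlier $v_i$ guarantees that $V=\{v_1,\ldots,v_d\}$ is a basis of $\mathbb{R}^d$. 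Their norms are bounded by $(m!)^{1/2}d^{m/2}(d(2d)^{2d}K^{2d+1})^m$ with $m\leqslant d$, which one checks is at most $d^{3d^3}K^{d^3}$ for $d\geqslant 2$; the case $d=1$ is trivial (any nonzero element of $B_1$ works with norm $\leqslant K$).

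The main obstacle is Phase~1: producing integer generators of $\spn(B_1)\cap\spn(B_2)$ with controlled height. The elegant trick of reading off the intersection as the image of a kernel reduces this to a single invocation of Siegel's lemma; the remaining phases are essentially bookkeeping, with a second Siegel application to close out the full basis of $\mathbb{R}^d$ and routine arithmetic to verify the final height bound.
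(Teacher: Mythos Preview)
Your argument is correct and follows the same three-stage architecture as the paper (produce an integer basis for $\spn(B_1)\cap\spn(B_2)$, extend through $B_1$ and $B_2$ by Steinitz exchange, then complete to $\mathbb{R}^d$ via an orthogonal complement), but your Phase~1 is a different and somewhat slicker realisation. The paper obtains $v_1,\ldots,v_n$ via the identity $\spn(B_1)\cap\spn(B_2)=(B_1^\perp+B_2^\perp)^\perp$: it first applies Corollary~\ref{Corollary: cor to Siegel} separately to the row matrices of $B_1$ and $B_2$ to get integer bases of $B_1^\perp$ and $B_2^\perp$, selects $d-n$ of these vectors spanning $B_1^\perp+B_2^\perp$, and then applies the corollary once more to this $(d-n)\times d$ matrix. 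Your trick of reading $W$ off as the image of $\ker[B_1\mid -B_2]$ under $(\alpha,\beta)\mapsto\sum_i\alpha_ib_i^{(1)}$ collapses these three Siegel invocations into one and incidentally gives a sharper intermediate bound on $\|v_j\|_\infty$ (roughly $K^{2d+1}$ versus the paper's $K^{d^2}$); both easily land under the stated $d^{3d^3}K^{d^3}$. One small wrinkle: in your final check ``with $m\leqslant d$'' you need the observation that the case $m=d_1+d_2-n=d$ is vacuous (there are then no vectors to produce in Phase~3), since for $d=2$ and $m=2$ the $K$-exponent $(2d+1)m=10$ would exceed $d^3=8$.
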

The requirement that $\{v_{n+1},\dots,v_{d_1}\} \subset B_1$ and $\{v_{d_1 + 1},\dots,v_{d_1 + d_2 - n}\} \subset B_2$ are not vital in the application to Lemma \ref{Lemma: intersecting cones}, but will be convenient at a certain point in that proof. 
\begin{proof}
First we use Corollary \ref{Corollary: cor to Siegel} (as applied to the $d_1$-by-$d$ matrix whose rows consist of the elements of $B_1$) to construct a basis $\{X_1,\dots,X_{d-d_1}\}$ for $B_1^\perp$ consisting of vectors $X_i \in \mathbb{Z}^d$ with $\Vert X_i \Vert_\infty \leqslant (d_1!)^{1/2} d^{d/2} K^d \leqslant d^{d} K^d$. We construct a basis $\{Y_1,\dots,Y_{d-d_2}\}$ for $B_2^\perp$ in the same way.

Following this, we may construct a $(d-n)$-by-$d$ matrix $M$ whose rows are some elements of $\{X_1,\dots,X_{d-d_1},Y_1,\dots, Y_{d-d_2}\}$, where we populate the rows by choosing some $X_i$ or $Y_j$ that is not in the linear span of the rows that we have chosen so far, until we can no longer do so. By construction the rows of $M$ are a basis for $B_1^\perp + B_2^\perp$. Since $B_1^\perp + B_2^\perp = (\spn(B_1) \cap \spn(B_2))^\perp$ (by dimension counting), the rows of $M$ are also a basis for $(\spn(B_1) \cap \spn(B_2))^\perp$. Therefore applying Corollary \ref{Corollary: cor to Siegel} to the matrix $M$ we get a basis $\{v_1,\dots,v_n\}$ for $\spn(B_1) \cap \spn(B_2)$ of vectors $v_i \in \mathbb{Z}^d$ which satisfy $\Vert v_i\Vert_\infty  \leqslant (d!)^{1/2} d^{d/2} (d^d K^d)^d \leqslant d^{d^2 + d} K^{d^2}$ for each $i$. 

Now we complete $\{v_1,\dots,v_n\}$ to a basis $\{v_1,\dots,v_d\}$ for $\mathbb{R}^d$ with all the remaining properties. For $n+1 \leqslant i \leqslant d_1$, we let $v_i$ list some elements of $B_1$ that are not in $\spn(\{v_1,\dots,v_{i-1}\})$. Then for $d_1 + 1 \leqslant i \leqslant d_1 + d_2 - n$, we let $v_i$ list some elements of $B_2$ that are not in $\spn(\{v_1, \dots,v_{i-1}\})$. By dimension counting, we have that $\{v_1,\dots,v_{d_1}\}$ is a basis for $B_1$ and $\{v_1,\dots,v_n, v_{d_1 + 1}, \dots, v_{d_1 + d_2 - n}\}$ is a basis for $B_2$. We choose the remaining $ v_i$ to be integer vectors that are orthogonal to the set $\{v_j: j \leqslant d_1 + d_2 - n\}$. We can again use Corollary \ref{Corollary: cor to Siegel} to bound the norms of these $v_i$, ending up with \[ \Vert v_i\Vert_\infty \leqslant d!^{1/2} d^{d/2} (d^{d^2 + d} K^{d^2})^d \leqslant d^{d^3 + d^2 + d} K^{d^3} \leqslant d^{3d^3} K^{d^3}.\] 
This completes the lemma. 
\end{proof}

\begin{proof}[Proof of Lemma \ref{Lemma: intersecting cones}]
The proof will be by induction on $d_1 + d_2$, with the induction hypothesis being that \[ \dist(x, C_{B_1} \cap C_{B_2}) \leqslant (X_1 + X_2)2^{d_1 + d_2} d^{5(d_1 + d_2)d^4}
 K^{2(d_1 + d_2) d^4}. \] If some $d_i = 0$ then $C_{B_i} = \{0\}$ and we are done. From now on we assume that $d_1, d_2 \geqslant 1$. Since $\dist(x,C_{B_1}) \leqslant X_1$ we can write \[x = y_1 + z_1\] with $y_1 \in C_{B_1}$ and $\Vert z_1\Vert_\infty \leqslant X_1$, and similarly \[x = y_2 + z_2\] with $y_2 \in C_{B_2}$ and $\Vert z_2\Vert_\infty \leqslant X_2$. Let us emphasise that we cannot assume that $y_1,y_2 \in \mathbb{Z}^d$, nor do we currently have any control over the norms of $y_1$ or $y_2$. Both of these issues would pose difficulties were we try to induct upon the dimension $d$ by restricting to the two-dimensional subspace $\spn(\{y_1,y_2\})$. 

Let $n: = \dim(\spn(B_1) \cap \spn(B_2))$, and let $\{v_1,\dots,v_d\}$ be a basis for $\mathbb{R}^d$ that satisfies all the properties in Lemma \ref{Lemma intersection basis}. Expanding with respect to this basis, we write \[y_1 = \sum\limits_{i \leqslant n} \alpha_i v_i + \sum\limits_{n+1 \leqslant i \leqslant d_1} \beta_i v_i\] and \[y_2 = \sum\limits_{i \leqslant n} \gamma_i v_i + \sum\limits_{d_1 + 1 \leqslant i \leqslant d_1 + d_2 - n} \delta_i v_i,\] for some coefficients $\alpha_i,\beta_i, \gamma_i, \delta_i$.

We know that $\Vert y_1 - y_2\Vert_{\infty} \leqslant X_1 + X_2$, and that $\{v_1,\dots,v_d\}$ is a basis with integer coordinates and $\max_i \Vert v_i \Vert_\infty \leqslant d^{3d^3} K^{d^3}$. By Cramer's rule (or equivalently considering the change of basis matrix), we conclude that \[ \max_i (\vert \alpha _i - \gamma_i\vert, \vert \beta_i \vert, \vert \delta_i\vert) \leqslant (X_1 + X_2) d!(d^{3d^3} K^{d^3})^{d} \leqslant (X_1 + X_2) d^{4d^4} K^{d^4}.\] This implies, taking \[y_3 := \sum_{i \leqslant n} \alpha_i v_i,\] that there exists some $y_3 \in \spn(B_1) \cap \spn(B_2)$ such that 
\begin{align*}
\Vert x - y_3\Vert_\infty \leqslant \Vert z_1\Vert_\infty + \sum\limits_{n+1 \leqslant i \leqslant d_1} \vert\beta_i\vert \Vert v_i \Vert_\infty &\leqslant  (X_1 + X_2)(d^{4d^4 + 1} K^{d^4  + 1} + 1)\\
&\leqslant 2(X_1 + X_2)d^{5d^4}K^{2d^4},
\end{align*}
since $v_i \in B_1$ for all $i$ in the range $n+1 \leqslant i \leqslant d_1$. 

If $y_3 \in C_{B_1} \cap C_{B_2}$ then we are done directly from the bound on $\Vert x - y_3\Vert_\infty$. If not, let us assume without loss of generality that $y_3 \notin C_{B_1}$. The rest of the argument proceeds as follows. We know that $y_1 \in C_{B_1}$, but since $\Vert y_1 - y_3\Vert_{\infty}$ is bounded it follows that $y_1$ is nonetheless quite close to the boundary of $C_{B_1}$. Thus $y_1$ is close to $C_{B_1^\prime}$, for some $B_1^\prime \subsetneq B_1$. Hence $x$ is close to $C_{B_1^\prime}$ as well, and we may finish off by applying the induction hypothesis on the cones $C_{B_1^\prime}$ and $C_{B_2}$.

We now proceed with the details. Expanding $y_1$ in terms of the basis $B_1$, one obtains the (unique) expression \[ y_1 =\sum\limits_{b \in B_1} c_b b\] with $c_b \geqslant 0$ for all $b \in B_1$. We then claim that there must exist a set $B^\prime \subset B_1$, with $B^\prime \neq \emptyset$, for which \[c_b  \leqslant (X_1 + X_2)d^{4d^4} K^{d^4 + 1}\] for all $b \in B^\prime$. Indeed, were this not the case then $c_b > (X_1 + X_2) d^{4 d^4} K^{d^4 + 1}$ for all $b \in B_1$.  Write 
\begin{equation}
\label{eq:toexpandbothsides}
y_3 = y_1 - \sum_{ n+1 \leqslant i \leqslant d_1} \beta_i v_i
\end{equation} and recall that $\vert \beta_i\vert \leqslant (X_1 + X_2) d^{4d^4} K^{d^4}$ and $v_i \in B_1$ for each $i$ in the range $n+1 \leqslant i \leqslant d_1$. Then expand both sides of \eqref{eq:toexpandbothsides} with respect to the basis $B_1$ of $\spn(B_1)$. We get $y_3 = \sum_{b \in B_1} c^\prime_b b$, where $c^\prime_b$ is either of the form $c_b$ or $c_b - \beta_i$ for some $i$. In any case, $c^{\prime}_b \geqslant 0$ for all $b \in B_1$. So $y \in C_{B_1}$, but this is in contradiction with the earlier assumption that $y_3 \notin C_{B_1}$. 

With this set $B^\prime$, we conclude that \[ \dist(y_1, C_{B_1 \setminus B^\prime}) \leqslant (X_1 + X_2) d^{4d^4+1} K^{d^4 + 1} \leqslant (X_1 + X_2) d^{5d^4} K^{2d^4},\] and hence that \[\dist(x, C_{B_1 \setminus B^\prime}) \leqslant X_1 + (X_1 + X_2)d^{5d^4} K^{2d^4}.\] Since $\vert B_1 \setminus B^\prime \vert < d_1$, we can apply the induction hypothesis to conclude that \begin{align*}
\dist(x, C_{B_1 \setminus B^\prime} \cap C_{B_2})& \leqslant 2^{\vert B_1 \setminus B^\prime\vert + d_2}(X_1 + X_2 + (X_1 + X_2)d^{5d^4} K^{2d^4}) d^{5(\vert B_1 \setminus B^\prime\vert + d_2) d^4} K^{2(\vert B_1 \setminus B^\prime\vert + d_2) d^4}\\
&\leqslant (X_1 + X_2) 2^{d_1 + d_2}d^{5(d_1 + d_2)d^4}
 K^{2(d_1 + d_2) d^4}.
\end{align*}
Since $\dist(x, C_{B_1} \cap C_{B_2}) \leqslant \dist(x, C_{B_1 \setminus B^\prime} \cap C_{B_2})$, this closes the induction and the lemma follows. 
\end{proof}

Now let us record the precise version that we will use. 
\begin{Corollary}
\label{Corollary cone and subspace}
Let $d,d_1, d_2 \in \mathbb{Z}_{> 0}$ with $d_1,d_2 \leqslant d$, and let $K \geqslant 1$. Let $B_1 \subset \mathbb{Z}^d$ be a linearly independent set with $\vert B_1\vert = d_1$ and $\max_{b \in B} \Vert b\Vert_\infty \leqslant K$.  Let $V \leqslant \mathbb{R}^d$ be a subspace of dimension $d_2$, with a basis of $d_2$ vectors $B_2: =\{v_1,\dots,v_{d_2}\} \subset \mathbb{Z}^d \cap V$ satisfying $\Vert v_i\Vert_{\infty} \leqslant K$ for all i.

Suppose $\dist(x, C_{B_1}) \leqslant X_1$ and $\dist(x, V ) \leqslant X_2$. Then \[ \dist(x, C_{B_1} \cap V) \leqslant (X_1 + X_2) 2^{2d} d^{10 d^5} K^{4d^5}.\]
\end{Corollary}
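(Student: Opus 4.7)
The plan is to reduce Corollary \ref{Corollary cone and subspace} directly to Lemma \ref{Lemma: intersecting cones} by replacing the subspace $V$ with a single cone $C_{B_2^{\varepsilon}}$ that still lies inside $V$ and that is close to $x$. The key observation is that the subspace $V$ decomposes as a finite union of simplicial cones indexed by sign vectors: writing $\varepsilon = (\varepsilon_1,\dots,\varepsilon_{d_2}) \in \{\pm 1\}^{d_2}$ and $B_2^{\varepsilon} := \{\varepsilon_1 v_1, \dots, \varepsilon_{d_2} v_{d_2}\}$, we have $V = \bigcup_{\varepsilon} C_{B_2^{\varepsilon}}$, since every element of $V$ may be written uniquely as $\sum_i \gamma_i v_i$ and then placed in the cone corresponding to the sign pattern of its coefficients.

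First, using $\dist(x, V) \leqslant X_2$, I would pick a point $y \in V$ with $\|x - y\|_\infty \leqslant X_2$. Expanding $y$ in the basis $B_2$ and taking $\varepsilon$ to be the sign vector of the coefficients, one obtains $y \in C_{B_2^\varepsilon}$, and hence $\dist(x, C_{B_2^\varepsilon}) \leqslant X_2$. The set $B_2^\varepsilon$ is linearly independent (since $B_2$ is), contained in $\mathbb{Z}^d$, of cardinality $d_2$, and satisfies $\max_{b \in B_2^\varepsilon} \|b\|_\infty \leqslant K$, so all hypotheses of Lemma \ref{Lemma: intersecting cones} are satisfied for the pair $B_1, B_2^\varepsilon$.

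Applying that lemma yields
\[
\dist(x, C_{B_1} \cap C_{B_2^\varepsilon}) \leqslant (X_1 + X_2)\, 2^{2d} d^{10d^5} K^{4d^5}.
\]
Since $C_{B_2^\varepsilon} \subseteq V$, we have $C_{B_1} \cap C_{B_2^\varepsilon} \subseteq C_{B_1} \cap V$, and the monotonicity of $\dist$ under set inclusion gives the desired inequality
\[
\dist(x, C_{B_1} \cap V) \leqslant (X_1 + X_2)\, 2^{2d} d^{10d^5} K^{4d^5}.
\]

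There is essentially no obstacle to overcome here: the argument is a short, finite-choice reduction to the preceding lemma. The only slightly delicate point is to ensure that the chosen cone $C_{B_2^\varepsilon}$ really does come within $X_2$ of $x$, which is handled cleanly by first approximating $x$ by a point $y \in V$ and then reading off the signs of its coordinates in $B_2$. Crucially, no additional quantitative loss is incurred, so the constant in the corollary matches that of the lemma exactly — there is no need to pay a factor of $2^{d_2}$ from a union bound over sign patterns, because a single good $\varepsilon$ always exists.
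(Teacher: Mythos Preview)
Your proof is correct and essentially identical to the paper's own argument: the paper simply says ``by replacing some vectors $v_i$ with $-v_i$ as necessary we may assume that $\dist(x, C_{B_2}) \leqslant X_2$'' and then applies Lemma~\ref{Lemma: intersecting cones}, which is exactly your sign-flip reduction phrased slightly more tersely.
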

\begin{proof}
Since $\dist(x,V) \leqslant X_2$, by replacing some vectors $v_i$ with $-v_i$ as necessary we may assume that $\dist(x, C_{B_2}) \leqslant X_2$. Then apply Lemma \ref{Lemma: intersecting cones}.
\end{proof}
Having prepared both the first and second phase of the induction step, we may plough ahead and resolve Lemma \ref{Lemma: controlling absolute B minimal}. (The third phase will be dealt with in situ.) 

\begin{proof}[Proof of Lemma \ref{Lemma: controlling absolute B minimal}]
If $B = \emptyset$ then $\Vert x\Vert_{\infty} \leqslant X$ and we are done by Lemma \ref{Lemma: controlling small elements}, so we may assume that $\vert B\vert \geqslant 1$. We then proceed by induction on $r :=\dim \spn(A)$. The base case is $r=1$. For an arbitrary non-negative real $X$, suppose $x \in S_{\abs}(A,B)$ with $\dist(x,C_B) \leqslant X$. Since $r=1$, we have moreover $x \in S_{\abs}(A,B) \subset \mathcal{P}(A) \subset C_A = C_B$, and thus in fact $\dist(x,C_B) = 0$. Observe further that $\Lambda_A \cap C_A = v \mathbb{Z}_{ \geqslant 0}$ for some non-zero vector $v \in \mathbb{Z}^d$. Taking the linear map $f:\spn(A) \rightarrow \mathbb{R}$ for which $f(v) = 1$, let $A^\prime = f(A)$, $B^\prime = f(B)$, and $x^\prime =  f(x)$. Then $w(A^\prime) \leqslant w(A)$, $\Lambda_{A^\prime} = \mathbb{Z}$, $x^\prime \in S_{\abs}(A^\prime,B^\prime)$. Applying Lemma \ref{Lemma: non simplex Davenport constant lemma}, we conclude that $x \in NA$ with $N \leqslant D(\mathbb{Z}/\Lambda_{B^\prime}) \leqslant w(A^\prime) \leqslant w(A)$. This settles the base case. 

From now on, we assume that $r \geqslant 2$ and $x \neq 0$. Our first task is to find a vector $y \in \spn(A) \setminus C_A$ for which $\Vert x-y\Vert_{\infty}$ is bounded. Indeed, choosing some $b \in B$, since $x \in \mathcal{S}_{\abs}(A,B)$ we have $x-b \notin \mathcal{P}(A)$. We know that $x \in C_A$, since $\mathcal{P}(A) \subset C_A$, and so if $x-b \notin C_A$ we let $y = x-b$ and then $\Vert x-y\Vert_{\infty} \leqslant w(A)$. Otherwise $x-b \in (\Lambda_A\cap C_A) \setminus \mathcal{P}(A) = \mathcal{E}(A)$. By Lemma \ref{Lemma:distancefromboundary}, there is therefore some $y\in \spn(A)\setminus C_A$ for which \[\Vert x-b-y\Vert_\infty \leqslant   8d^d \ell^{3d} w(A)^{3d}.\] Hence, \[ \Vert x - y\Vert_{\infty} \leqslant  8d^d \ell^{3d} w(A)^{3d} + w(A) \leqslant 16d^d \ell^{3d} w(A)^{3d}.\] 

We now apply Lemma \ref{Lemma structure of boundary} to this pair $x$ and $y$. This gives a linearly independent set $A_{\bas} \subset A$, with $\vert A_{\bas}\vert = r-1$, and a vector $z \in \spn(A_{\bas})$ for which $\Vert x- z\Vert_{\infty}\leqslant 16d^d \ell^{3d} w(A)^{3d}$. In particular 
\begin{equation}
\label{eq:distxAbas}
\dist(x, \spn(A_{\bas})) \leqslant 16d^d \ell^{3d} w(A)^{3d}.
\end{equation} We also have a vector $v \in \mathbb{Z}^d \cap \spn(A) \cap (\spn(A_{\bas}))^{\perp}$ for which $\Vert v\Vert_{\infty} \leqslant d^{2d^2} w(A)^{d^2}$ and $\langle v,u \rangle \geqslant 0$ for all $u \in C_A$. 

Phase one of the induction step is complete. We now begin the second phase, in which we show that $\dist(x, C_{B^\prime})$ is bounded for some suitable $B^\prime \subset B$. Indeed, since $\dist(x,C_B) \leqslant X$, Corollary \ref{Corollary cone and subspace} implies that\[ \dist(x, C_B \cap \spn(A_{\bas})) \leqslant (16d^d \ell^{3d} w(A)^{3d} + X)2^{2d} d^{10 d^5} w(A)^{4d^5}.\] Let $B^\prime = B \cap \spn(A_{\bas})$. We then have $C_B \cap \spn(A_{\bas}) = C_{B^\prime}$. To justify this assertion, note that if $u \in C_B \cap \spn(A_{\bas})$ we have $u = \sum_{b \in B} c_b b$ for some coefficients $c_b  \geqslant 0$. But then \[0 = \langle v,u\rangle = \sum_{b \in B} c_b \langle v,b\rangle = \sum_{b \in B \setminus B^\prime} c_b \langle v, b\rangle\] since $v \in \spn(A_{\bas})^{\perp}$. As $\langle v, b \rangle >0$ for all $b \in B \setminus B^\prime$ we must have $c_b = 0$ for all $b \in B \setminus B^\prime$. Hence $y \in C_{B^\prime}$. (The reverse inclusion $C_{B^\prime} \subset C_B \cap \spn(A_{\bas})$ is immediate from definitions.) Therefore, 
\begin{equation}
\label{eq: dist x CBprime}
\dist(x, C_{B^\prime}) \leqslant (16d^d \ell^{3d} w(A)^{3d} + X)2^{2d} d^{10 d^5} w(A)^{4d^5}.
\end{equation}
 
Now we move onto the third phase of the induction step. Let $A^\prime = A \cap \spn(A_{\bas})$. We now collect a few facts about $A^\prime$ and about $x$. Firstly, if $a \in A \setminus A^\prime$ then $\langle v , a \rangle >0$, and thus $\langle v, a \rangle \geqslant 1$ as both $v$ and $a$ are in $\mathbb{Z}^d$. Next, letting $x_0$ be the orthogonal projection of $x$ onto $\spn(A_{\bas})$, we have \[ \langle v ,x \rangle = \langle v, x - x_0 \rangle \leqslant d \Vert v\Vert_{\infty}\dist(x, \spn(A_{\bas})).\] Finally, since $x \neq 0$, we may write $x = a_1 + \dots + a_N$ for some $a_i \in A \setminus (B \cup \{0\})$. Putting everything together we then have 
\begin{align}
\label{eq number of things not in the kernel}
\vert \{ i \leqslant N: a_i \in A \setminus A^\prime\} \vert \leqslant  \sum\limits_{i=1}^N \langle v, a_i \rangle &= \langle v,x\rangle\nonumber \\
& \leqslant d \Vert v\Vert_{\infty}\dist(x, \spn(A_{\bas})) \nonumber\\
& \leqslant 16d^{4d^2} \ell^{3d} w(A)^{4d^2}.
\end{align}

Now define \[x^\prime: = \sum_{i \leqslant N: a_i \in A^\prime} a_i \in \mathcal{P}(A^\prime).\] Then 
\begin{equation}
\label{dist between x and xprime}
\Vert x - x^\prime\Vert_{\infty} \leqslant 16d^{4d^2} \ell^{3d} w(A)^{4d^2 + 1} \leqslant 16d^{4d^2} \ell^{3d} w(A)^{5d^2},
\end{equation} and so 
\begin{align}
\label{eq:disttobprime}
\dist(x^\prime, C_{B^\prime}) &\leqslant \dist(x, C_{B^\prime}) + \Vert x - x^\prime\Vert_{\infty} \nonumber \\
&\leqslant (16d^d \ell^{3d} w(A)^{3d} + X)2^{2d} d^{10 d^5} w(A)^{4d^5} + 16d^{4d^2} \ell^{3d} w(A)^{5d^2} \nonumber \\
&\leqslant (X+1) 2^{7d} d^{11 d^5} \ell^{3d} w(A)^{7d^5}.
\end{align}
\noindent What's more, $x^\prime \in \mathcal{S}_{\abs}(A^\prime,B^\prime)$. Indeed, $x^\prime \in \mathcal{P}(A^\prime)$ by construction, and if $x^\prime - b^\prime \in \mathcal{P}(A^\prime)$ for some $b^\prime \in B^\prime$ then $x - b^\prime \in \mathcal{P}(A)$, in contradiction to the assumption that $x \in \mathcal{S}_{\abs}(A,B)$. 

We now apply the induction hypothesis to the sets $A^\prime$ and $B^\prime$, and to the element $x^\prime$. The hypotheses are satisfied (taking $\dist(x^\prime, C_{B^\prime})$ from \eqref{eq:disttobprime}), since $B^\prime$ is linearly independent (though possibly empty), and $0 \in \ex(H(A^\prime))$; this is since, if $V \cap \spn(A)$ is a separating hyperplane for $H(A)$ with $V \cap H(A) = \{0\}$, then $V \cap \spn(A^\prime)$ is a separating hyperplane for $H(A^\prime)$ with $V \cap H(A^\prime) = 0$. 

So, $x^\prime \in N^\prime A^\prime$ for some 
\begin{align*}
N^\prime &\leqslant ((X+1) 2^{7d} d^{11 d^5} \ell^{3d} w(A)^{7d^5} + 1) 2^{10d(r-1)} d^{11d^5 (r-1)} \ell^{3d(r-1)} w(A)^{7d^5 (r-1)}\\
&\leqslant (X+1) 2^{10dr - 3d + 1} d^{11d^5 r} \ell^{3dr} w(A)^{7d^5 r}.
\end{align*} 
\noindent Finally, adding in the contribution from \eqref{eq number of things not in the kernel} from those $a \in A \setminus A^\prime$, we deduce that $x \in N A$ for 
\begin{align*}
 N &\leqslant (X+1) 2^{10dr - 3d + 1} d^{11d^5 r} \ell^{3dr} w(A)^{7d^5 r} + 16d^{4d^2} \ell^{3d} w(A)^{4d^2} \\
 &\leqslant (X+1)   2^{10dr} d^{11d^5 r} \ell^{3dr} w(A)^{7d^5 r}
 \end{align*} as $d \geqslant 2$. This completes the induction, and the lemma is proved.
\end{proof}

So Lemma \ref{Lemma: controlling absolute B minimal} is settled, and with it our main effective structure result, Theorem \ref{thm main theorem}. \\

 \appendix
 \section{Convex sets}
 \label{Appendix convex sets}
In this appendix we collect together some standard facts about convex polytopes (i.e. convex hulls of finite subsets of Euclidean space). Our main references will be \cite{Br83} and \cite{Zi95}.

\begin{Lemma}[Extremal points]
\label{Lemma Extremal points}
Let $A  \subset \mathbb{R}^d$ be a finite set. Then $\ex(H(A)) \subset A$ and $H(A) = H(\ex(H(A))$. 
\end{Lemma}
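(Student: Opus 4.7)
The plan is to handle the two assertions separately: the first directly from the definition of $\ex$, and the second by induction on $\vert A\vert$ together with the separating hyperplane theorem.

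For $\ex(H(A)) \subset A$, suppose $p \in \ex(H(A))$ with witness $v \in \spn(A-A)\setminus\{0\}$ and $c \in \mathbb{R}$, so that $\langle v, p\rangle = c$ and $\langle v, x\rangle > c$ for every $x \in H(A)\setminus\{p\}$. Writing $p = \sum_{a \in A} c_a a$ as a convex combination and noting that $\langle v, a\rangle \geqslant c$ for every $a \in A$ with strict inequality when $a \neq p$, the equality $c = \sum_a c_a \langle v, a\rangle$ forces $c_a = 0$ for every $a \in A$ with $a \neq p$. Since $\sum_a c_a = 1$ there must exist some $a \in A$ with $c_a > 0$, and by the above it must equal $p$, giving $p \in A$.

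For $H(A) = H(\ex(H(A)))$, the inclusion $\supset$ follows from the first part. For the reverse I proceed by induction on $\vert A\vert$, the base $\vert A\vert \leqslant 2$ being direct. For the inductive step, if $A \subset \ex(H(A))$ we are done; otherwise pick $a_0 \in A \setminus \ex(H(A))$. The key claim is that $a_0 \in H(A \setminus \{a_0\})$; granting this, $H(A) = H(A \setminus \{a_0\})$ and also $\spn(A-A) = \spn((A\setminus\{a_0\}) - (A\setminus\{a_0\}))$ (since $a_0 \in H(A\setminus\{a_0\})$ lies in the affine span of $A \setminus \{a_0\}$), so $\ex(H(A)) = \ex(H(A\setminus\{a_0\}))$ and the induction hypothesis applied to $A \setminus \{a_0\}$ closes the argument.

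To verify the key claim, assume for contradiction that $a_0 \notin H(A \setminus \{a_0\})$. Since $H(A \setminus \{a_0\})$ is compact and convex, the separating hyperplane theorem in $\mathbb{R}^d$ supplies $v \in \mathbb{R}^d \setminus \{0\}$ and $c \in \mathbb{R}$ with $\langle v, a_0\rangle = c$ and $\langle v, y\rangle > c$ for every $y \in H(A \setminus \{a_0\})$. Any $x \in H(A) \setminus \{a_0\}$ takes the form $x = ta_0 + (1-t)y$ with $y \in H(A \setminus \{a_0\})$ and $t \in [0,1)$, and then $\langle v, x\rangle = tc + (1-t)\langle v, y\rangle > c$. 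Replacing $v$ by its orthogonal projection $v_\parallel$ onto $\spn(A-A)$ alters $\langle v, \cdot\rangle$ only by an additive constant on $H(A)$ (as $H(A)$ lies in a single coset of $\spn(A-A)$), so $v_\parallel$ inherits the strict separation property. Moreover $v_\parallel \neq 0$, for otherwise $\langle v, \cdot\rangle$ would be constant on $H(A)$, contradicting the strict inequality at points $x \neq a_0$. Thus $v_\parallel$ witnesses $a_0 \in \ex(H(A))$, a contradiction. The main obstacle throughout is exactly this bookkeeping: ensuring the separating functional lies in $\spn(A-A) \setminus \{0\}$ rather than in arbitrary $\mathbb{R}^d$, which the projection step handles cleanly.
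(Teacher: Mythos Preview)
Your proof is correct and fully self-contained, whereas the paper's own proof is simply a citation to \cite[Theorem~7.2]{Br83}. The substantive point of comparison is that the paper's definition of $\ex(H(A))$ is really the notion of an \emph{exposed} point relative to $\spn(A-A)$, not the usual extreme-point definition; your projection step (replacing the separating $v \in \mathbb{R}^d$ by its component $v_\parallel \in \spn(A-A)$, and checking $v_\parallel \neq 0$) is exactly what is needed to reconcile the standard strict-separation argument with this nonstandard definition, and you handle it cleanly. Your induction also correctly tracks that removing a non-extremal $a_0$ leaves both $H(A)$ and $\spn(A-A)$ unchanged, so that $\ex(H(A)) = \ex(H(A\setminus\{a_0\}))$. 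One minor edge case worth flagging: under the paper's definition a singleton $A = \{a\}$ has $\spn(A-A) = \{0\}$ and hence $\ex(H(A)) = \emptyset$, so the identity $H(A) = H(\ex(H(A)))$ literally fails there; your ``$\vert A\vert \leqslant 2$ direct'' glosses over this. This is a wrinkle in the definition rather than in your argument, and in any case your induction never actually descends to $\vert A\vert = 1$ (for $\vert A\vert = 2$ with two distinct points both are extremal and the induction halts), so the proof stands for all $\vert A\vert \geqslant 2$.
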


\begin{proof}
This is \cite[Theorem 7.2]{Br83}. 
\end{proof}

\begin{Lemma}[Structure of $H(A)$]
\label{Lemma structure of convex hull}
Let $A \subset \mathbb{R}^d$ be a finite set with $\vert A\vert = \ell$, $0 \in \ex(H(A))$, and assume that $\spn(A) = \mathbb{R}^d$. Then there is a finite collection of  maps $\{\alpha_1, \dots, \alpha_n\} \in (\mathbb{R}^d)^*$ and constants $\{c_1, \dots, c_n\} \subset \mathbb{R}_{ \leqslant 0}$ for which $n \leqslant 2d\ell^{d/2}$ and 
\begin{enumerate}
\item $H(A) = \cap_{i \leqslant n}\{x \in \mathbb{R}^d: \alpha_i(x) \geqslant c_i\}$;
\item $C_A = \cap_{i \leqslant n:\, c_i = 0} \{ x \in \mathbb{R}^d: \alpha_i(x) \geqslant 0\}$;
\item $\partial(C_A) = \cup_{i \leqslant n:\, c_i = 0} (\ker \alpha_i \cap C_A)$
\item for all $i$ such that $c_i = 0$, there exists a set $A_i \subset A \setminus \{0\}$ with $\vert A_i \vert = d-1$ and $\spn(A_i) = \ker \alpha_i$. 
\end{enumerate}
\end{Lemma}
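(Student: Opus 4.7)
The plan is to assemble the statement from the standard irredundant half-space representation of a full-dimensional convex polytope, together with Lemma \ref{Lemma Extremal points} and the McMullen Upper Bound Theorem for the numerical control on $n$.

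First I would observe that $H(A)$ is a bounded convex polytope. Since $\spn(A) = \mathbb{R}^d$ and $0 \in H(A)$, the polytope is $d$-dimensional, so by the standard duality between $V$- and $H$-representations (see \cite[Ch.~3]{Br83} or \cite[\S 2.3]{Zi95}) it admits an irredundant presentation
\[
H(A) = \bigcap_{i=1}^n \{x \in \mathbb{R}^d : \alpha_i(x) \geq c_i\},
\]
in which each hyperplane $\{\alpha_i = c_i\}$ supports a distinct facet $F_i$ of $H(A)$. This gives (1), and since $0 \in H(A)$ we have $0 = \alpha_i(0) \geq c_i$, forcing $c_i \leq 0$. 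For the facet bound, I would invoke the McMullen Upper Bound Theorem: by Lemma \ref{Lemma Extremal points} the vertex set of $H(A)$ is contained in $A$, so $H(A)$ has at most $\ell$ vertices, and any $d$-polytope with $v$ vertices has at most as many facets as the cyclic polytope $C(v,d)$, whose facet count is bounded by $2v^{\lfloor d/2\rfloor} \leq 2\ell^{d/2}$. The factor of $2d$ in the stated bound then leaves plenty of slack.

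For (4), I would apply Lemma \ref{Lemma Extremal points} a second time, now to $F_i$ itself viewed as a polytope in its affine hull: the vertices of $F_i$ are also vertices of $H(A)$, hence elements of $A$. When $c_i = 0$ the supporting hyperplane passes through the origin, so $0 \in F_i$ and the affine hull of $F_i$ is the linear subspace $\ker \alpha_i$, of dimension $d-1$. Choosing $d-1$ vertices of $F_i$ other than $0$ that are linearly independent gives the required set $A_i \subset A \setminus \{0\}$ with $\spn(A_i) = \ker \alpha_i$.

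For (2), I would write $C_A = \bigcup_{\lambda \geq 0} \lambda \cdot H(A)$ (available since $0 \in H(A)$) and verify both containments. The forward one is immediate: if $x = \lambda y$ with $\lambda \geq 0$ and $y \in H(A)$, then $\alpha_i(x) = \lambda \alpha_i(y) \geq \lambda c_i = 0$ whenever $c_i = 0$. For the reverse, given $x$ with $\alpha_i(x) \geq 0$ for every $i$ with $c_i = 0$, I shrink $x$ by a small $\lambda > 0$ so that $\alpha_i(\lambda x) \geq c_i$ holds simultaneously for all $i$ with $c_i < 0$, placing $\lambda x$ inside $H(A)$ and hence $x$ in $C_A$. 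For (3), the inclusion $\ker \alpha_i \cap C_A \subset \partial(C_A)$ is immediate since perturbing in a direction that strictly decreases $\alpha_i$ leaves $C_A$; conversely, if $x \in \partial(C_A)$ and $y_k \to x$ with $y_k \notin C_A$, then each $y_k$ violates some inequality $\alpha_{i_k}(y_k) < 0$ with $c_{i_k} = 0$, and the pigeonhole principle lets me pass to a subsequence on which $i_k$ is a fixed index $i$; taking the limit gives $\alpha_i(x) \leq 0$, which combined with $x \in C_A$ yields $\alpha_i(x) = 0$.

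The only genuinely nontrivial input is the Upper Bound Theorem, needed to produce the specific bound $2d\ell^{d/2}$; the remaining content is standard bookkeeping with the irredundant half-space representation and with Lemma \ref{Lemma Extremal points}.
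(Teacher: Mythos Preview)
Your proposal is correct and follows essentially the same approach as the paper: both invoke the irredundant half-space description of a full-dimensional polytope for (1), derive (2) by scaling (the paper writes $C_A = \bigcup_{N\geqslant 1} NH(A)$ and intersects, you shrink by a small $\lambda$---the same computation), obtain (4) from the fact that facets are polytopes whose vertices lie in $\ex(H(A))\subset A$, and bound $n$ via McMullen's Upper Bound Theorem (the paper also mentions Seidel's weaker argument as sufficient). Your treatment of (3) via a direct sequence-plus-pigeonhole argument is slightly more explicit than the paper's one-line citation to \cite{Br83}, but the content is the same.
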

\begin{proof}
Part (1) is the fundamental theorem of convex polytopes, given as \cite[Theorem 9.2]{Br83}. To prove Part (2), we note that $c_i \leqslant 0$ for all $i$, since $0 \in A \subset H(A)$. Then \[ C_A = \cup_{N \geqslant 1} NH(A) = \cup_{N \geqslant 1} \cap_{i \leqslant n} \{x \in \mathbb{R}^d: \alpha_i(x) \geqslant Nc_i\} =  \cap_{i \leqslant n:\, c_i = 0} \{x \in \mathbb{R}^d: \alpha_i(x) \geqslant 0\}\] as claimed. Part (3) follows immediately from part (2) (see \cite[Theorem 8.2 (a)]{Br83}).

For Part (4) we appeal to \cite[Theorem 8.2 (c)]{Br83}, assuming as we may that the expression $H(A) = \cap_{i \leqslant n}\{x \in \mathbb{R}^d: \alpha_i(x) \geqslant c_i\}$ is irreducible (i.e. the collection of maps $\{ \alpha_1, \dots, \alpha_n\}$ cannot be replaced with a proper subset). This result tells us that \[\{x \in \mathbb{R}^d: \alpha_i(x) = c_i\} \cap H(A)\] is a facet of $H(A)$, i.e. is a $d-1$ dimensional face. Now, if $F$ is a facet of $H(A)$, \cite[Theorem 7.2]{Br83} and \cite[Theorem 7.3]{Br83} imply that $F$ is a polytope, $F = H(\ex(F))$, and $\ex(F) \subset \ex(H(A)) \subset A$.   Therefore, if $c_i = 0$ we see that $\ker \alpha_i \subset \spn(A \cap \ker \alpha_i)$. Since every spanning set contains a basis we may find the set $A_i$ as claimed in (4). 

Regarding the claim that $n \leqslant 2 d\ell^{d/2}$, this bound follows from the celebrated Upper Bound Theorem of McMullen (\cite{Mc70}, or \cite[Theorem 8.23]{Zi95} of Ziegler's textbook), which gives a tight upper bound for the number of facets of a convex polytope. This is since the pair $(\alpha_i,c_i)$ is determined (up to scalar multiples) by the facet $F: = \{ x \in \mathbb{R}^d: \alpha_i(x) = c_i\} \cap H(A)$. However, one doesn't need the full strength of the Upper Bound Theorem to get the order-of-magnitude bound $2d \ell^{d/2}$; one could instead use the easier argument of Seidel \cite{Se95}, summarised in the remark before Section 8.5 of \cite{Zi95}. This bounds the number of facets by $2\sum_{i \leqslant d/2} (\begin{smallmatrix} \ell \\ d\end{smallmatrix})$, which is trivially at most $2d \ell^{d/2}$. 
\end{proof}

To aid the reader seeking the references in \cite{Br83}, we should say that \cite{Br83} uses the symbol $H$ differently; there, $H$ denotes a hyperplane, whereas for us $H$ is the convex hull.

 \bibliographystyle{plain}
 
 \bibliography{frobupdate}
   \end{document}